\setlist[enumerate]{label=\textnormal{\bfseries(\alph*)}, leftmargin=*, nosep, widest=a}
\def\@seccntformat#1{%
  \protect\textup{%
    \protect\@secnumfont
    \expandafter\protect\csname format#1\endcsname 
    \csname the#1\endcsname
    \protect\@secnumpunct
  }%
}
\def\equationautorefname~#1\null{%
  Equation~(#1)\null
}
\newtheoremstyle{convenientthm}%
  {3pt}
  {3pt}
  {\itshape}
  {}
  {\bfseries}
  {.}
  {.5em}
  {\thmnumber{#2 }\thmname{#1}\thmnote{. #3\addcontentsline{toc}{subsection}{\tocsubsection {}{#2}{#1. #3}}}}
\newtheoremstyle{convenientplain}%
  {3pt}
  {3pt}
  {}
  {}
  {\bfseries}
  {.}
  {.5em}
  {\thmnumber{#2 }\thmname{#1}\thmnote{. #3\addcontentsline{toc}{subsection}{\tocsubsection {}{#2}{#1. #3}}}}
\theoremstyle{convenientthm}
\newaliascnt{theorem}{subsection}
\newtheorem{theorem}[theorem]{Theorem}
\newtheorem*{theorem*}{Theorem}
\newtheorem*{lemma*}{Lemma}
\newaliascnt{lemma}{subsection}
\newtheorem{lemma}[lemma]{Lemma}
\newaliascnt{proposition}{subsection}
\newaliascnt{corollary}{subsection}
\newtheorem{corollary}[corollary]{Corollary}
\theoremstyle{convenientplain}
\newaliascnt{remark}{subsection}
\newtheorem{remark}[remark]{Remark}
\def\X{\mathfrak X}
\def\al{\alpha}
\def\be{\beta}
\def\ep{\varepsilon}
\def\ka{\kappa}
\def\la{\lambda}
\def\si{\sigma}
\def\ph{\varphi}
\def\ps{\psi}
\def\om{\omega}
\def\Ga{\Gamma}
\def\De{\Delta}
\def\Ph{\Phi}
\def\i{^{-1}}
\def\x{\times}
\def\p{\partial}
\let\on=\operatorname
\def\AMSonly#1{}
\def\Id{\on{Id}}
\def\R{\mathbb R}
\def\Adj{\on{Adj}}
\def\Tr{\on{Tr}}
\def\vol{\on{vol}}
\def\Vol{\on{Vol}}
\def\Imm{\on{Imm}}
\def\g{\bar{g}}
\def\Diff{\on{Diff}}
\def\dim{\on{dim}}
\def\Met{\operatorname{Met}}
\def\na{\nabla}
\def\Fl{\operatorname{Fl}}
\title{Fractional Sobolev metrics on spaces of immersions}
\author[M.~Bauer, P.~Harms, P.~W.~Michor]
{Martin Bauer, Philipp Harms, Peter W.~Michor}
\address{Martin Bauer: Faculty for Mathematics, Florida State University, USA}
\email{bauer@math.fsu.edu}
\address{Philipp Harms: Freiburg Institute of Advanced Studies and Faculty for Mathematics, Freiburg University, Germany}
\email{philipp.harms@stochastik.uni-freiburg.de}
\address{
Peter W. Michor: Faculty for Mathematics, University of Vienna, Austria}
\email{peter.michor@univie.ac.at}
\address{All authors: Erwin Schr\"odinger Institut, Boltzmanngasse 9, 1090 Wien, Austria}
\date{\today}
\keywords{Functional calculus, perturbation of operators, fractional Laplacians, spaces of Riemannian metrics, well-posedness of geodesic equations}
\subjclass[2010]{%
Primary 46T05; 
secondary 58E10, 
47A56. 
}
\thanks{We thank Martins Bruveris, Boris Kolev, Andreas Kriegl, Peer Kunstmann, and Lutz Weis for helpful discussions. Moreover, we gratefully acknowledge support in the form of a Research in Teams stipend of the Erwin Schr\"odinger Institute Vienna. MB was partially supported by NSF-grant 1912037 (collaborative research in connection with NSF-grant 1912030). PH was partially supported in the form of a Junior Fellowship of the Freiburg Institute of Advanced Studies.}
\begin{document}

\begin{abstract}
We prove that the geodesic equations of all Sobolev metrics of fractional order one and higher on  spaces of diffeomorphisms and, more generally, immersions are locally well posed.
This result builds on the recently established real analytic dependence of fractional Laplacians on the underlying Riemannian metric.
It extends several previous results and applies to a wide range of variational partial differential equations, including the well-known Euler--Arnold equations on diffeomorphism groups as well as the geodesic equations on spaces of manifold-valued curves and surfaces.
\end{abstract}

\maketitle

\setcounter{tocdepth}{1}
\tableofcontents

\section{Introduction}
\label{sec:introduction} 

\subsection*{Background}

Many prominent partial differential equations (PDEs) in hydrodynamics admit variational formulations as geodesic equations on an infinite-dimensional manifold of mappings. 
These include the incompressible Euler \cite{Ar1966}, 
Burgers \cite{khesin2008geometry}, 
modified Constantin--Lax--Majda \cite{constantin1985simple, wunsch2010geodesic, bauer2016geometric}, Camassa--Holm \cite{camassa1993integrable, misiolek1998shallow, kouranbaeva1999camassa}, Hunter--Saxton \cite{hunter1991dynamics, lenells2007hunter}, surface quasi-geostrophic \cite{constantin1994formation, Was2016} and Korteweg--de Vries \cite{ovsienko1987korteweg} equations of fluid dynamics as well as the governing equation of ideal magneto-hydrodynamics \cite{vishik1978analogs, marsden1984semidirect}.
This serves as a strong motivation for the study of Riemannian geometry on mapping space. 
An additional motivation stems from the field of mathematical shape analysis, which is intimately connected to diffeomorphisms groups and other infinite-dimensional mapping spaces via Grenander's pattern theory \cite{grenander1998computational, younes2010shapes} and elasticity theory \cite{srivastava-klassen-book:2016, bauer2014overview}. 
 
The variational formulations allow one to study analytical properties of the PDEs in relation to geometric properties of the underlying infinite-dimensional Riemannian manifold \cite{shnirel1987geometry, misiolek2010fredholm, bauer2012vanishing, bauer2013geodesic, bauer2018vanishing, jerrard2019vanishing}.
Most importantly, local well-posedness of the PDE, including smooth dependence on initial conditions, is closely related to smoothness of the geodesic spray on Sobolev completions of the configuration space~\cite{EM1970}. 
This has been used to show local well-posedness of PDEs in many specific examples, cf.\@ the recent overview article \cite{Kol2017}.
An extension of this successful methodology to wider classes of PDEs requires an in-depth study of smoothness properties of partial and pseudo differential operators with non-smooth coefficients such as those appearing in the geodesic spray or, more generally, in the Euler--Lagrange equations. 
This is the topic of the present paper.

\subsection*{Contribution}

This article establishes local well-posedness of the geodesic equation for fractional order Sobolev metrics on spaces of diffeomorphisms and, more generally, immersions. 
A simplified version of our main result reads as follows:

\begin{theorem*}
On the space of immersions of a closed manifold $M$ into a Riemannian manifold $(N,\g)$, 
the geodesic equation of the fractional-order Sobolev metric
\begin{align*}
G_f(h,k) 
=
\int_M \g\big((1+\De^{f^*\g})^ph,k\big)\vol^{f^*\g},
\qquad
h,k \in T_f\Imm(M,N),
\end{align*}
is locally well-posed in the sense of Hadamard for any $p\in[1,\infty)$. 
\end{theorem*}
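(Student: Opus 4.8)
The plan is to follow the Ebin--Marsden paradigm \cite{EM1970}: realize the geodesic equation as the flow of a smooth (in fact real analytic) spray on a Banach manifold of Sobolev immersions, and then read off local well-posedness from the theory of ordinary differential equations on Banach spaces. Concretely, I would fix a Sobolev exponent $s$ large enough (relative to $\dim M$ and $p$) that $H^s$-sections embed into $C^1$ and that the $H^s$-immersions $\Imm^s(M,N)$ form a smooth Banach manifold, with tangent spaces $T_f\Imm^s$ given by the $H^s$-sections of $f^*TN$. The goal is to show that the geodesic spray extends from the smooth immersions to a smooth map $T\Imm^s\to TT\Imm^s$; local existence, uniqueness, and smooth dependence on the initial data --- hence Hadamard well-posedness --- then follow from the standard ODE theorem on Banach manifolds.

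The first step is to derive the geodesic equation explicitly. Writing $L_f=(1+\De^{f^*\g})^p$ and taking the first variation of the energy $\tfrac12\int_0^1 G_f(f_t,f_t)\,dt$, one obtains an equation of the schematic form $\na_{\p_t}f_t=\tfrac12\,L_f\i H_f(f_t,f_t)$, where $\na$ is the covariant derivative along $f$ induced by $\g$ and $H_f(f_t,f_t)$ is a bilinear expression in $f_t$ collecting the three contributions to the variation: the $f$-dependence of the operator $L_f$ through the pullback metric $f^*\g$, the $f$-dependence of the volume density $\vol^{f^*\g}$, and the dependence of $\g$ on the footpoint along $f$ (which produces curvature terms of $N$). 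The spray is then the map $(f,f_t)\mapsto\big(f_t,\Ga_f(f_t,f_t)\big)$ with $\Ga_f(f_t,f_t)=\tfrac12\,L_f\i H_f(f_t,f_t)$.

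The main work, and the main obstacle, is to prove that $\Ga_f$ maps $H^s\x H^s$ to $H^s$ smoothly, i.e.\@ a ``no loss of derivatives'' estimate together with smooth dependence on the footpoint $f$. Here $L_f\i$ is an elliptic pseudodifferential operator of order $-2p$, gaining $2p$ derivatives, while the dominant piece of $H_f(f_t,f_t)$ involves the first variation $D_fL$ of the functional calculus $g\mapsto(1+\De^{g})^p$, which is a pseudodifferential operator of order $2p$; by the self-adjointness and positivity of $L_f$ in the $G$-pairing, the top-order contributions assemble so that $L_f\i$ can absorb them. The delicate point is that for non-integer $p$ --- and already in the functional-calculus representation for integer $p$ --- this first variation is genuinely nonlocal, so its order and its dependence on the non-smooth coefficient $f\in\Imm^s$ cannot be read off from a local symbol computation. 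This is exactly where I would invoke the real analytic dependence of $(1+\De^{g})^p$ on the Riemannian metric $g$: it furnishes $D_fL$ as a bounded family of order-$2p$ operators depending real-analytically on $f\in\Imm^s$, and likewise controls the higher variations $D_f^k L$. Combined with the $2p$-smoothing of $L_f\i$, this places each term of $L_f\i H_f(f_t,f_t)$ in $H^s$ and makes the composition a real analytic map of $(f,f_t)$; the remaining bilinear and composition estimates are then routine multiplication and module properties of $H^s$ once the operator orders are pinned down.

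Finally, smoothness of the spray on $T\Imm^s$ gives, by the Banach ODE theorem, a local geodesic flow depending smoothly (indeed real-analytically) on initial conditions in $\Imm^s$. To descend to the smooth category and to see that the maximal interval of existence is independent of $s$, I would run the standard no-loss-no-gain argument: the spray commutes with the inclusions $\Imm^{s+1}\hookrightarrow\Imm^s$, so a geodesic with $C^\infty$ initial data, solved in each $\Imm^s$, agrees on overlaps and is therefore smooth on a common time interval. This upgrades the $H^s$ well-posedness to Hadamard well-posedness of the geodesic equation for every $p\in[1,\infty)$.
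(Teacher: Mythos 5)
Your overall strategy---extending the geodesic spray to a real analytic vector field on $T\Imm^r(M,N)$, invoking the real analytic dependence of $(1+\De^g)^p$ on $g$ to handle the non-smooth coefficients, and running a no-loss-no-gain argument to descend to smooth immersions---is exactly the paper's. The gap is the step where you claim that ``the top-order contributions assemble so that $L_f\i$ can absorb them,'' attributed to self-adjointness and positivity of $L_f$. That is not the mechanism, and for the full adjoint the assertion is false. The quantity entering the geodesic equation is not $(\na_m L)h$ for a given $m$ but its $H^0(g)$-adjoint in the $m$-slot, $\Adj(\na L)(h,k)$, defined by $\int_M\g((\na_m L)h,k)\vol^g=\int_M\g(m,\Adj(\na L)(h,k))\vol^g$. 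Since the first variation of the pull-back metric, $D_{f,m}(f^*\g)=\g(\na m,Tf)+\g(Tf,\na m)$, is first order in $m$, forming this adjoint costs an integration by parts that transfers one extra derivative onto $(h,k)$: for $h\in H^r$ one only gets $\Adj(\na L)(h,h)\in H^{r-2p-1}$, hence $L_f\i\Adj(\na L)(h,h)\in H^{r-1}$, and the spray loses a derivative. Neither symmetry nor positivity of $L_f$ repairs this, and no amount of ``order bookkeeping'' on $D_fL$ as an operator in $h$ addresses it, because the loss occurs in the $m$-slot upon dualization.

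The paper's resolution, absent from your sketch, is the splitting $\Adj(\na P)=\Adj(\na P)^\bot+\Adj(\na P)^\top$. The tangential part is computed in closed form using the $\Diff(M)$-equivariance of $P$ (which yields $(\na_{Tf.X}P)h=\na_X(Ph)-P\na_Xh$) together with symmetry; its dangerous piece $Tf.\big(\na\g(Pf_t,f_t)\big)^\sharp$ then cancels exactly against the corresponding piece of the variation of $\vol^{f^*\g}$ in the first variation of the energy (parts \ref{thm:geodesic:a} and \ref{thm:geodesic:c} of the proof of \autoref{thm:geodesic}), leaving only terms that $P\i$ maps back into $H^r$. The normal part is genuinely regular, but for a reason that must be proved: $D_{f,m^\bot}(f^*\g)=-2\g(m^\bot,\na Tf)+\cdots$ contains no derivative of $m^\bot$ (the derivative lands on $Tf$ as the second fundamental form), so no integration by parts is needed and $\Adj(\na P)^\bot(h,h)\in H^{r-2p}$; this is the content of \autoref{lem:variation} and the verification of condition~\ref{sec:conditions:d} of \autoref{sec:conditions} in \autoref{thm:satisfies}. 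Without isolating the normal part and establishing this one-derivative gain, the hypothesis under which the spray is actually $C^\om$ is not verified, and the ``routine module properties'' at the end of your third paragraph do not close the argument. (Note also that $p\ge1$ is needed precisely so that $P\i\big(\g(Pf_t,f_t)\Tr^g(\na Tf)\big)$, which contains the second-order term $\na Tf$, lands back in $H^r$---a term your sketch does not isolate.)
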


This follows from Theorems~\ref{thm:wellposed} and~\ref{thm:satisfies}. 
The result unifies and extends several previously known results:
\begin{itemize}
\item 
For integer-order metrics, local well-posedness on the space of immersions from $M$ to $N$ has been shown in \cite{Bauer2011b}.
However, the proof contained a gap, which was closed in \cite{Mueller2017} for $N=\mathbb R^n$, and which is closed in the present article for general $N$.
The strategy of proof, which goes back to Ebin and Marsden \cite{EM1970}, is to show that the geodesic spray extends smoothly to certain Sobolev completions of the space. 
Our generalization to fractional-order metrics builds on recent results about the smoothness of the functional calculus of sectorial operators \cite{bauer2018smooth}.

\item 
For $N=\mathbb R^n$, the set of $N$-valued immersions becomes a vector space, which simplifies the formulation of the geodesic equation; see \autoref{cor:flat}. 
The treatment of general manifolds $N$ requires a theory of Sobolev mappings between manifolds, which is developed in \autoref{sec:sobolev}. 
Moreover, in the absence of global coordinate systems for these mapping spaces, we recast the geodesic equation using an auxiliary covariant derivative following \cite{Bauer2011b}; see \autoref{lem:connection} and \autoref{thm:geodesic}.

\item 
For $M=N$ our result specializes to the diffeomorphism group $\Diff(M)$, which is an open subset of $\Imm(M,M)$. 
On $\Diff(M)$ we obtain local well-posedness of the geodesic equation for Sobolev metrics of order $p\in [1/2,\infty)$; see \autoref{cor:diffeos}. 
Analogous results have been obtained by different methods (smoothness of right-trivializations) for inertia operators that are defined as abstract pseudo-differential operators  \cite{escher2014right, bauer2015local,  BBCEK2019}.

\item 
For $M=S^1$, our result specializes to the space of immersed loops in $N$. 
For loops in $N=\mathbb R^d$, local well-posedness has been shown by different methods (reparameterization to arc length) in \cite{bauer2018fractional}.
Our analysis extends this result to manifold-valued loops and also to higher-dimensional and more general base manifolds $M$. 
\end{itemize}

\section{Sobolev mappings}
\label{sec:preliminaries}

\subsection{Setting}
\label{sec:setting}
We use the notation of \cite{Bauer2011b} and write $\mathbb N$ for the natural numbers including zero. 
Smooth will mean $C^\infty$ and real analytic $C^\om$.
Sobolev regularity is denoted by $H^r$, and Sobolev spaces $H^s_{H^r}$ of mixed order $r$ in the foot point and $s$ in the fiber are introduced in \autoref{thm:sobolev_mappings}.

Throughout this paper, without any further mention, we fix a real analytic connected closed manifold $M$ of dimension $\dim(M)$ and a real analytic manifold $N$ of dimension $\dim(N)\geq \dim(M)$.

\subsection{Sobolev sections of vector bundles}
\label{sec:sobolev}
\cite[Section~2.3]{bauer2018smooth}
We write $H^s(\mathbb R^m,\mathbb R^n)$ for the Sobolev space of order $s\in\mathbb R$ of $\mathbb R^n$-valued functions on $\mathbb R^m$. 
We will now generalize these spaces to sections of vector bundles.  
Let $E$ be a vector bundle of rank $n\in\mathbb N_{>0}$ over $M$. 
We choose a finite vector bundle atlas and a subordinate partition of unity in the following way. 
Let $(u_i\colon U_i \to u_i(U_i)\subseteq \mathbb R^m)_{i\in I}$ be a finite atlas for $M$, 
let $(\ph_i)_{i\in I}$ be a smooth partition of unity subordinated to $(U_i)_{i \in I}$, and let $\ps_i\colon E|U_i \to U_i\x \mathbb R^n$ be vector bundle charts. 
Note that we can choose open sets $U_i^\circ$ such that $\on{supp}(\ps_i)\subset U_i^\circ \subset \overline{U_i^\circ}\subset U_i$  and each $u_i(U_i^\circ)$ is an open set in $\mathbb R^m$ with Lipschitz boundary
(cf.\@ \cite[Appendix~H3]{behzadan2017certain}).
Then we define for each $s \in \mathbb R$ and $f \in \Ga(E)$
\begin{equation*}
\|f\|_{\Ga_{H^s}(E)}^2 := \sum_{i \in I} \|\on{pr}_cf{\mathbb R^n}\circ\, \ps_i\circ (\ph_i \cdot f)\circ u_i\i \|_{H^s(\mathbb R^m,\mathbb R^n)}^2.
\end{equation*}
Then $\|\cdot\|_{\Ga_{H^s}(E)}$ is a norm, which comes from a scalar product, and we write $\Ga_{H^s}(E)$ for the Hilbert completion of $\Ga(E)$ under the norm. 
It turns out that $\Ga_{H^s}(E)$ is independent of the choice of atlas and partition of unity, up to equivalence of norms. We refer to \cite[Section~7]{triebel1992theory2} and \cite[Section~6.2]{grosse2013sobolev} for further details. 

The following theorem describes module properties of Sobolev sections of vector bundles, which will be used repeatedly throughout the paper.

\begin{theorem}[Module properties]
\label{thm:module}
\cite[Theorem~2.4]{bauer2018smooth}
Let $E_1,E_2$ be vector bundles over $M$
and let $s_1,s_2,s\in\mathbb R$ satisfy  
$$s_1+s_2\geq 0,\; \min(s_1,s_2)\geq s, \text{ and } s_1+s_2-s>\dim(M)/ 2.$$
Then the tensor product of smooth sections extends to a bounded bilinear mapping 
\begin{equation*}
\Ga_{H^{s_1}}(E_1)\x \Ga_{H^{s_2}}(E_2) \to \Ga_{H^{s}}(E_1\otimes E_2).
\end{equation*}
\end{theorem}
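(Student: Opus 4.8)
The plan is to reduce the statement to the analogous multiplication property on $\mathbb R^m$, where $m=\dim(M)$, and then to transfer the estimate to the bundle setting through the chart-and-partition-of-unity definition of $\Ga_{H^s}$. The genuine analytic content is the classical pointwise multiplication theorem for Euclidean Sobolev spaces: under precisely the three hypotheses $s_1+s_2\geq 0$, $\min(s_1,s_2)\geq s$, and $s_1+s_2-s>m/2$, multiplication extends to a bounded bilinear map
\begin{equation*}
H^{s_1}(\mathbb R^m,\mathbb R^{n_1})\x H^{s_2}(\mathbb R^m,\mathbb R^{n_2}) \to H^{s}(\mathbb R^m,\mathbb R^{n_1}\otimes\mathbb R^{n_2})
\end{equation*}
acting componentwise in the fibers, where $n_1=\on{rank}(E_1)$ and $n_2=\on{rank}(E_2)$. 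This is proved by a Littlewood--Paley/paraproduct decomposition, the three hypotheses corresponding to the boundedness of the low--high and high--low paraproduct terms (controlled by $s\leq\min(s_1,s_2)$) and of the resonant high--high term (where $s_1+s_2\geq 0$ keeps it defined and $s_1+s_2-s>m/2$ makes the dyadic sum converge). Since it is standard, I would cite it and treat it as a black box.

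Next I would set up the localization. Fix the atlas $(u_i,U_i)_{i\in I}$, the partition of unity $(\ph_i)$, and bundle charts $\ps_{1,i},\ps_{2,i}$ for $E_1,E_2$; the induced charts $\ps_{1,i}\otimes\ps_{2,i}$ for $E_1\otimes E_2$ may be used to compute the target norm, since $\Ga_{H^s}$ is independent of these choices up to equivalence. The key point is to localize \emph{both} factors simultaneously. Choosing smooth cutoffs $\ch_i$ supported in $U_i$ with $\ch_i\equiv 1$ on $\on{supp}(\ph_i)$, so that $\ph_i=\ph_i\ch_i$, I would write, for smooth sections $f_1,f_2$,
\begin{equation*}
\ph_i\cdot(f_1\otimes f_2) = (\ph_i f_1)\otimes(\ch_i f_2),
\end{equation*}
so that in the chart $\ps_{1,i}\otimes\ps_{2,i}$ the local representative of the left-hand side is the componentwise product $b_{1,i}\otimes c_{2,i}$ of the $\mathbb R^{n_1}$- resp.\ $\mathbb R^{n_2}$-valued local representatives $b_{1,i}$ of $\ph_i f_1$ and $c_{2,i}$ of $\ch_i f_2$, each extended by zero outside $u_i(U_i)$.

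Applying the Euclidean theorem chartwise gives $\|b_{1,i}\otimes c_{2,i}\|_{H^s}\leq C\|b_{1,i}\|_{H^{s_1}}\|c_{2,i}\|_{H^{s_2}}$. Summing the squares over the finite set $I$ and using the definition of the norm, it remains to bound each $\|c_{2,i}\|_{H^{s_2}}$ by a uniform multiple of $\|f_2\|_{\Ga_{H^{s_2}}(E_2)}$. This follows by expanding $\ch_i=\sum_j\ch_i\ph_j$ as a finite sum and observing that each summand $\ch_i\ph_j f_2$, read in chart $i$, arises from the chart-$j$ representative of $\ph_j f_2$ by multiplication with a fixed smooth compactly supported function and composition with the smooth transition maps $u_i\circ u_j\i$ and $\ps_{2,i}\circ\ps_{2,j}\i$; both operations are bounded on $H^{s_2}(\mathbb R^m)$, which is exactly the estimate underlying the independence of the norm from the atlas. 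This yields
\begin{equation*}
\|f_1\otimes f_2\|_{\Ga_{H^s}(E_1\otimes E_2)} \leq C'\,\|f_1\|_{\Ga_{H^{s_1}}(E_1)}\,\|f_2\|_{\Ga_{H^{s_2}}(E_2)}
\end{equation*}
on smooth sections, and the bounded bilinear map extends to the Hilbert completions by density.

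The main obstacle is the Euclidean multiplication theorem itself, but since it is classical I would cite it rather than reprove it. Within the transfer argument the only genuinely delicate point is the simultaneous localization of both factors: naively multiplying by $\ph_i$ localizes a single factor, and splitting $\ph_i=\ph_i^{1/2}\ph_i^{1/2}$ is unavailable because partition functions need not admit smooth square roots; the cutoff identity $\ph_i=\ph_i\ch_i$ resolves this cleanly. The remaining steps — uniform control of the cutoff localizations across the finitely many chart transitions — are precisely the routine estimates already used to establish that $\Ga_{H^s}$ is well defined.
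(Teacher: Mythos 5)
Your proposal is correct, but note that the paper itself contains no proof of this statement: it is imported verbatim as \cite[Theorem~2.4]{bauer2018smooth}, so there is nothing internal to compare against line by line. Your localization argument is the standard route and, as far as one can tell, essentially the one taken in the cited source: reduce to the classical Euclidean multiplication theorem $H^{s_1}\x H^{s_2}\to H^{s}$ under exactly the three stated hypotheses (which you correctly treat as a citable black box; it is proved in, e.g., Behzadan--Holst, which this paper already cites for related chart constructions), and then transfer via the finite atlas and partition of unity. The one genuinely delicate point in the transfer --- that multiplying by $\ph_i$ localizes only one factor of the tensor product --- is handled correctly by your cutoff identity $\ph_i=\ph_i\ch_i$, and the remaining estimate $\|\ch_i f_2\|_{H^{s_2}}\lesssim\|f_2\|_{\Ga_{H^{s_2}}(E_2)}$ is indeed exactly the combination of boundedness of multiplication by fixed smooth compactly supported functions and of composition with chart transitions that already underlies the atlas-independence of the norms $\|\cdot\|_{\Ga_{H^s}}$, valid for all real $s$. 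The only caveat worth recording is that the entire analytic weight of the theorem rests on the Euclidean multiplication theorem in the full range of real, possibly negative, exponents (including the borderline regime $s_1+s_2=0$), so the precise reference you attach there must cover exactly these hypotheses; with that citation in place the argument is complete.
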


The following theorem describes the manifold structure of Sobolev mappings between finite-dimensional manifolds.
It is an elaboration of \cite[5.2 and 5.4]{Michor20} and an extension to the Sobolev case of parts of \cite[Section 42]{KM97}.

\begin{theorem}[Sobolev mappings between manifolds]
\label{thm:sobolev_mappings}
The following statements hold for any $r\in(\dim(M)/2,\infty)$ and $s,s_1,s_2 \in [-r,r]$:
\begin{enumerate}
\item\label{thm:sobolev_mappings:a} 
The space $H^{r}(M,N)$ is a $C^{\infty}$ and a real analytic manifold. 
Its tangent space satisfies in a natural (i.e., functorial) way 
\begin{equation*}
T H^{r}(M,N) = H^r(M,TN) \xrightarrow[\pi_{H^{r}(M,N)}]{(\pi_N)_*} H^{r}(M,N)
\end{equation*} 
with foot point projection given by $\pi_{H^{r}}(M,N)=(\pi_N)_*\colon h \mapsto\pi_N\circ h$.
\item The space $H^s_{H^r}(M,TN)$ of `$H^s$ mappings $M\to TN$ with foot point in $H^{r}(M,N)$' is a real analytic manifold and a real analytic vector bundle over $H^{r}(M,N)$. 
Similarly, spaces such as  
$L(H^{s_1}_{H^r}(M,TN),H^{s_2}_{H^r}(M,TN))$ are real analytic vector bundles over $H^{r}(M,N)$.
\item The space $\Met_{H^{r}}(M)$ of all Riemannian metrics of Sobolev regularity $H^r$ is an open subset of the Hilbert space $\Ga_{H^r}(S^2T^*M)$, and thus a real analytic manifold.
\end{enumerate}
\end{theorem}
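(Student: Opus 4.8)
The plan is to equip $H^r(M,N)$ with a real analytic atlas by the method of local additions, following the smooth case treated in \cite[5.2, 5.4]{Michor20} and \cite[Section~42]{KM97}, and then to reduce the real analyticity of all chart changes to the module properties of \autoref{thm:module}. Since $N$ is real analytic, I would first fix a real analytic Riemannian metric on $N$ (for instance by pulling back the Euclidean metric along a real analytic embedding $N\hookrightarrow\R^K$); its geodesic spray is then a real analytic vector field, so its exponential map $\exp^N$ is real analytic, and $(\pi_N,\exp^N)\colon TN\supseteq V\to N\x N$ restricts to a real analytic diffeomorphism from an open neighborhood $V$ of the zero section onto an open neighborhood of the diagonal. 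For each smooth $f_0\in C^\infty(M,N)$, which form a dense set, the pullback $f_0^*TN\to M$ is a smooth vector bundle, and I define a chart by sending $f$ to the unique section $s\in\Ga_{H^r}(f_0^*TN)$ with $\exp^N_{f_0(x)}(s(x))=f(x)$, the inverse being $s\mapsto\big(x\mapsto\exp^N_{f_0(x)}(s(x))\big)$.

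Because $r>\dim(M)/2$, the Sobolev embedding $\Ga_{H^r}\hookrightarrow C^0$ holds, and this is what makes everything local: the chart domain $\{f:(f_0(x),f(x))\in(\pi_N,\exp^N)(V)\text{ for all }x\}$ is open, the chart image $\{s:s(x)\in V\text{ for all }x\}$ is open in $\Ga_{H^r}(f_0^*TN)$, and the chart is a bijection between them. The crux of part~\ref{thm:sobolev_mappings:a} is the real analyticity of the chart changes. Between charts at smooth $f_0$ and $f_1$ the transition is the pushforward $\Omega_\Phi\colon s\mapsto\Phi\circ s$ along the fiber-respecting map $\Phi\colon(x,v)\mapsto\big(x,(\exp^N_{f_1(x)})\i\exp^N_{f_0(x)}v\big)$, which is real analytic and fibered over $M$ since $f_0,f_1$ are smooth and $\exp^N$ is real analytic. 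The main obstacle is therefore an $\Omega$-lemma: \emph{pushforward by a fiber-respecting real analytic bundle map is real analytic on $\Ga_{H^r}$}. I would prove this by expanding $\Phi(x,-)$ into a fiberwise convergent power series with smooth-coefficient homogeneous components, invoking \autoref{thm:module} (with $s_1=\dots=s=r$, legitimate as $r>\dim(M)/2$) to see that each component induces a bounded multilinear map on $\Ga_{H^r}$ with summable operator norms, and concluding real analyticity from the convenient-calculus characterization. As $f_0$ maps to the zero section, this simultaneously identifies $T_{f_0}H^r(M,N)=\Ga_{H^r}(f_0^*TN)$, hence functorially $TH^r(M,N)=H^r(M,TN)$ with foot point $(\pi_N)_*$, and real analyticity of the chart changes gives the $C^\infty$ structure for free.

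For part~\ref{thm:sobolev_mappings:a}'s sibling statement, the same charts induce a real analytic vector bundle atlas on $H^s_{H^r}(M,TN)$: over the chart domain at $f_0$ the fiber over $f$ is $\Ga_{H^s}(f^*TN)$, trivialized via the vertical derivative of the local addition, and the transition maps are pushforwards by the first jet of $\Phi$ acting on $H^s$ sections whose coefficients carry $H^r$ regularity from the foot point. Their real analyticity again follows from \autoref{thm:module}, now with $s_1=r$, $s_2=s$, and target order $s$; the three hypotheses $r+s\geq 0$, $\min(r,s)\geq s$, and $r+s-s=r>\dim(M)/2$ are met precisely because $s\in[-r,r]$ and $r>\dim(M)/2$, which explains the stated range. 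The operator bundles $L(H^{s_1}_{H^r},H^{s_2}_{H^r})$ are then obtained as associated bundles, using that the transition operators are bounded and depend real analytically on the foot point.

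Part~\ref{thm:sobolev_mappings:a}'s third claim is immediate: fiberwise positive definiteness of a symmetric $2$-tensor is an open condition in $C^0$, and since $r>\dim(M)/2$ forces $\Ga_{H^r}(S^2T^*M)\hookrightarrow C^0$, the set $\Met_{H^r}(M)$ is open in the Hilbert space $\Ga_{H^r}(S^2T^*M)$, and every open subset of a Hilbert space is trivially a real analytic and $C^\infty$ manifold. The only substantial difficulty in the whole theorem is thus the $\Omega$-lemma for pushforwards; once it is in place, the bundle structures and the openness statement follow routinely from the module properties.
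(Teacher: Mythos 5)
Your proposal follows the same overall strategy as the paper: local-addition charts built from the exponential map of a real analytic Riemannian metric on $N$, with real analyticity of the chart changes reduced to an $\Omega$-lemma for pushforwards, which in turn rests on the module properties of \autoref{thm:module}; parts (b) and (c) are then handled just as in the paper (your exponent bookkeeping $s_1=r$, $s_2=s$, target $s$ for the extended chart changes matches the paper's, and your $C^0$-embedding argument for the openness of $\Met_{H^r}(M)$ is correct under the stated hypothesis $r>\dim(M)/2$). The one genuine divergence is the choice of chart centers: you center at smooth $f_0$, whereas the paper centers at real analytic $f\in C^\om(M,N)$, which are likewise dense by \cite[42.7]{KM97}. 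With real analytic centers the transition map $(x,v)\mapsto(\exp_{f_1(x)})\i\exp_{f_0(x)}(v)$ is real analytic jointly in $(x,v)$, so the clean case of the pushforward lemma (\autoref{lem:pushforward}, first alternative of part (b)) applies directly. With merely smooth centers the transition map is only smooth in $x$ and real analytic in $v$, so your $\Omega$-lemma must handle this mixed regularity; your sketch --- fiberwise power series with ``summable operator norms'' --- is precisely where the real work lies, since one must show that the $H^r$-norms of the Taylor coefficients (which involve $x$-derivatives up to order $r$) still obey geometric Cauchy-type bounds uniformly, and that the module constant raised to the $k$-th power does not destroy convergence; the paper packages exactly this into the mixed spaces $H^rC^\om$ and Lemmas \ref{lem:mixed}--\ref{lem:pushforward}. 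So your route is viable but places a heavier burden on the key lemma than necessary; centering the charts at real analytic maps would let you invoke the simpler, jointly real analytic case. (Incidentally, your justification of part (c) via the $C^0$ embedding is cleaner than the paper's own, which appeals to a $C^1$ embedding requiring $r>\dim(M)/2+1$.)
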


\begin{proof}
\begin{enumerate}[wide]
\item 
Let us recall the chart construction:
we use an auxiliary real analytic Riemannian metric $ \hat g$ on $N$ and its exponential mapping $\exp^{ \hat g}$; some of its  properties  are summarized in the following diagram:
$$\xymatrix
{
& 0_N \ar@{_{(}->}[d] \ar@(l,dl)[dl]_-{\text{zero section\quad }} 
& & N \ar@{^{(}->}[d] \ar@(r,dr)[rd]^-{\text{ diagonal}} & 
\\
TN &  V^{TN} \ar@{_{(}->}[l]^{\text{ open  }} \ar[rr]^{ (\pi_N,\exp^{ \hat g}) }_{\cong} & & V^{N\x N} 
\ar@{^{(}->}[r]_{\text{ open  }} & N\x N
}$$
Without loss we may assume that $V^{N\x N}$ is symmetric: 
$$(y_1,y_2)\in V^{N\x N} \iff (y_2,y_1)\in V^{N\x N}.$$
A chart, centered at a real analytic $f\in C^\om(M,N)$, is:
\begin{align*}
{H^r}(M,N)\supset U_f &=\{g: (f,g)(M)\subset V^{N\x N}\} \xrightarrow{u_f}{} \tilde U_f 
\subset \Ga_{H^r}( f^*TN)
\\
u_f(g) = (\pi_N,&\exp^{ \hat g})\i \circ (f,g),\quad u_f(g)(x) = (\exp^{ \hat g}_{f(x)})\i(g(x)) 
\\
(u_f)\i(s) &= \exp^{ \hat g}_f\circ s, \qquad (u_f)\i(s)(x) = \exp^{ \hat g}_{f(x)}(s(x))
\end{align*}
Note that $\tilde U_f$ is open in $\Ga(f^*TN)$. 
The charts $U_f$ for $f\in C^\om(M,N)$ cover $H^r(M,N)$: 
since $C^\om(M,N)$ is dense in $H^r(M,N)$ by \cite[42.7]{KM97} and since $H^r(M,N)$ is continuously embedded in $C^{0}(M,N)$, a suitable $C^0$-norm neighborhood of $g\in H^r(M,N)$ contains a real analytic $f \in C^\om(M,N)$, thus $f\in U_g$, and by symmetry of $V^{N\x N}$ we have $g\in U_f$.

The chart changes, 
$$
\Ga_{H^r}( f_1^*TN)\supset \tilde U_{f_1}\ni s  \mapsto (u_{f_2,f_1})_*(s) := 
(\exp^{ \hat g}_{f_2})\i\circ \exp^{ \hat g}_{f_1}\circ s\in \tilde U_{f_2}\subset \Ga_{H^r}( f_2^*TN),
$$
for charts centered on real analytic $f_1,f_2\in C^\om(M,N)$ are real analytic by Lemma \ref{lem:pushforward} since $r>\dim(M)/2$.

The tangent bundle $TH^r(M,N)$ is canonically glued from the following vector bundle chart changes, which are real analytic by Lemma \ref{lem:pushforward} again: 
\begin{multline}\label{eq:Tchartchanges}
\tilde U_{f_1}\x \Ga_{H^r}( f_1^*TN) \ni (s, h) \mapsto 
 (T (u_{f_2,f_1})_*)(s,h) =  
 \\
 =\big((u_{f_2,f_1})_*(s),  (d_{\text{fiber}} u_{f_2,f_1})_*(s,h)\big)\in \tilde U_{f_2}\x \Ga_{H^r}( f_2^*TN)
\end{multline}
It has the canonical charts 
$$
TH^r(M,N) \supset T\tilde U_f \xrightarrow[(T(\exp_f^{ \hat g})\i_*)]{Tu_f} \tilde U_f \x \Ga_{H^r}( f^*TN).
$$
These identify $TH^r(M,N)$ canonically with $H^r(M,TN)$ since
$$Tu_f\i(s,s') = T(\exp_f^{ \hat g})\circ\on{vl}\circ (s,s')\colon M\to TN\,,$$
where we used  the vertical lift $\on{vl}\colon TN\x_N TN \to TTN$ which is given by $\on{vl}(u_x,v_x)= \p_t|_{t=0}(u_x+t.v_x)$; see 
\cite[8.12 or 8.13]{Michor08}.  
The corresponding foot-point projection is then 
$$\pi_{H^s(M,N)}(T(\exp_f^{ \hat g})\circ \on{vl}\circ (s,s'))= \exp_f^{ \hat g}\circ s = \pi_N\circ T(\exp_f^{ \hat g})\circ (s,s').$$

\item The canonical chart changes (\ref{eq:Tchartchanges}) for $TH^r(M,N)$ extend to 
\begin{multline*}
\tilde U_{f_1}\x \Ga_{H^s}( f_1^*TN) \ni (s, h) \mapsto 
 (T u_{f_2,f_1})_*(s,h) =  
 \\
 =\big((u_{f_2,f_1})_*(s),  (d_{\text{fiber}} u_{f_2,f_1}\circ s)_*(h)\big)\in \tilde U_{f_2}\x \Ga_{H^s}( f_2^*TN),
\end{multline*}
since $d_{\text{fiber}}u_{f_2,f_1}\colon f_1^*TN\x_M f_1^*TN = f_1^* (TN\x_N TN)\to f_2^* TN$ is fiber respecting real analytic by the module properties \ref{thm:module}. 
Note that $d_{\text{fiber}}u_{f_2,f_1}\circ s$ is then an $H^r$-section of the bundle $L(f_1^*TN,f_1^*TN)\to M$, which may be applied to the $H^s$-section $h$ by the module properties \ref{thm:module}. 
These extended chart changes then glue the vector bundle 
$$H^s_{H^r}(M,TN)\xrightarrow{(\pi_N)_*} H^r(M,TN). $$ 

\item The space $\Gamma_{H^r}(S^2T^*M)$ is continuously embedded in $\Ga_{C^1}(S^2T^*M)$ because $r>\dim(M)/2+1$. Thus, the space of metrics is open.
\qedhere
\end{enumerate}
\end{proof}

\subsection{Connections, connectors, and sprays}
\label{sec:connection}
This sections reviews some relations between connections, connectors, and sprays.
It holds for general convenient manifolds $N$, including infinite-dimensional manifolds of mappings, and will be used in this generality in the sequel (see e.g.\@ the proofs of Theorems \ref{thm:geodesic} and \ref{thm:wellposed}). 
\begin{enumerate}
\item \textbf{Connectors.} 
\label{sec:connection:a}
\cite[22.8--9]{Michor08} Any connection $\nabla$ on $TN$ is given in terms of a connector $K\colon TTN\to TN$ as follows:
For any manifold $M$ and function $h\colon M\to TN$, one has $\nabla h = K \circ Th\colon TM\to TN$. 
In the subsequent points we fix such a connection and connector on $N$.
\item \textbf{Pull-backs.} 
\label{sec:connection:b}
\cite[(22.9.6)]{Michor08}
For any manifold $Q$, smooth mapping $g\colon Q\to M$ and $Z_y\in T_yQ$, one has $\nabla_{Tg.Z_y}s = \nabla_{Z_y}(s\circ g)$.
Thus, for $g$-related vector fields $Z\in \X(Q)$ and $X\in \X(M)$, one has $\nabla_Z(s\circ g) = (\nabla_Xs)\circ g$, as summarized in the following diagram:
\begin{equation*}
\xymatrix@C=4em @R=.3em{
& & T^2N \ar[dd]^{K} \\
& & \\
TQ \ar[r]_{Tg} \ar[rruu]^{T(s\circ g)} & TM \ar[ruu]_{Ts} & TN \\
& & TN \ar[dd]^{\pi_N} \\
Q \ar[r]^{g} \ar[uu]^{Z} & M \ar[uu]^{X} \ar[ruu]^{\nabla_Xs} & \\
Q \ar[r]^{g} & M \ar[ruu]^{s} \ar[r]^{f} & N. \\
}\end{equation*}

\item \textbf{Torsion.} 
\label{sec:connection:c}
\cite[(22.10.4)]{Michor08}
For any smooth mapping $f\colon M\to N$ and vector fields $X,Y\in \X(M)$ we have
\begin{align*}
\on{Tor}(Tf.X,Tf.Y)
     &=\nabla_X(Tf\circ Y) - \nabla_Y(Tf\circ X) - Tf\circ [X,Y] 
\\&
= (K \circ \ka_M - K) \circ TTf\circ TX \circ Y.
\end{align*}

\item \textbf{Sprays.} 
\label{sec:connection:d}
\cite[22.7]{Michor08}
Any connection $\nabla$ induces a one-to-one correspondence between fiber-wise quadratic $C^\al$ mappings $\Ph\colon TN\to TN$ and $C^\al$ sprays $S\colon TN\to TTN$. 
Here $\nabla_{\p_t}c_t = \Ph(c_t)$ corresponds to $c_{tt}=S(c_t)$ for curves $c$ in $N$. 
Equivalently, in terms of the connector $K$, the relation between $\Phi$ and $S$ is as follows:
$$\xymatrix@R=0.5em@C=1em{
& TTN \ar[dl]_{T(\pi_N)} \ar[dr]^{\pi_{TN}} 
\\
TN \ar[dr]_{\pi_N}& & TN \ar[dl]^{\pi_N} 
\\
&N &
}
\hspace{4em}
\xymatrix@R=0.5em@C=1em{
& TTN \ar[dl]_{T(\pi_N)} \ar[dr]^{K} 
\\
TN& & TN  
\\
&TN  \ar@{=}[ul] \ar[ur]_{\Ph} \ar@{-->}[uu]^{S\!}&
}
$$ 
The diagram on the left introduces the projections $T(\pi_N)$ and $\pi_{TN}$, which define the two vector bundle structures on $TTN$.
The diagram on the right shows that $\Phi$ and $S$ are related by $\Phi=K\circ S$. 
\end{enumerate}

The following lemma describes how any connection on $TN$ induces via a product-preserving functor from finite to infinite-dimensional manifolds \cite{KMS93, KM96} a connection on the mapping space $H^s_{H^r}(M,TN)$.
The induced connection will be used as an auxiliary tool for expressing the geodesic equation; see \autoref{thm:geodesic}.   

\begin{lemma}[Induced connection on mapping spaces]
\label{lem:connection}
Let $r \in (\dim(M)/2,\infty)$, $s \in [-r,r]$, and $\al \in \{\infty,\om\}$. 
Then any $C^\al$ connection on $TN$ induces in a natural (i.e., functorial) way a $C^\al$ connection on $H^s_{H^r}(M,TN)$.
\end{lemma}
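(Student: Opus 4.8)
The plan is to produce the induced connection as a connector on the vector bundle $H^s_{H^r}(M,TN)\to H^r(M,N)$ of \autoref{thm:sobolev_mappings}, obtained by pushing forward the connector $K\colon TTN\to TN$ along the mapping construction, exactly as a product-preserving functor transports a connection \cite{KMS93, KM96}. Concretely, in the vector bundle charts of \autoref{thm:sobolev_mappings}, where the total space is modeled on $\tilde U_f\x\Ga_{H^s}(f^*TN)\subset\Ga_{H^r}(f^*TN)\x\Ga_{H^s}(f^*TN)$, a tangent vector at a point $(s_0,k)$ is a pair $(\de s,\de k)\in\Ga_{H^r}(f^*TN)\x\Ga_{H^s}(f^*TN)$, and I would set
\begin{equation*}
\mathcal K\big((s_0,k),(\de s,\de k)\big)=\big(s_0,\;\de k+\Ga^f(s_0)(\de s,k)\big),
\end{equation*}
where $\Ga^f$ is the push-forward of the chart Christoffel symbol $\ga$ of $K$, acting pointwise by $\Ga^f(s_0)(\de s,k)(x)=\ga(s_0(x))\big(\de s(x),k(x)\big)$. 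This is the standard chart form of a linear connection and matches the covariant-derivative description $\na h=K\circ Th$ recalled in \autoref{sec:connection}.

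The heart of the proof is to verify that $\mathcal K$ is $C^\al$, and I would factor the only nontrivial term $\Ga^f$ into two steps. First, the superposition (Nemytskii) operator $s_0\mapsto\ga\circ s_0$, which sends $\tilde U_f$ into the $H^r$-sections of the bundle $L(f^*TN\otimes f^*TN,f^*TN)$ of bilinear maps, is $C^\al$ by \autoref{lem:pushforward}, using that $\ga$ is $C^\al$ on $N$ and that $r>\dim(M)/2$ makes $\Ga_{H^r}$ a Banach algebra continuously included in $C^0$. Second, the contraction of this $H^r$-section with the base direction $\de s\in\Ga_{H^r}$ and the fiber vector $k\in\Ga_{H^s}$ is bounded multilinear, hence $C^\om$, by the module properties \autoref{thm:module}: the constraining hypotheses $r+s\geq 0$, $\min(r,s)\geq s$, and $r+s-s=r>\dim(M)/2$ hold precisely because $s\in[-r,r]$ and $r>\dim(M)/2$. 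Composing these, $\Ga^f$ and hence $\mathcal K$ is $C^\al$, and the same estimates exhibit $\mathcal K$ as fiber-linear in $(\de s,\de k)$, as a connector requires.

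It remains to check the connector axioms and the functorial transformation law, and here I would argue that both are inherited for free from the corresponding properties of $K$ on $N$. Fiber linearity, the vertical-lift normalization, and linearity for the second vector bundle structure on the double tangent bundle are pointwise algebraic identities preserved by postcomposition with $K$; equivalently, they are the defining commutative diagrams of a connector, built from products and the tangent functor, which the mapping construction preserves in the sense of \cite{KMS93, KM96}. The transformation rule for $\Ga^f$ under the chart changes $(u_{f_2,f_1})_*$ of \autoref{thm:sobolev_mappings} need not be verified by hand: since $\mathcal K$ is defined globally as postcomposition with the single globally defined connector $K$, its chart expressions automatically glue, and naturality in $M$, $N$, and the connection follows from the naturality of push-forward. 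Both cases $\al\in\{\infty,\om\}$ are handled uniformly, as \autoref{lem:pushforward} and \autoref{thm:module} apply in either regularity.

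The main obstacle I anticipate is the regularity step, not the algebra: one must control the variable coefficient $\ga\circ s_0$, which depends only $H^r$-regularly on the foot point, acting on fiber data of possibly \emph{negative} order $s$. This is exactly the regime where the sharp module inequalities of \autoref{thm:module} and the real-analyticity of superposition operators from \autoref{lem:pushforward} are indispensable, and verifying that their hypotheses persist for every $s\in[-r,r]$ is the crux of the argument.
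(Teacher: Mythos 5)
Your proposal is correct and follows essentially the same route as the paper: the induced connection is postcomposition with the connector $K$ (the paper phrases this as applying the product-preserving functor $H^s_{H^r}(M,\cdot)$ to $K\colon TTN\to TN$ and citing \autoref{lem:pushforward}), with the connector axioms and gluing inherited for free from $K$. Your chart-level decomposition of the regularity check into a Nemytskii operator on the $H^r$ foot-point data followed by module-property contractions against the $H^s$ fiber data is precisely the detailed verification behind the paper's one-line appeal to \autoref{lem:pushforward} on the mixed-order space $H^{r,s,r,s}(M,TTN)$, so no gap.
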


\begin{proof}
Note that $TN\mapsto H^s_{H^r}(M,TN)$ is a product-preserving functor from finite-dimensional manifolds to infinite-dimensional manifolds as described in \cite{KM96} and \cite[Section 31]{KM97}.
Furthermore, note that $TH^s_{H^r}(M,TN)=H^{r,s,r,s}(M,TTN)$, where $(r,s,r,s)$ denotes the Sobolev regularity of the individual components in any local trivialization $TTN\supset TTU \xrightarrow{TTu} u(U)\x (\mathbb R^n)^3\subset (\mathbb R^n)^4$ induced by a chart 
$N\supset U\xrightarrow{u}u(U)\subset \mathbb R^n$;  cf.~the proof of \autoref{thm:sobolev_mappings}.
Applying the functor $H^s_{H^r}(M,\cdot)$ to the connector $K\colon TTN\to TN$ gives the induced connector 
\begin{equation*}
K_*=H^s_{H^r}(M,K)\colon TH^s_{H^r}(M,TN)\to H^s_{H^r}(M,TN), \quad h\mapsto K\circ h.
\end{equation*}
The induced connector is $C^\al$ by \autoref{lem:pushforward}.
\end{proof}

\section{Sobolev immersions}

This section collects some results about the differential geometry of immersions with Sobolev regularity. 
More specifically, it describes the Sobolev regularity of the induced metric, volume form, normal and tangential projections, and fractional Laplacian, as well as variations of these objects with respect to the immersion.
Here the term fractional Laplacian is understood as a $p$-th power of the operator $1+\Delta$, where $\Delta$ is the Bochner Laplacian and $p \in \mathbb R$; see \cite[Section~3]{bauer2018smooth}.

\begin{lemma}[Geometry of Sobolev immersions]
\label{lem:dependence}
The following statements hold for any $r \in (\operatorname{dim}(M)/2+1,\infty)$ and any smooth Riemannian metric $\g$ on $N$:
\begin{enumerate}
\item
\label{lem:dependence:a}
The space $\Imm_{H^{r}}(M,N)$ of all immersions $f\colon M\to N$ of Sobolev class $H^{r}$ is an open subset of the real analytic manifold $H^{r}(M,N)$.

\item 
\label{lem:dependence:b}
The pull-back metric is well defined and real analytic as a mapping
\begin{equation*}
\Imm^r(M,N) \ni f \mapsto f^*\g \in \Met_{H^{r-1}}(M) 
:=
\Ga_{H^{r-1}}(S^2_+T^*M).
\end{equation*}

\item
\label{lem:dependence:c}
The Riemannian volume form is well defined and real analytic as a mapping
\begin{equation*}
\Imm^r(M,N)  \ni f \mapsto \vol^{f^*\g} \in \Ga_{H^{r-1}}(\Vol M).
\end{equation*}

\item 
\label{lem:dependence:d}
The tangential projection $\top\colon T\Imm(M,N)\to\X(M)$ and the normal projection $\bot\colon T\Imm(M,N)\to T\Imm(M,N)$ are defined for smooth $h\in T_f\Imm(M,N)$ via the relation $h = Tf.h^\top+h^\bot$, where $\g(h^\bot(x),T_xf(T_xM))=0$ for all $x\in M$. 
They extend real analytically for any real number $s\in [1-r,r-1]$ to  
\begin{align*}
&\bot \in \Gamma_{C^{\omega}}\Big(L(H^s_{\Imm^r}(M,TN),H^s_{\Imm^r}(M,TN))\Big),\\
&\top\in C^{\omega}\Big(H^s_{\Imm^r}(M,TN), \X_{H^s}(M)\Big),
\end{align*} 
where $H^s_{\Imm^r}(M,TN)$ is the space of `$H^s$ mappings $M\to TN$ with foot point in $\Imm^r(M,N)$' described in  \autoref{thm:sobolev_mappings}.

\item 
\label{lem:dependence:e}
For any real numbers $s,p$ with $s, s-2p \in [1-r,r]$, the fractional Laplacian
\begin{equation*}
f\mapsto (1+\De^{f^*\g})^p
\end{equation*}
is a real analytic section of the bundle 
\begin{equation*}
 GL(H^s_{\Imm^r}(M,TN),H^{s-2p}_{\Imm^r}(M,TN)).
\end{equation*}
\end{enumerate}
\end{lemma}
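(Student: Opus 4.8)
The plan is to reduce the entire lemma to the single analytic statement (b), from which (a) follows at once and on which (c)--(e) are built. To prove (b) I would extend the pullback operation $f\mapsto f^*\g$ to all of $H^r(M,N)$ and work in the charts $U_{f_0}$ of \autoref{thm:sobolev_mappings} centered at real analytic $f_0\in C^\om(M,N)$, writing $g=\exp^{\hat g}_{f_0}\circ\si$ with $\si\in\tilde U_{f_0}\subset\Ga_{H^r}(f_0^*TN)$. In any vector bundle trivialization the one-jet $j^1\si=(\si,\na\si)$ depends boundedly and linearly, hence real analytically, on $\si$, with $\si\in\Ga_{H^r}$ and $\na\si\in\Ga_{H^{r-1}}$. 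There is a fiber-preserving real analytic map $B$, assembled from $\exp^{\hat g}$, its tangent map, the metric $\g$, and the real analytic foot point $f_0$, such that $g^*\g=B_*(j^1\si)$ as a section of $S^2T^*M$; the single derivative lost in passing from $f$ to $Tf$ is absorbed by the module properties \autoref{thm:module}, since the product $\Ga_{H^r}\cdot\Ga_{H^{r-1}}\to\Ga_{H^{r-1}}$ is admissible for $r>\dim(M)/2+1$. By Lemma~\ref{lem:pushforward} the pushforward $B_*$ is real analytic into $\Ga_{H^{r-1}}(S^2T^*M)$, which is (b). For (a) I note that $f$ is an immersion exactly when $f^*\g$ is positive definite, i.e.\ when $f^*\g\in\Met_{H^{r-1}}(M)$; since $r-1>\dim(M)/2$ this is an open subset of $\Ga_{H^{r-1}}(S^2T^*M)$ by \autoref{thm:sobolev_mappings}, so $\Imm_{H^r}(M,N)$ is the preimage of an open set under the continuous map of (b) and hence open.

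Parts (c) and (d) are then deduced by composing (b) with genuinely real analytic fiberwise algebraic operations and invoking \autoref{thm:module} for the bookkeeping. For (c) the Riemannian volume density is $\vol^{f^*\g}=\sqrt{\det(f^*\g)}$ relative to a fixed positive reference density, and $A\mapsto\sqrt{\det A}$ is real analytic on the open cone of positive definite symmetric forms; since $f^*\g\in\Met_{H^{r-1}}$ is bounded away from degeneracy in $C^0$ and $\Ga_{H^{r-1}}$ is a Banach algebra for $r-1>\dim(M)/2$, the associated Nemytskii operator is real analytic, and composing with (b) gives the claim. For (d) I would solve the defining relation $h=Tf\cdot h^\top+h^\bot$ by taking $\g$-inner products with $Tf\cdot X$: since $h^\bot\perp Tf(TM)$ this yields $(f^*\g)(h^\top,X)=\g(h,Tf\cdot X)$, so $h^\top=(f^*\g)\i\,\al$ with $\al(X)=\g(h,Tf\cdot X)$ and $h^\bot=h-Tf\cdot h^\top$. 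The inverse metric $(f^*\g)\i$ depends real analytically on $f$ because matrix inversion is real analytic on $GL$, the assignment $h\mapsto\al$ is fiberwise linear with $H^{r-1}$ coefficients built from $\g$ and $Tf$, and the products land in the asserted bundles for every $s\in[1-r,r-1]$, since the relevant module conditions of \autoref{thm:module} read $r-1+s\geq0$, $r-1\geq s$, and $r-1>\dim(M)/2$, all satisfied in this range. This exhibits $\top$ and $\bot$ as real analytic maps, respectively real analytic sections of the stated $L(\cdot,\cdot)$-bundles.

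For (e) I would combine (b) with the functional calculus developed in \cite[Section~3]{bauer2018smooth}. By (b) the assignment $f\mapsto f^*\g$ is real analytic into $\Met_{H^{r-1}}(M)$, and the Bochner Laplacian $\De^{g}$ together with its fractional powers $(1+\De^{g})^p$ depend real analytically on a metric $g$ of Sobolev class $H^{r-1}$, as a section of $GL(H^s_{\cdot}(M,TN),H^{s-2p}_{\cdot}(M,TN))$, precisely under the stated constraints $s,\,s-2p\in[1-r,r]$. Composing these two real analytic maps yields (e).

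The main obstacle is not a single hard estimate but the systematic Sobolev-order bookkeeping required to make each step land in the correct space after the derivative loss in $Tf$, the metric inversion in (d), and the order shift by $2p$ in (e). The two places to watch are (b), where one verifies that the fiberwise map $B$ is analytic and that the product $\Ga_{H^r}\cdot\Ga_{H^{r-1}}$ closes in $\Ga_{H^{r-1}}$, and (e), whose depth is entirely outsourced to the analytic dependence of the functional calculus in \cite{bauer2018smooth}; everything else is fiberwise algebra combined with the pushforward Lemma~\ref{lem:pushforward} and the module properties \autoref{thm:module}.
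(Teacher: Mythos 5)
Parts \textbf{(a)}--\textbf{(d)} of your proposal are correct, and two of them take routes genuinely different from the paper's. For \textbf{(a)} the paper argues directly that $H^r(M,N)\hookrightarrow C^1(M,N)$ for $r>\dim(M)/2+1$, so being an immersion is an open condition; your derivation of \textbf{(a)} as the preimage of the open positive-definite cone under the continuous map of \textbf{(b)} is an equally valid alternative (openness of $\Met_{H^{r-1}}(M)$ only needs $H^{r-1}\hookrightarrow C^0$). For \textbf{(d)} the paper constructs the normal projection by Gram--Schmidt orthonormalization of local frames together with a projection-valued map into a trivial bundle $N\times V$; your closed formula $h^\top=\big(\g(h,Tf)\big)^{\sharp_{f^*\g}}$, $h^\bot=h-Tf.h^\top$ is simpler and its Sobolev bookkeeping is exactly right: with coefficients in $\Ga_{H^{r-1}}$ acting on $h\in H^s$, the conditions of \autoref{thm:module} read $r-1+s\geq 0$, $s\leq r-1$, $r-1>\dim(M)/2$, which is precisely the range $s\in[1-r,r-1]$. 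Parts \textbf{(b)} and \textbf{(c)} match the paper's argument (the paper outsources \textbf{(c)} to \cite[Lemma~3.3]{bauer2018smooth}, which is the Nemytskii argument you spell out).

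Part \textbf{(e)}, however, has a genuine gap. You propose to obtain \textbf{(e)} by composing \textbf{(b)} with ``real analytic dependence of $(1+\De^g)^p$ on $g\in\Met_{H^{r-1}}(M)$,'' but the operator in question acts on $H^s_{\Imm^r}(M,TN)$, i.e.\ on sections of $f^*TN$: as $f$ varies, both the bundle and the connection on it (the pull-back of the Levi-Civita connection of $\g$, entering the Bochner Laplacian) vary, and this dependence on $f$ is \emph{not} captured by $g=f^*\g$ alone. The perturbation result you invoke, \cite[Theorem~5.4]{bauer2018smooth}, concerns a Laplacian on a \emph{fixed} bundle over $M$ with \emph{fixed} fiber metric and connection, only the base metric varying -- the paper stresses this hypothesis again in parts (c') and (c'') of the proof of \autoref{lem:variation}. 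So your composition does not typecheck: the second map's argument is not merely $g$. The paper closes exactly this hole by writing $TN\oplus E\cong N\times V$ with the product connection, so that the inclusion $i$ and projection $\pi$ respect the connections; this yields $(1+\De^{f^*\g})^p=\pi_*\circ(\Id,(1+\De)^p)\circ i_*$, first for $p=1$ and then for all $p$ because the functional calculus preserves commutation relations, after which \cite[Theorem~5.4]{bauer2018smooth} applies on the fixed trivial bundle and the chain rule through \textbf{(b)} finishes the argument. You already use this trivial-bundle device in \textbf{(d)}; \textbf{(e)} needs it too, together with the connection-compatibility of $i,\pi$ and the commutation argument for fractional powers, and without these steps the claimed two-map composition is not a proof.
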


\begin{proof}
\begin{enumerate}[wide]
\item The space $H^r(M,N)$ is continuously embedded in $C^1(M,N)$ because $r>\dim(M)/2+1$. Thus, the space of immersions is open.
\item follows from the formula $f^*\g=\g(Tf,Tf)$. 
\item follows from \ref{lem:dependence:b} and the real analyticity of $g \mapsto \vol^g$; see \cite[Lemma~3.3]{bauer2018smooth}. 
\item 
Let $U$ be an open subset of $M$ which carries a local frame $X \in \Ga(GL(\mathbb R^m,TU))$.
For any $f \in \Imm^{r}(M,N)$, the Gram-Schmidt algorithm transforms $X$ into an $(f^*\g)$-ortho\-normal frame $Y_f\in \Ga_{H^{r-1}}(GL(\mathbb R^m,TU))$, which is given by
\begin{align*}
\forall j \in \{1,\dots,m\}:
\qquad
Y_f^j=\frac{X^j-\sum_{k=1}^{j-1} (f^*\g)(Y_f^k,X^j) Y_f^k}{\left\|X^j-\sum_{k=1}^{j-1} (f^*\g)(Y_f^k,X^j) Y_f^j\right\|_{f^*\g}}.
\end{align*}
This defines a real analytic map
\begin{align*}
Y\colon \Imm^r(M,N) \to \Ga_{H^{r-1}}(GL(\mathbb R^m,TU)).
\end{align*}
We write $TN$ as a sub-bundle of a trivial bundle $N\times V$ and denote the corresponding inclusion and projection mappings by 
\begin{equation*}
i\colon TN \to N \times V, 
\qquad
\pi\colon N\times V \to TN.
\end{equation*}
This allows one to define a projection from $N\times V$ onto $TN$ and further onto the normal bundle of $f$, which is real analytic as a map
\begin{align*}
p\colon \Imm^r(M,N) &\to H^{r-1}(U,L(V,V)), 
\\
p_f(x)(v) &:= v-\sum_{i=1}^m \g\big(\pi(f(x),v), T_xf.Y_f^i(x)\big). 
\end{align*}
This construction can be repeated for any open set $\tilde U$ such that $T\tilde U$ is parallelizable, and the resulting projections $p_f$ coincide on $U\cap \tilde U$. 
Thus, one obtains a real analytic map  
\begin{align*}
p\colon \Imm^r(M,N) \to H^{r-1}(M,L(V,V)).
\end{align*}
By the module properties \ref{thm:module}, this induces a real analytic map
\begin{gather*}
\tilde p\colon \Imm^r(M,N) \times H^s(M,V) \to H^s(M,V), 
\quad
\tilde p(f,h) := p_f.h.
\end{gather*}
These maps fit into the commutative diagrams
\begin{equation*}
\xymatrix{
T_{f(x)}N 
\ar[r]^\bot
\ar@{^(->}[d]^i
&
T_{f(x)}N 
\ar@{<<-}[d]^\pi
\\
V
\ar[r]^{p_f(x)}
&
V
}
\hspace{0.5em}
\xymatrix{
H^s_{\Imm^r}(M,TN)
\ar[r]^\bot
\ar@{^(->}[d]^{i_*}
&
H^s_{\Imm^r}(M,TN)
\ar@{<<-}[d]^{\pi_*}
\\
\Imm^r(M,N)\times H^s(M,V)
\ar[r]^{\tilde p}
&
\Imm^r(M,N)\times H^s(M,V)
}
\end{equation*}
The maps $i_*$ and $\pi_*$ are real analytic, as shown in part \ref{lem:pushforward:b'} of the proof of \autoref{lem:pushforward}. 
Therefore, the map $\bot=\pi_*\circ \tilde p \circ i_*$ is real analytic. 
The tangential projection $h^\top = Tf^{-1}(h-h^\bot)$ is then also real analytic.

\item There is a bundle $E$ over $N$ such that $TN\oplus E$ is a trivial bundle, i.e., $TN\oplus E \cong N\times V$ for some vector space $V$.
We endow the bundle $E$ with a smooth connection and the bundle $N\times V \cong TN\oplus E$ with the product connection. 
By construction, the inclusion $i\colon TN\to N\times V$ and projection $\pi\colon N\times V \to TN$ respect the connection. 
At the level of Sobolev sections of these bundles, this means that the natural inclusion and projection mappings fit into the following commutative diagram with $p=1$:
\begin{equation*}
\xymatrix@C6em{
H^s_{\Imm^r}(M,TN)
\ar[r]^{(1+\De)^p}
\ar@{^{(}->}[d]^{i_*}
&
H^{s-2p}_{\Imm^r}(M,TN)
\ar@{<<-}[d]^{\pi_*}
\\
\Imm^r(M,N)\times H^s(M,V)
\ar[r]^{(\Id,(1+\De)^p)}
&
\Imm^r(M,N)\times H^{s-2p}(M,V)
}
\end{equation*}
As the functional calculus preserves commutation relations, this extends to all $p$.
Thus, we have reduced the situation to the bottom row of the diagram, where the fractional Laplacian acts on $H^s(M,V)$.
In this case real analytic dependence of the fractional Laplacian on the metric has been shown in \cite[Theorem 5.4]{bauer2018smooth}.
Now the claim follows from the chain rule and \ref{lem:dependence:b}.
\qedhere
\end{enumerate}
\end{proof}

The following lemma describes the first variation of the metric and fractional Laplacian. 
The key point is that the variation in normal directions is more regular than the variation in tangential directions. 
This will be of importance in \autoref{thm:satisfies}.
The lemma is formulated using an auxiliary connection $\hat\nabla$ on $N$, e.g., the Levi-Civita connection of a Riemannian metric $\g$ on $N$. 

\begin{lemma}[First variation formulas]
\label{lem:variation}
Let $\g$ be a smooth Riemannian metric on $N$, 
and let $\hat \nabla$ be a $C^\al$ connection on $N$ for $\al\in\{\infty,\om\}$.
\begin{enumerate}
\item 
\label{lem:variation:a}
For any $r \in (\operatorname{dim}(M)/2+1,\infty)$ and $s \in [2-r,r]$, the variation of the pull-back metric extends to a real analytic map
\begin{equation*}
H^s_{\Imm^r}(M,TN) \ni m \mapsto D_{f,m}(f^*\g) \in \Ga_{H^{s-1}}(S^2T^*M).
\end{equation*}

\item 
\label{lem:variation:b}
For any $r \in (\operatorname{dim}(M)/2+2,\infty)$ and $s \in [2-r,r-2]$, the variation of the pull-back metric in normal directions extends to a real analytic map
\begin{equation*}
H^s_{\Imm^r}(M,TN) \ni m \mapsto D_{f,m^\bot}(f^*\g) \in \Ga_{H^{s}}(S^2T^*M).
\end{equation*}

\item 
\label{lem:variation:c}
For any $r>\operatorname{dim}(M)/2+2$ and $p \in [1,r-1]$ the variation of the fractional Laplacian in normal directions extends to a $C^\al$ map
\begin{equation*}
H^{2p-r}_{\Imm^r}(M,TN) \ni m \mapsto \hat\nabla_{m^\bot}(1+\Delta^{f^*\bar g})^p 
\in L(H^r_{\Imm^r}(M,TN),H^{1-r}_{\Imm^r}(M,TN)),
\end{equation*}
where $\hat\nabla$ is the induced connection on $GL(H^r_{\Imm^r}(M,TN),H^{1-r}_{\Imm^r}(M,TN))$ described in \autoref{lem:connection}, 
and $L(H^r_{\Imm^r}(M,TN),H^{1-r}_{\Imm^r}(M,N))$ is the vector bundle over $\Imm^r(M,N)$ described in \autoref{thm:sobolev_mappings}.
\end{enumerate}
\end{lemma}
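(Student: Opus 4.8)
The plan is to compute the first variation of the pull-back metric explicitly and then exploit the normal constraint to gain one Sobolev order, thereby reducing part (c) to the smoothness of the functional calculus established in \cite{bauer2018smooth}.

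\textbf{Parts (a) and (b).} Fixing a variation $f_t$ with $\partial_t|_{t=0} f_t = m$ and differentiating $f^*\g = \g(Tf,Tf)$ along $\hat\nabla$ gives
\begin{equation*}
D_{f,m}(f^*\g)(X,Y) = \g(\hat\nabla_X m,\, Tf.Y) + \g(Tf.X,\, \hat\nabla_Y m) + (\hat\nabla_m\g)(Tf.X,\, Tf.Y),
\end{equation*}
where the last term vanishes when $\hat\nabla$ is the Levi-Civita connection of $\g$. Since $m\in H^s$ yields $\hat\nabla m\in H^{s-1}$ and $Tf\in H^{r-1}$, the module properties of \autoref{thm:module} place each summand in $\Ga_{H^{s-1}}(S^2T^*M)$ as soon as $2-r\le s\le r$ and $r>\dim(M)/2+1$, while real analyticity in $f$ is inherited from \autoref{lem:dependence:b}; this proves (a). For (b) I would insert $m=m^\bot$ and move the derivative off $m^\bot$: differentiating $\g(m^\bot,Tf.Y)=0$ in the direction $X$ yields
\begin{equation*}
\g(\hat\nabla_X m^\bot,\, Tf.Y) = -\g(m^\bot,\, \hat\nabla_X(Tf.Y)) - (\hat\nabla_X\g)(m^\bot,\, Tf.Y).
\end{equation*}
Substituting this into the first-variation formula expresses $D_{f,m^\bot}(f^*\g)$ through $m^\bot$ itself paired with the second-order quantity $\hat\nabla(Tf)\in\Ga_{H^{r-2}}$, so the module properties now land the result in $\Ga_{H^{s}}(S^2T^*M)$ whenever $s\le r-2$ and $r>\dim(M)/2+2$. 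This is the advertised gain of one derivative in normal directions.

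\textbf{Reduction of (c).} By \autoref{lem:dependence:e} the section $f\mapsto(1+\Delta^{f^*\g})^p$ is already real analytic into $GL(H^s,H^{s-2p})$; the new content is to identify the target of its \emph{normal} covariant derivative. Since metrics form an open subset of a Hilbert space (\autoref{thm:sobolev_mappings}), the metric variation is unambiguous, and by the chain rule together with the naturality of the induced connection (\autoref{lem:connection}),
\begin{equation*}
\hat\nabla_{m^\bot}(1+\Delta^{f^*\g})^p = \big(D_g(1+\Delta^g)^p\big)\big|_{g=f^*\g}\big[D_{f,m^\bot}(f^*\g)\big]
\end{equation*}
up to connection terms of equal or better regularity. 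Invoking part (b) with $s=2p-r$ — admissible precisely because $p\in[1,r-1]$ forces $2p-r\in[2-r,r-2]$ — places the metric variation $D_{f,m^\bot}(f^*\g)$ in $\Ga_{H^{2p-r}}(S^2T^*M)$, one order better than the generic variation of part (a).

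\textbf{Regularity via the functional calculus.} It remains to show that the functional-calculus derivative, applied to a metric variation in $H^{2p-r}$, is an operator in $L(H^r,H^{1-r})$. Starting from the holomorphic representation $(1+\Delta^g)^p = \frac{1}{2\pi i}\oint \lambda^p(\lambda-1-\Delta^g)^{-1}\,d\lambda$ underlying \cite{bauer2018smooth} and differentiating under the integral gives
\begin{equation*}
D_g(1+\Delta^g)^p[\delta g] = \frac{1}{2\pi i}\oint \lambda^p\, R(\lambda)\,\big(D_g\Delta^g[\delta g]\big)\,R(\lambda)\,d\lambda,
\end{equation*}
with $R(\lambda)=(\lambda-1-\Delta^g)^{-1}$ and $D_g\Delta^g[\delta g]$ a second-order operator whose coefficients are built from $\delta g\in H^{2p-r}$ and its first derivative. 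Real analytic dependence of this expression on $g$, hence on $f$ via \autoref{lem:dependence:b}, is exactly \cite[Theorem 5.4]{bauer2018smooth}. Feeding in the resolvent smoothing estimates, the module properties of \autoref{thm:module} for the coefficients, and the $\lambda$-integration then tracks the regularities to the stated target $L(H^r,H^{1-r})$; the extra Sobolev order supplied by the normal gain of part (b) is precisely what upgrades the output from $H^{-r}$ to $H^{1-r}$. The class $C^\al$ rather than $C^\om$ is dictated solely by the regularity of $\hat\nabla$ entering the induced connection.

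\textbf{Main obstacle.} I expect the crux to be the uniform-in-$\lambda$ bookkeeping in this last step: one must show that the resolvent estimates combine with the low regularity $H^{2p-r}$ of the coefficients of $D_g\Delta^g[\delta g]$ so that the contour integral converges in $L(H^r,H^{1-r})$ and depends $C^\al$ on $f$. This is where the gain of part (b) is indispensable and where the machinery of \cite{bauer2018smooth} carries the weight; by contrast, identifying the covariant derivative with the naive variation and checking that the connection terms of \autoref{lem:connection} are harmless is comparatively routine.
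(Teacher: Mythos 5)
Your treatment of parts (a) and (b) matches the paper's: the first variation formula $\g(\nabla m,Tf)+\g(Tf,\nabla m)$, the integration by parts against the constraint $\g(m^\bot,Tf)=0$ to trade $\nabla m^\bot$ for $\g(m^\bot,\nabla Tf)$ with $\nabla Tf\in\Ga_{H^{r-2}}$, and the module properties give exactly the stated gain of one order. The reduction of (c) via the chain rule, part (b) with $s=2p-r$, and the trivial-bundle/Christoffel-symbol bookkeeping is also the paper's route (the paper does the last point more carefully through the inclusion and projection $i_*,\pi_*$ of $TN$ into $N\times V$, with a connection on $E$ independent of the metric so that $D_{g,m}\nabla^g$ is tensorial).

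The genuine gap is in the step you yourself flag as the main obstacle, and it is not merely bookkeeping: your proposed formula
\begin{equation*}
D_g(1+\Delta^g)^p[\delta g] = \frac{1}{2\pi i}\oint \lambda^p\, R(\lambda)\,\big(D_g\Delta^g[\delta g]\big)\,R(\lambda)\,d\lambda
\end{equation*}
does not converge into $L(H^r,H^{1-r})$ by direct estimation when $p\ge 1$. The resolvent bounds trade decay in $\lambda$ against Sobolev smoothing, roughly $\|R(\lambda)\|_{H^a\to H^{a+2\theta}}\lesssim|\lambda|^{\theta-1}$ for $\theta\in[0,1]$, so the integrand is only $O(|\lambda|^{p+\theta_1+\theta_2-2})$ and integrability along the contour would require $\theta_1+\theta_2<1-p\le 0$, which is impossible. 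This is precisely why the paper does not differentiate the contour integral directly but instead inserts the identity
\begin{equation*}
D_{A,B}A^p = A^{r-1-p}\,D_{A,\,A^{p-r+1}B}\,A^p,
\end{equation*}
which shifts the very-low-regularity perturbation $B$ (here $D_{g,m}\Delta^g$ with $m\in\Ga_{H^{2p-r}}$, an operator $H^r\to H^{2p-r-1}$) into a perturbation living in the intersection space $Z$ of two operator scales where the abstract holomorphy result \cite[Theorem~4.5]{bauer2018smooth} applies, and then restores the target space by composing with $A^{r-p-1/2}$ on the outside. Your citation of \cite[Theorem~5.4]{bauer2018smooth} is also off target: that result gives analyticity of $g\mapsto(1+\Delta^g)^p$ itself (used in \autoref{lem:dependence}), not of its derivative with respect to perturbations of regularity $H^{2p-r}$; the relevant statement is the generalization of \cite[Lemma~5.5]{bauer2018smooth} that constitutes step (c'') of the paper's proof. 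Without this regularity-shifting device, the passage from the $H^{2p-r}$ metric variation supplied by part (b) to an operator in $L(H^r,H^{1-r})$ is not established.
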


\begin{proof}
We will repeatedly use the module properties \ref{thm:module}. 
\begin{enumerate}[wide]
\item follows from the following formula for the first variation of the pull-back metric \cite[Lemma~5.5]{Bauer2011b}:
\begin{equation*}
D_{f,m}(f^*\g)
=
\g(\nabla m,Tf)+\g(Tf,\nabla m) 
\end{equation*}

\item Splitting the above formula into tangential and normal parts of $m$ yields
\begin{equation*}
D_{f,m}(f^*\g)
=
-2\g(m^\bot,\nabla Tf)+g(\nabla m^\top,\cdot)+g(\cdot,\nabla m^\top).
\end{equation*}
Now the claim follows from the real analyticity of the projection $\bot$ in \autoref{lem:dependence}.

\item[\textbf{(c')}] 
\makeatletter
\protected@edef\@currentlabel{\textbf{(c')}}
\makeatother
\label{lem:variation:c'}
We claim for any bundle $E$ over $M$ with fixed fiber metric and fixed connection (i.e., not depending on $g$) that the following map is real analytic:
\begin{equation*}
\Met_{H^{r-1}}(M) \times \Ga_{H^{s}}(S^2T^*M))\ni (g,m) \mapsto D_{g,m}\Delta^g \in L(\Ga_{H^{q}}(E),\Ga_{H^{s+q-r-1}}(E)),
\end{equation*}
where $s\in [2-r,r-1]$ and $q\in [2-s,r]$.
To prove the claim we proceed similarly to \cite[Lemma~3.8]{bauer2018smooth}. 
As the connection on $E$ does not depend on the metric $g$, 
\begin{align*}
D_{g,m}\De^gh 
&= 
-D_{g,m}(\Tr^{g^{-1}}\nabla^g\nabla h) 
= 
-(D_{g,m}\Tr^{g^{-1}})\nabla^g\nabla h 
-\Tr^{g^{-1}}(D_{g,m}\nabla^g)\nabla h.
\end{align*}
Here $\nabla^g$ is the covariant derivative on $T^*M\otimes E$. 
The proof of \cite[Lemma~3.8]{bauer2018smooth} and some multi-linear algebra show that $D_{g,m}\nabla^g$ is tensorial and real analytic as a map
\begin{multline*}
\Met_{H^{r-1}}(M)\times \Ga_{H^{s}}(S^2T^*M) \ni (g,m) 
\\
\mapsto D_{g,m}\nabla^g \in \Ga_{H^{s-1}}(T^*M\otimes L(T^*M\otimes E,T^*M\otimes E)).
\end{multline*}
Moreover, the following maps are real analytic by  \cite[Lemmas~3.2 and~3.5]{bauer2018smooth}:
\begin{align*}
\Met_{H^{r-1}}(M) \ni g &\mapsto g\i \in \Ga_{H^{r-1}}(S^2TM), 
\\
\Met_{H^{r-1}}(M) \ni g &\mapsto \nabla^g \in  L(\Ga_{H^{q-1}}(T^*M\otimes E),\Ga_{H^{q-2}}(T^*M\otimes T^*M\otimes E)).
\end{align*}
Together with the module properties~\ref{thm:module} this establishes  \ref{lem:variation:c'}.

\item [\textbf{(c'')}]
\makeatletter
\protected@edef\@currentlabel{\textbf{(c'')}}
\makeatother
\label{lem:variation:c''} 
Using \ref{lem:variation:c'} we will now study the smooth dependence of  fractional Laplacians. In particular we claim for any bundle $E$ over $M$ with fixed fiber metric and fixed connection and any $p \in (1,r-1]$ that the following map is real analytic:
\begin{equation*}
\Met_{H^{r-1}}(M) \times \Ga_{H^{2p-r}}(S^2T^*M))\ni (g,m) \mapsto D_{g,m}(1+\Delta^g)^p \in L(\Ga_{H^{r}}(E),\Ga_{H^{1-r}}(E)).
\end{equation*}
The claim is a generalization of \cite[Lemma~5.5]{bauer2018smooth} to perturbations $m$ with even lower Sobolev regularity and uses the fact that the connection on $E$ does not depend on the metric $g$.
Let $X,Y,Z$ be the spaces of operators given by
\begin{align*}
X&=L(\Ga_{H^{r}}(E),\Ga_{H^{r-2}}(E))\cap L(\Ga_{H^{3-r}}(E),\Ga_{H^{1-r}}(E)),
\\
Y&=L(\Ga_{H^{r}}(E),\Ga_{H^{-r+2p-1)}}(E))\cap L(\Ga_{H^{r-2p+2}}(E),\Ga_{H^{1-r}}(E)),
\\
Z&=L(\Ga_{H^{r}}(E),\Ga_{H^{r-2}}(E))\cap L(\Ga_{H^{r-2p+2}}(E),\Ga_{H^{r-2p}}(E)).
\end{align*}
Note that the conditions $r>2$ and $p>1$ ensure that $X$, $Y$, and $Z$ are intersections of operator spaces on distinct Sobolev scales, as required in \cite[Theorem 4.5]{bauer2018smooth}
Moreover, let $U\subseteq X$ be an open neighborhood of $1+\De^g$ with $g \in \Met_{H^{r-1}}(M)$ such that the holomorphic functional calculus is well-defined and holomorphic on $U$ in the sense of \cite[Theorem 4.5]{bauer2018smooth}.
Then the desired map is the composition of the following two maps: 
\begin{gather*}
\Met_{H^{r-1}}(M)\times \Ga_{H^{2p-r}}(S^2T^*M) \in (g,m)\mapsto (1+\De^g,D_{g,m}\De^g) \in (X,Y),
\\
(U,Y) \ni (A,B) \mapsto D_{A,B}A^p \in L(\Ga_{H^{r}}(E),\Ga_{H^{1-r}}(E)).
\end{gather*}
The first map is real analytic by \autoref{lem:dependence}.\ref{lem:dependence:e} and \ref{lem:variation:c'}.
The second map has to be interpreted via the following identity, which is shown in the proof of \cite[Lemma~5.5]{bauer2018smooth} using the resolvent representation of the functional calculus:
\begin{equation*}
\forall A \in U, \forall B \in Y\cap Z: 
\qquad 
D_{A,B}A^p
=
A^{r-1-p}D_{A,A^{p-r+1}B}A^p.
\end{equation*}
The right-hand side above is the composition of the following maps, which are again real analytic by \cite[Theorem 4.5]{bauer2018smooth}:
\begin{gather*}
(U,Y) \ni (A,B) \mapsto (A,A^{p-r+1/2}B) \in (U,Z),
\\
(U,Z) \ni (A,B) \mapsto (A,D_{A,B}A^p) \in U\times L(\Ga_{H^{r}}(E),\Ga_{H^{r-2p}}(E))
\\
U\times L(\Ga_{H^{r}}(E),\Ga_{H^{r-2p}}(E)) \ni (A,B) \mapsto A^{r-p-1/2}B \in L(\Ga_{H^{r}}(E),\Ga_{H^{1-r}}(E))
\end{gather*}
This proves \ref{lem:variation:c''}. 
Note that \ref{lem:variation:c''} extends to $p=1$ thanks to \ref{lem:variation:c'}.

\item
As in the proof of \autoref{lem:dependence}.\ref{lem:dependence:e}, we write $i$ and $\pi$ for the inclusion and projection mappings of $TN$, seen as a sub-bundle of a trivial bundle $TN\oplus E\cong N\times V$ with $C^\al$ product connection.
If we consider $i_*$ and $\pi_*$ as real analytic sections of operator bundles,
\begin{align*}
i_* \in \Gamma_{C^\om}(L(H^r_{\Imm^r}(M,TN),H^r_{\Imm^r}(M,N\times V)), 
\\
\pi_* \in \Gamma_{C^\om}(L(H^{r-2p}_{\Imm^r}(M,TN),H^{r-2p}_{\Imm^r}(M,N\times V)),
\end{align*}
then the covariant derivative of the fractional Laplacian can be expressed as follows:
\begin{multline*}
\hat\nabla_{m^\bot}(1+\Delta^{f^*\g})^p
=
(\hat\nabla_{m^\bot}\pi_*)(\Id,(1+\Delta^{f^*\g})^p)i_*
\\
+
\pi_*\big(\hat\nabla_{m^\bot}(\Id,(1+\Delta^{f^*\g})^p)\big)i_*
+
\pi_*(\Id,(1+\Delta^{f^*\g})^p)(\hat\nabla_{m^\bot}i_*).
\end{multline*}
The maps $i_*$ and $\pi_*$ are real analytic, and consequently their covariant derivatives are $C^\al$. 
According to \autoref{lem:connection}, the canonical connection $D$ on the vector space $V$ induces a real analytic connection on the bundle of bounded linear operators $L(H^r_{\Imm^r}(M,N\times V), H^{1-r}_{\Imm^r}(M,N\times V))$.
By general principles, this connection differs from $\hat\nabla$ by a $C^\al$ tensor field, often called the Christoffel symbol. 
Thus, it suffices to show that the following map is $C^\al$: 
\begin{multline*}
H^{2p-r}_{\Imm^r}(M,TN) \ni m \mapsto D_{f,m^\bot}(\Id,(1+\Delta^{f^*\g})^p) 
\\
\in L(H^r_{\Imm^r}(M,N\times V),H^{1-r}_{\Imm^r}(M,N\times V)).
\end{multline*}
As $D$ is the canonical connection, this is equivalent to the following map being $C^\al$:
\begin{equation*}
H^{2p-r}_{\Imm^r}(M,TN) \ni m \mapsto D_{f,m^\bot} (1+\Delta^{f^*\g})^p \in L(H^r(M,V),H^{1-r}(M,V)).
\end{equation*}
By \ref{lem:variation:b} with $s=2p-r$, the variation of the pull-back metric in normal directions is real analytic as a map
\begin{align*}
H^{2p-r}_{\Imm^r}(M,TN) \ni m \mapsto D_{f,m^\bot}(f^*\g) \in \Ga_{H^{2p-r}}(S^2T^*M).
\end{align*}
Thus, \ref{lem:variation:c} follows from \ref{lem:variation:c''} and the chain rule.
\qedhere
\end{enumerate}
\end{proof}

\section{Weak Riemannian metrics on spaces of immersions}
\label{sec:metrics} 

The main result of this section is that the geodesic equation of Sobolev-type metrics is locally well posed under certain conditions on the operator governing the metric. 
The setting is general and encompasses  several examples, including in particular fractional Laplace operators.

\subsection{Sobolev-type metrics}
\label{sec:conditions}
Within the setup of \autoref{sec:setting}, we consider Sobolev-type Riemannian metrics on the space of immersions $f\colon M\to N$ of the form
\begin{align*}
G^P_f(h,k)&=\int_M \g(P_fh,k) \vol(f^*\g),
\qquad
h,k \in T_f\Imm(M,N),
\end{align*}
where $\g$ is a $C^\al$ Riemannian metric on $N$ for $\al\in\{\infty,\om\}$, 
and where $P$ is an operator field which satisfies the following conditions for 
some $p \in [0,\infty)$, 
some $r_0 \in (\dim(M)/2+1,\infty)$, 
and all $r \in [r_0,\infty)$:
\begin{enumerate}
\item
\label{sec:conditions:a}
Assume that $P$ is a $C^\al$ section of the bundle
\begin{equation*}
GL(H^r_{\Imm^r}(M,TN),H^{r-2p}_{\Imm^r}(M,TN)) \to \Imm^r(M,N),	
\end{equation*}
where $GL$ denotes bounded linear operators with bounded inverse. 

\item 
\label{sec:conditions:b}
Assume that $P$ is $\Diff(M)$-equivariant in the sense that one has for all $\ph\in\Diff(M)$, $f \in \Imm^r(M,N)$, and $h\in T_f\Imm^r(M,N)$ that
\begin{equation*}
(P_fh)\circ\ph = P_{f\circ\ph}(h\circ\ph).
\end{equation*}

\item 
\label{sec:conditions:c}
Assume for each $f\in \Imm^r(M,N)$ that the operator $P_f$ is nonnegative and symmetric with respect to the $H^0(g)$ inner product on $T_f\Imm^r(M,N)$, i.e., for all $h,k \in T_f\Imm^r(M,N)$:
\begin{equation*}
\int_M \g(P_f h,k)\vol(g) = \int_M \g(h,P_f k)\vol(g),
\qquad
\int_M \g(P_f h,h)\vol(g) \geq 0.
\end{equation*}

\item
\label{sec:conditions:d}
Assume that the normal part of the adjoint $\Adj(\nabla P)^\bot$, defined by 
\begin{equation*}
\int_M \g((\nabla_{m^\bot}P)h,k) \vol(g) 
=
\int_M \g(m,\Adj(\nabla P)^\bot(h,k)) \vol(g)
\end{equation*}
for all $f \in \Imm(M,N)$ and $m,h,k \in T_f\Imm$, exists and is a $C^\al$ section of the bundle of bilinear maps
\begin{equation*}
L^2(H^r_{\Imm^r}(M,TN),H^r_{\Imm^r}(M,TN);H^{r-2p}_{\Imm^r}(M,TN)).
\end{equation*} 
Here $\nabla$ denotes the induced connection (see \autoref{lem:connection}) of the Levi-Civita connection of $\g$.
\end{enumerate}

\begin{remark}
In \cite[Section~6.6]{Bauer2011b} we had more complicated conditions, and we implicitly claimed that they imply the conditions in \autoref{sec:conditions} above.
There was, however, a significant gap in the argumentation of the main result.
Namely, we did not show the smoothness of the extended mappings on Sobolev completions.
This article closes this gap and extends the analysis to the larger class of fractional order metrics.
\end{remark}

We now derive the geodesic equation of Sobolev-type metrics. 
Recall that the usual form of the geodesic equation is $f_{tt}=\Gamma_f(f_t,f_t)$, where the time derivatives $f_t$ and $f_{tt}$ as well as the Christoffel symbols $\Gamma$ are expressed in a chart. 
This raises the problem that the space $\Imm(M,N)$ lacks canonical charts, unless $N$ admits a global chart.
However, $\Imm(M,N)$ carries a canonical connection, namely, the one induced by the metric $\g$ on $N$, which has been described in \autoref{lem:connection}.
This auxiliary connection, which will be denoted by $\nabla$, allows one to write the geodesic equation as $\nabla_{\partial_t}f_t=\Gamma_f(f_t,f_t)$, where $\Gamma$ is a difference between two connections and therefore tensorial.
In the special case where $N$ is an open subset of Euclidean space, this coincides with the usual derivative $\nabla_{\partial_t}f_t=f_{tt}$; cf.~\autoref{cor:flat}.

\begin{theorem}[Geodesic equation]
\label{thm:geodesic}
\cite[Theorem~6.4]{Bauer2011b}
Assume the conditions of \autoref{sec:conditions}.
Then a smooth curve $f\colon [0,1]\to \Imm(M,N)$ is a critical point of the energy functional 
\begin{equation*}
E(f)
=
\frac12 \int_0^1 \int_M \g(P_f f_t, f_t)\vol^g dt
\end{equation*}
if and only if it satisfies the geodesic equation 
\begin{align*}
\nabla_{\p_t} f_t= &\frac12P_f\i\Big(\Adj(\nabla P)_f(f_t,f_t)^\bot-2Tf\,\g(P_ff_t,\nabla f_t)^\sharp
-\g(P_f f_t,f_t)\,\Tr^g(\nabla Tf)\Big)
\\&
-P_f\i\Big((\nabla_{f_t}P)f_t+\Tr^g\big(\g(\nabla f_t,Tf)\big) P_ff_t\Big).
\end{align*}
This also holds for smooth curves in $\Imm^r(M,N)$ for any $r\geq r_0$.
\end{theorem}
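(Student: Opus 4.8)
The plan is to derive the geodesic equation as the Euler--Lagrange equation of $E$ by a standard first-variation computation, carried out covariantly with respect to the auxiliary connection $\nabla$ of \autoref{lem:connection}. Concretely, I would fix a variation $f\colon(t,s)\mapsto f(t,s)\in\Imm(M,N)$ with $f(\cdot,0)$ the given curve and with fixed endpoints, write $m=\p_s f$ for the variation field (so $m$ vanishes at $t=0,1$), and compute $\p_s|_{s=0}E(f(\cdot,s))$. Three structural facts make this tractable: (i) since $\nabla$ is induced by the Levi--Civita connection of $\g$, it is metric, so $\p_s\,\g(X,Y)=\g(\nabla_{\p_s}X,Y)+\g(X,\nabla_{\p_s}Y)$; (ii) since the Levi--Civita connection is torsion-free and torsion is functorial, the induced connection is torsion-free as well, giving the symmetry of second covariant derivatives $\nabla_{\p_s}f_t=\nabla_{\p_t}f_s=\nabla_{\p_t}m$; and (iii) the first variation of the volume form is $\p_s\vol^{f^*\g}=\Tr^g\!\big(\g(\nabla m,Tf)\big)\vol^{f^*\g}$, which follows from \autoref{lem:variation}.\ref{lem:variation:a} and the standard formula $D\vol^g=\tfrac12\Tr^g(Dg)\vol^g$.

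Carrying out the differentiation and using $\nabla_{\p_s}(P_f f_t)=(\nabla_m P)f_t+P_f\nabla_{\p_t}m$, one obtains three groups of terms: a term $\g((\nabla_m P)f_t,f_t)$, the two symmetric terms $\g(P_f\nabla_{\p_t}m,f_t)+\g(P_f f_t,\nabla_{\p_t}m)$, and the volume term $\g(P_f f_t,f_t)\Tr^g(\g(\nabla m,Tf))$. Using the $H^0(g)$-symmetry of $P_f$ from \autoref{sec:conditions}.\ref{sec:conditions:c} (valid after integrating over $M$) the two middle terms collapse to $2\,\g(P_f f_t,\nabla_{\p_t}m)$, which I would then integrate by parts in $t$; here one must track the $t$-dependence of $\vol^{f^*\g}$, producing both the principal term $-\g(\nabla_{\p_t}(P_f f_t),m)=-\g((\nabla_{f_t}P)f_t+P_f\nabla_{\p_t}f_t,\,m)$ and the correction $-\g(P_f f_t,m)\Tr^g(\g(\nabla f_t,Tf))$. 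The correction already matches the term $\Tr^g(\g(\nabla f_t,Tf))P_f f_t$ in the statement, while the appearance of $P_f\nabla_{\p_t}f_t$ is exactly what makes $\nabla_{\p_t}f_t$ solvable after inverting $P_f$.

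The main obstacle, and the genuinely delicate point, is the term $\g((\nabla_m P)f_t,f_t)$, because the adjoint of the full covariant derivative $\nabla P$ is not assumed to exist with good regularity: \autoref{sec:conditions}.\ref{sec:conditions:d} only controls its normal part $\Adj(\nabla P)^\bot$. I would therefore split $m=Tf.m^\top+m^\bot$. The normal contribution is handled directly by the defining identity of $\Adj(\nabla P)^\bot$, yielding $\tfrac12\,\g(m,\Adj(\nabla P)_f(f_t,f_t)^\bot)$. The tangential contribution is not covered by \ref{sec:conditions:d} and must instead be extracted from the $\Diff(M)$-equivariance \autoref{sec:conditions}.\ref{sec:conditions:b}: differentiating the identity $(P_f h)\circ\ph=P_{f\circ\ph}(h\circ\ph)$ along the flow $\ph_s=\Fl^X_s$ of a vector field $X\in\X(M)$ expresses $(\nabla_{Tf.X}P)h$ purely in terms of $P$ and the infinitesimal reparameterization $\L_X$. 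Feeding this into the tangential part of $\g((\nabla_m P)f_t,f_t)$ and combining it with the surviving piece of the volume term $\tfrac12\,\g(P_f f_t,f_t)\Tr^g(\g(\nabla m,Tf))$ (integrated by parts over $M$, which moves $\nabla$ off $m$ and generates the tension field $\Tr^g(\nabla Tf)$) produces precisely the remaining two terms $-2\,Tf\,\g(P_f f_t,\nabla f_t)^\sharp$ and $-\g(P_f f_t,f_t)\Tr^g(\nabla Tf)$ of the geodesic equation.

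Once all contributions are collected, $\p_s|_0 E=\int_0^1\int_M \g(m,\Xi)\vol^{f^*\g}\,dt$ for an expression $\Xi$ built from the terms above; since $m$ ranges over all variation fields vanishing at the endpoints, the fundamental lemma of the calculus of variations forces $\Xi=0$, and solving for $\nabla_{\p_t}f_t$ by applying the fiber-wise invertible operator $P_f\i$ (\autoref{sec:conditions}.\ref{sec:conditions:a}) yields the stated equation. Finally, to obtain the last sentence of the theorem I would note that every operation used --- the covariant derivatives, the pairings, the two integrations by parts, and the first-variation formulas --- is real analytic, respectively bounded, at the $H^r$ level by \autoref{lem:dependence}, \autoref{lem:variation}, and the module properties \autoref{thm:module}, with $P_f\i$ available from \autoref{sec:conditions}.\ref{sec:conditions:a}; hence the identity proved for smooth curves persists verbatim for curves in $\Imm^r(M,N)$, $r\ge r_0$.
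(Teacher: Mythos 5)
Your proposal follows essentially the same route as the paper's proof: the same covariant first-variation computation yielding the three summands, the same splitting of $\g((\nabla_m P)f_t,f_t)$ into a normal part handled by condition \ref{sec:conditions:d} and a tangential part extracted from the $\Diff(M)$-equivariance, the same integration by parts in $t$ with the volume-form correction, and the same cancellation of the tangential adjoint against the volume term to produce $-2Tf\,\g(P_ff_t,\nabla f_t)^\sharp$ and the tension-field term. The argument and the extension to $\Imm^r(M,N)$ via boundedness of all operations at the $H^r$ level match the paper's reasoning, so the proposal is correct.
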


Here we used the following notation: 
$g=f^*\g$ is the pull-back metric and $\sharp=g^{-1}$ its associated musical isomorphism, 
the operator $P$ is seen as a map $P\colon \Imm \to GL( T\Imm,T\Imm)$, 
its composition with $f$ is denoted by $P_f\colon\R\to  GL( T\Imm,T\Imm)$,
its covariant derivative with respect to the connection on $L(T\Imm,T\Imm)$ induced by $\nabla$ is denoted by $\nabla P\colon T\Imm\to L(T\Imm,T\Imm)$,
the canonical vector field on $\R$ is denoted by $\p_t\colon \mathbb R \to T\mathbb R$,
the time derivative $f_t=\p_t f$ is viewed as a map $f_t\colon \R\times M\to TN$ in the expression $\nabla f_t\colon \R\times M \to T^*M\otimes f^*TN$ and as a map $f_t\colon \mathbb R\to  T\Imm$ elsewhere, 
the spatial derivative $Tf$ is viewed as a map $Tf\colon \R\times M \to T^*M\otimes f^*TN$,
and the map $\nabla Tf\colon \R\times M \to T^*M\otimes T^*M\otimes f^*TN$ is the second fundamental form.

\begin{proof}
We will consider variations of the curve energy functional
along one-parameter families $f\colon (-\ep,\ep) \x [0,1] \x M \to N$ of curves of immersions with fixed endpoints. 
The variational parameter will be denoted by $s \in (-\ep,\ep)$, 
the time-parameter by $t \in [0,1]$.
Then the first variation of the energy $E(f)$ can be calculated as follows:
\begin{equation*}
\p_s E(f)
=
\p_s \frac12 \int_0^1 \int_M \g(P_f f_t, f_t)\vol^g dt.
\end{equation*}
As the connection respects $\g$ and is a derivation of tensor products, and as the operator $P_f$ is symmetric, we have
\begin{align*}
\p_s E(f)
=
\frac12\int_0^1\int_M  \g\left(
(\nabla_{\p_s} P_f)f_t+2P_f\nabla_{\p_s} f_t+ \frac{\partial_s \vol^g}{\vol^g} P_f f_t, f_t \right) \vol^gdt.
\end{align*}
We will treat each of the three summands above  separately, making extensive use of properties \ref{sec:connection} of the (induced) connection $\nabla$:
\begin{enumerate}[wide]
\item
\label{thm:geodesic:a}
For the first summand we have by the definition of the adjoint that
\begin{multline*}
\frac12\int_0^1\int_M  \g(
(\nabla_{\p_s} P_f)f_t,f_t) \vol^gdt
=
\frac12\int_0^1\int_M  \g(
(\nabla_{f_s} P)f_t,f_t) \vol^gdt
\\=
\frac12\int_0^1\int_M  \g\Big(
f_s,\Adj(\nabla P)(f_t,f_t)^{\bot}+\Adj(\nabla P)(f_t,f_t)^{\top}\Big)
 \vol^gdt.
\end{multline*}
To calculate the tangential part of the adjoint, thereby establishing its existence, we need the following formula for the tangential variation of $P$, which holds for any vector field $X$ on $M$:
\begin{align*}
(\nabla_{Tf.X} P)(h) &=
(\nabla_{\p_t|_0} P_{f\circ \Fl_t^X})(h \circ \Fl_0^X) \\&=
\nabla_{\p_t|_0}\big(P_{f\circ \Fl_t^X}(h \circ \Fl_t^X)\big) -
P_{f\circ \Fl_0^X} \big( \nabla_{\p_t|_0}(h \circ \Fl_t^X)\big) \\&=
\nabla_{\p_t|_0}\big(P_f(h) \circ \Fl_t^X\big) -
P_f \big( \nabla_{\p_t|_0}(h \circ \Fl_t^X)\big) \\&=
\nabla_X\big(P_f(h)) - P_f \big( \nabla_X h\big),
\end{align*}
where $Fl_t^X$ denotes the flow of the vector field $X$ at time $t$ and where we used the equivariance of $P$ in the step 
from the second to the third line. 
Using this and the symmetry of $P$ we get
\begin{align*} &
\frac12\int_0^1\int_M  \g\Big(f_s,\Adj(\nabla P)(f_t,f_t)^{\top}\Big) \vol^gdt=
\int_M \g\big((\nabla_{Tf.f_s^\top} P)f_t,f_t\big) \vol(g)  \\&\qquad=
\int_M \g\big(\nabla_{f_s^\top}(P_ff_t)-P_f(\nabla_{f_s^\top}f_t),f_t\big) \vol(g) \\&\qquad=
\int_M \big(\g(\nabla_{f_s^\top}(P_ff_t),f_t)-\g(\nabla_{f_s^\top}f_t,P_ff_t)\big) \vol(g) \\&\qquad=
\int_M \g\Big(Tf .f_s^\top,Tf.\big(\g(\nabla (P_ff_t),f_t)-\g(\nabla f_t,P_ff_t)\big)^\sharp\Big) \vol(g)\\&\qquad=
\int_M \g\Big(Tf .f_s^\top,Tf.\big(\nabla \g( P_f f_t,f_t)-2\g(\nabla f_t,P_f f_t)\big)^\sharp\Big) \vol(g)\\&\qquad=
\int_M \g\Big(f_s,Tf.\big(\nabla \g( P_f f_t,f_t)-2\g(\nabla f_t,P_f f_t)\big)^\sharp\Big) \vol(g).
\end{align*}
Thus we obtain the following formula for the first summand of the variation of $E$:
\begin{multline*}
\frac12\int_0^1\int_M  \g(
(\nabla_{\p_s} P_f)f_t,f_t) \vol^gdt\\=
\frac12\int_0^1\int_M  \g\Big(
f_s,\Adj(\nabla P)(f_t,f_t)^{\bot}+Tf.\big(\nabla\g( Pf_t,f_t)-2\g(\nabla f_t,Pf_t)\big)^\sharp\Big)
 \vol^gdt.
\end{multline*}

\item
\label{thm:geodesic:b}
As $P_f$ is symmetric and  the covariant derivative on $\Imm(M,N)$ is torsion-free (see \autoref{sec:connection}), i.e.,
\begin{equation*}
\nabla_{\p_t}f_s - \nabla_{\p_s}f_t=Tf.[\p_t,\p_s]+\on{Tor}(f_t,f_s) = 0,
\end{equation*}
we get for the second summand
\begin{equation*}
\int_0^1\int_M  \g\left(
P_f\nabla_{\p_s} f_t, f_t \right) \vol^gdt= \int_0^1\int_M  \g\left(
\nabla_{\p_t} f_s, P_f f_t \right) \vol^gdt.
\end{equation*}
Integration by parts for $\p_t$ 
yields
\begin{multline*}
\int_0^1\int_M  \g\left(
\nabla_{\p_t} f_s, P_f f_t \right) \vol^gdt\\= \int_0^1\int_M \bigg( \g\left(
 f_s, - (\nabla_{f_t}P)f_t-P_f(\nabla_{f_t}f_t) \right)- \frac{{\p_t} \vol^g}{\vol^g} \bigg)\vol^gdt
\end{multline*}
To further expand the last term we use the following formula for the variation of the volume form \cite[Lemma~5.7]{Bauer2011b}:
\begin{equation*}
\frac{\p_t \vol^g}{\vol^g} = 
\Tr^g\big(\g(\nabla f_t,Tf)\big) 
=
-\nabla^*( \g(f_t,Tf))-\g\big(f_t,\Tr^g(\nabla Tf)\big),
\end{equation*}
where $\nabla Tf$ is the second fundamental form and where $\nabla^*$ denotes the adjoint of the covariant derivative.
Using the first of the above formulas we obtain for the second summand:
\begin{multline*}
\int_0^1\int_M  \g\left(
\nabla_{\p_t} f_s, P_f f_t \right) \vol^gdt\\= \int_0^1\int_M \bigg( \g\left(
 f_s, - (\nabla_{f_t}P)f_t-P_f(\nabla_{f_t}f_t) \right)-\Tr^g\big(\g(\nabla f_t,Tf)\big) P_f f_t \bigg)\vol^gdt.
\end{multline*}

\item
\label{thm:geodesic:c}
Using the second version of the variational formula for the volume in the third summand in the variation of the energy yields
\begin{align*}
&\frac12\int_0^1\int_M 
 \frac{\partial_s \vol^g}{\vol^g}  \g\left(P_f f_t, f_t \right) \vol^gdt\\
&\qquad =- \frac12\int_0^1\int_M 
\Big(\nabla^*( \g(f_s,Tf))+\g\big(f_s,\Tr^g(\nabla Tf)\big) \Big) \g\left(P_f f_t, f_t \right) \vol^gdt\\
&\qquad=-\frac12\int_0^1\int_M \bigg(\g(f_s, Tf.(\nabla \g\left(P_f f_t, f_t \right))^{\sharp}+\Tr^g(\nabla Tf) \g\left(P_f f_t, f_t \right)\bigg) \vol^gdt.
\end{align*}
\end{enumerate}
Taken together, the calculations of \ref{thm:geodesic:a}--\ref{thm:geodesic:c} yield
\begin{multline*}
\p_s E(f)
=
\frac12\int_0^1\int_M  \g\bigg(f_s,\Adj(\nabla P)(f_t,f_t)^{\bot}-2Tf.\g(\nabla f_t,Pf_t)^\sharp-2(\nabla_{f_t}P)f_t
\\-2P_f(\nabla_{f_t}f_t)-2\Tr^g\big(\g(\nabla f_t,Tf)\big) P_f f_t -\Tr^g(\nabla Tf) \g\left(P_f f_t, f_t \right)  \bigg) \vol^gdt.
\end{multline*}
Setting $\partial_s E(f)=0$ for arbitrary perturbations $f_s$ yields the geodesic equation on the space $\Imm(M,N)$ of smooth immersions.
This statement extends to the space $\Imm^r(M,N)$ of Sobolev immersions because the right-hand side of the geodesic equation is continuous in $f \in C^\infty([0,1],\Imm^r(M,N))$, as shown in part \ref{thm:wellposed:a} of the proof of \autoref{thm:wellposed}.
\qedhere
\end{proof}

We next show well-posedness of the geodesic equation using the Ebin--Marsden approach \cite{EM1970} of extending the geodesic spray to a smooth vector field on  $T\Imm^r$ for sufficiently high $r$ and showing that the solutions exist on a time interval which is independent of $r$. 

\begin{theorem}[Local well-posedness of the geodesic equation]
\label{thm:wellposed}
Assume the conditions of \autoref{sec:conditions} with $p\geq 1$.
Then the following statements hold for all $r \in [r_0,\infty)$:
\begin{enumerate}
\item 
\label{thm:wellposed:a}
The initial value problem for geodesics has unique local solutions in $\Imm^r(M,N)$. 
The solutions depend $C^\al$ on $t$ and on the initial condition $f_t(0)\in T\Imm^r(M,N)$.

\item
\label{thm:wellposed:b}
The Riemannian exponential map $\exp^{P}$ exists and is $C^\al$ on a neighborhood of the zero section in $T\Imm_{H^r}$, 
and $(\pi,\exp^{P})$ is a diffeomorphism from a (smaller) neighborhood of the zero section to a neighborhood of the diagonal in $\Imm^r(M,N)\x \Imm^r(M,N)$. 

\item
\label{thm:wellposed:c}
The neighborhoods in \ref{thm:wellposed:a}--\ref{thm:wellposed:b} are uniform in $r$ and can be chosen open in the $H^{r_0}$ topology.
Thus, \ref{thm:wellposed:a}--\ref{thm:wellposed:b} continue to hold for $r=\infty$, i.e., on the Fr\'echet manifold $\Imm(M,N)$ of smooth immersions.
\end{enumerate}
\end{theorem}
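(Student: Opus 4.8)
The plan is to follow the Ebin--Marsden strategy~\cite{EM1970}: recast the geodesic equation as the flow of a second-order vector field (the geodesic spray) on $T\Imm^r(M,N)$, prove that this spray is $C^\al$, and then read off local well-posedness from the existence theory for $C^\al$ ordinary differential equations on Banach manifolds, together with an $r$-independence argument for the last part.

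First I would put the geodesic equation of \autoref{thm:geodesic} into spray form. The induced connection of \autoref{lem:connection} has a connector $K_*$ on $T\Imm^r$, and the spray--connection correspondence \ref{sec:connection:d} associates to the fiberwise-quadratic right-hand side $\Gamma_f(f_t,f_t)$ of the geodesic equation a second-order vector field $S\colon T\Imm^r\to TT\Imm^r$ characterized by $K_*\circ S=\Gamma$. Geodesics are then exactly the base curves of integral curves of $S$, so parts \ref{thm:wellposed:a} and \ref{thm:wellposed:b} will follow once $S$ — equivalently $\Gamma$, viewed as a fiberwise-quadratic map $H^r_{\Imm^r}(M,TN)\to H^r_{\Imm^r}(M,TN)$ — is shown to be $C^\al$.

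The heart of the matter is therefore a term-by-term regularity analysis of $\Gamma$. Since $P_f^{-1}$ is a $C^\al$ section of $GL(H^{r-2p}_{\Imm^r},H^r_{\Imm^r})$ by \ref{sec:conditions:a}, it suffices to check that every factor to which it is applied lands $C^\al$ in $H^{r-2p}_{\Imm^r}(M,TN)$. The algebraic terms $Tf\,\g(P_ff_t,\nabla f_t)^\sharp$, $\g(P_ff_t,f_t)\,\Tr^g(\nabla Tf)$, and $\Tr^g(\g(\nabla f_t,Tf))\,P_ff_t$ are handled by combining the module properties \ref{thm:module} with the real analyticity of $f\mapsto f^*\g$, $\vol^{f^*\g}$, $\top$, and $\bot$ from \autoref{lem:dependence}; the standing hypothesis $p\geq1$ is exactly what makes the Sobolev orders close up. The term $\Adj(\nabla P)_f(f_t,f_t)^\bot$ is $C^\al$ into $H^{r-2p}_{\Imm^r}$ directly by \ref{sec:conditions:d}. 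The delicate term is $(\nabla_{f_t}P)f_t$, which I would split along $f_t=Tf.f_t^\top+f_t^\bot$. For the tangential piece the $\Diff(M)$-equivariance \ref{sec:conditions:b} yields, just as in the proof of \autoref{thm:geodesic}, the identity $(\nabla_{Tf.f_t^\top}P)f_t=\nabla_{f_t^\top}(P_ff_t)-P_f(\nabla_{f_t^\top}f_t)$, whose right-hand side is a commutator of $P_f$ with a covariant derivative in which the top-order contributions cancel; it is thus $C^\al$ of order $2p$ and sends $f_t\in H^r$ into $H^{r-2p}$. The normal piece $(\nabla_{f_t^\bot}P)f_t$ is the one where a naive estimate loses a derivative; it is tamed by the extra regularity of normal variations, which in the present abstract setting is encoded precisely by the normal-adjoint hypothesis \ref{sec:conditions:d} together with the self-adjointness \ref{sec:conditions:c}. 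With $S$ established as $C^\al$, part \ref{thm:wellposed:a} follows from the $C^\al$ dependence of local flows of $C^\al$ vector fields on the initial value and time, and part \ref{thm:wellposed:b} follows by setting $\exp^P=\pi\circ\Fl^S_1$, observing that $T_{0_f}(\pi,\exp^P)$ is invertible, and invoking the inverse function theorem in the $C^\al$ category.

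The main obstacle is part \ref{thm:wellposed:c}, the uniformity of the existence time and of the neighborhoods in $r$ — the genuine ``no loss of derivatives'' phenomenon. By uniqueness, a solution with $H^r$ initial data coincides on its interval of existence with the solution of the lower-order equation on $\Imm^{r_0}$, so it remains to rule out that an $\Imm^{r_0}$-solution with $H^{r}$ initial data leaves $\Imm^{r}$ strictly earlier. I would establish this through a Gronwall argument, bounding $\tfrac{d}{dt}\|f_t\|_{H^{r}}^2$ by $C(\|f\|_{H^{r_0}})\,(1+\|f_t\|_{H^{r}}^2)$ with a constant depending only on the $H^{r_0}$-norm; the point that makes such an estimate available is again structural, namely that $\Diff(M)$-equivariance \ref{sec:conditions:b} forces the top-order commutator of $P$ with differentiation to be of lower order and that \ref{sec:conditions:d} keeps the derivative-losing term $(\nabla_{f_t}P)f_t$ under control after integration by parts. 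A uniform lower bound on the existence time then shows that the domains of the flow and of $\exp^P$ contain sets open in the $H^{r_0}$-topology; intersecting over all $r$ and using that $\Imm(M,N)=\bigcap_{r}\Imm^r(M,N)$ carries the projective-limit Fr\'echet structure yields the statement for $r=\infty$.
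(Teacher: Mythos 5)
Your treatment of parts \ref{thm:wellposed:a} and \ref{thm:wellposed:b} follows the paper's route: verify term by term, using conditions \ref{sec:conditions:a}--\ref{sec:conditions:d} and the module properties \ref{thm:module}, that the right-hand side of the geodesic equation is a fiber-wise quadratic $C^\al$ map $T\Imm^r\to T\Imm^r$, convert it to a $C^\al$ spray via the connector $K_*$ of \autoref{lem:connection}, and apply Picard--Lindel\"of and the inverse function theorem. One imprecision: you single out $(\nabla_{f_t}P)f_t$ as the delicate term and propose to control its normal piece via \ref{sec:conditions:c} and \ref{sec:conditions:d}. In fact that term is harmless: since $P$ is a $C^\al$ section of the operator bundle by \ref{sec:conditions:a}, its covariant derivative is automatically a $C^\al$ section of $L(T\Imm^r,L(H^r_{\Imm^r},H^{r-2p}_{\Imm^r}))$, so $(\nabla_{f_t}P)f_t\in H^{r-2p}$ needs no splitting. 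The genuinely delicate object is the adjoint $\Adj(\nabla P)(f_t,f_t)^\bot$ (a transpose with respect to the $H^0(g)$ pairing, which a priori loses derivatives), and that is exactly what hypothesis \ref{sec:conditions:d} is designed to handle; your invocation of \ref{sec:conditions:c}--\ref{sec:conditions:d} for $(\nabla_{f_t^\bot}P)f_t$ conflates these two distinct objects, though this does not break the argument.

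The real gap is in part \ref{thm:wellposed:c}. You reduce correctly to showing that an $\Imm^{r_0}$-solution with $H^r$ data cannot leave $\Imm^r$ early, but you then assert a tame Gronwall estimate $\tfrac{d}{dt}\|f_t\|_{H^r}^2\le C(\|f\|_{H^{r_0}})(1+\|f_t\|_{H^r}^2)$ without deriving it. The hypotheses of \autoref{sec:conditions} are stated separately for each $r$ and give no quantitative commutator estimates of $P$ with differentiation; for an abstract operator field $P$ satisfying only \ref{sec:conditions:a}--\ref{sec:conditions:d} there is no evident way to produce such a bound, and the sketch ("equivariance forces the top-order commutator to be of lower order") is a heuristic, not a proof. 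The paper avoids energy estimates entirely: it proves a no-loss--no-gain statement (\autoref{lem:nolossnogain}) by differentiating the $\Diff(M)$-equivariance of the flow, $\Fl^{S}_t(h\circ\Fl^X_u)=\Fl^{S}_t(h)\circ\Fl^X_u$, with respect to $u$ at $u=0$ for arbitrary $X\in\X(M)$, obtaining $T(\Fl^S_t(h))\circ X=T(\Fl^S_t)(Th\circ X)$, and then invoking the Ebin--Marsden regularity lemma (if $Th\circ X\in H^r$ for all $X$ then $h\in H^{r+1}$) to conclude that the $H^r$-flow preserves $H^{r+1}$ data on its full existence interval, whence $J^{r+1}_h=J^r_h$ and the neighborhoods are uniform in $r$. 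This soft argument uses only condition \ref{sec:conditions:b} and is what makes the statement provable at the stated level of generality; your part \ref{thm:wellposed:c} needs to be replaced by it, or else the Gronwall inequality must be established, which the given hypotheses do not support.
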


\begin{proof} 
\begin{enumerate}[wide]
\item
This can be shown as in \cite[Theorem~6.6]{Bauer2011b}. 
Let $\Ph(f_t)$ denote the right-hand side of the geodesic equation, i.e.,  
\begin{multline*}
\Ph(f_t)
=
\frac12P\i\Big(\Adj(\nabla P)(f_t,f_t)^\bot-2Tf\,\g(Pf_t,\nabla f_t)^\sharp
-\g(Pf_t,f_t)\,\Tr^g(\nabla Tf)\Big)
\\
-P\i\Big((\nabla_{f_t} P)f_t+\Tr^g\big(\g(\nabla f_t,Tf)\big) Pf_t\Big).
\end{multline*}
A term-by-term investigation using the conditions \ref{sec:conditions} and the module properties~\ref{thm:module} shows that $\Phi$ is a fiber-wise quadratic $C^\al$ map
\begin{align*}
\Phi\colon T\Imm^r(M,N)\to T\Imm^r(M,N).
\end{align*}
Here the condition $p\geq 1$ is needed to ensure that the term $P^{-1}\big(\g(Ph,h)\,\Tr^g(\nabla Tf)\big)$ is again of regularity $H^r$.
The map $\Phi$ corresponds uniquely to a $C^\al$ spray $S$ via the induced connection described in \autoref{lem:connection}.
In more detail: 
The right-hand side diagram in the proof of \autoref{sec:connection}.\ref{sec:connection:d} holds for any manifold $N$ with connector $K$. 
Thus, replacing $(N,K)$ by $(\Imm^r(M,N),K_*)$, one obtains the diagram 
$$
\xymatrix@R=2mm{
&TT\Imm^r (M,N) \ar[dl]_{T(\pi_N)_*} \ar[dr]^{K_*} 
\\
T\Imm^r(M,N)& & T\Imm^r(M,N) 
\\
&T\Imm^r(M,N)  \ar@{=}[ul] \ar[ur]_{\Ph} \ar@{-->}[uu]^{S}&
}
$$
The spray $S$ is $C^\al$ because the connection $K_*$ and the map $\Ph$ are $C^\al$.
Therefore, by the theorem of Picard-Lindel\"of, $S$ admits a $C^\al$ flow  
\begin{equation*}
\on{Fl}^S\colon U \to T\Imm^{r}(M,N)
\end{equation*}
for a maximal  open neighborhood $U$ of $\{0\}\x T\Imm^r(M,N)$ in $\mathbb R\x T\Imm^r(M,N)$. 
The neighborhood $U$ is $\Diff(M)$-invariant thanks to the $\Diff(M)$-equivariance of $S$.

\item follows from \ref{thm:wellposed:a} as in \cite[Theorem~6.6]{Bauer2011b}, and \ref{thm:wellposed:c} follows from \autoref{lem:nolossnogain} by writing $\Imm(M,N)$ as the intersection of all $\Imm^{r_0+k}(M,N)$ with $k \in \mathbb N_{\geq 0}$.
\qedhere
\end{enumerate}
\end{proof}

\begin{corollary}
\autoref{thm:wellposed} with $\al=\om$ remains valid if the assumptions in \autoref{sec:conditions} are modified as follows: 
the metric $\g$ is only $C^\infty$, 
and the connection $\nabla$ in condition~\ref{sec:conditions:d} is replaced by an auxiliary connection $\hat\nabla$, which is induced by a torsion-free $C^\om$ connection on $N$, as described in \autoref{lem:connection}.
\end{corollary}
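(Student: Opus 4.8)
The plan is to re-run the proof of \autoref{thm:wellposed}, replacing the Levi-Civita connection of $\g$ by the real analytic torsion-free connection $\hat\nabla$ throughout. As there, the case $\al=\om$ comes down to showing that the geodesic spray $S$ is real analytic; and since the connector $K_*$ induced by $\hat\nabla$ is $C^\om$ by \autoref{lem:connection}, this reduces to showing that the right-hand side $\hat\Phi$ of the geodesic equation, now in the form $\hat\nabla_{\p_t}f_t=\hat\Phi(f_t)$, is a fiber-wise quadratic $C^\om$ map $T\Imm^r(M,N)\to T\Imm^r(M,N)$. The point is that the regularity of the two kinds of ingredients decouples. Every purely metric quantity---$f^*\g$, the volume $\vol^{f^*\g}$, the fiber pairings $\g(\cdot,\cdot)$, the trace $\Tr^g$, and the sharp $\sharp=g\i$---is real analytic in $f$ by \autoref{lem:dependence}, \emph{independently of whether $\g$ is smooth or real analytic}; every connection-dependent quantity---$\hat\nabla Tf$, $\hat\nabla f_t$, the operator derivative $\hat\nabla P$, and the adjoint $\Adj(\hat\nabla P)^\bot$---is real analytic precisely because $\hat\nabla$ is, by \autoref{lem:variation}.\ref{lem:variation:c} and the modified condition~\ref{sec:conditions:d}. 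In \autoref{thm:wellposed} this second group was real analytic only through the Levi-Civita connection inheriting the regularity of $\g$; passing to $\hat\nabla$ severs this link, which is why only condition~\ref{sec:conditions:d} has to be restated.

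First I would redo the variation of the energy as in \autoref{thm:geodesic}, but with $\hat\nabla$ in place of $\nabla$. Because $\hat\nabla$ is torsion-free, the identity $\hat\nabla_{\p_t}f_s=\hat\nabla_{\p_s}f_t$ and hence the integration by parts of part~\ref{thm:geodesic:b}, together with the equivariance and symmetry manipulations for $P$ in part~\ref{thm:geodesic:a}, carry over unchanged. The single new feature is that $\hat\nabla$ need not be metric, so differentiating the integrand $\g(P_ff_t,f_t)$ in the variational parameter acquires an extra term $(\hat\nabla_{f_s}\g)(P_ff_t,f_t)$. Being linear in the perturbation $f_s$, pairing it off and solving for $\hat\nabla_{\p_t}f_t$ contributes to $\hat\Phi$ one further fiber-wise quadratic summand, built from the pulled-back tensor $f^*(\hat\nabla\g)$, the fiber metric $\g$, and $P\i$. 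It lands in $H^r$ by the same module-property bookkeeping as the other summands, and it is real analytic because $f\mapsto f^*(\hat\nabla\g)$ is real analytic by the argument proving \autoref{lem:dependence}.\ref{lem:dependence:b}---the mere smoothness of the tensor $\hat\nabla\g$ on $N$ being no more of an obstruction there than the smoothness of $\g$ itself.

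Granting this, a term-by-term inspection of $\hat\Phi$, exactly as in part~\ref{thm:wellposed:a} of the proof of \autoref{thm:wellposed}, shows that $\hat\Phi$ is fiber-wise quadratic and $C^\om$: one uses the module properties~\ref{thm:module}, the hypothesis $p\geq 1$ to keep $P\i(\g(Pf_t,f_t)\,\Tr^g(\hat\nabla Tf))$ of class $H^r$, and the real analyticity of $P$ and $P\i$ from condition~\ref{sec:conditions:a}. Via the $C^\om$ connector $K_*$ this $\hat\Phi$ corresponds to a $C^\om$ spray $S$, and the Picard--Lindel\"of theorem then yields a $C^\om$ flow $\Fl^S$, just as in \autoref{thm:wellposed}. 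This gives \ref{thm:wellposed:a} and \ref{thm:wellposed:b} with $\al=\om$, while the $r$-uniformity of \ref{thm:wellposed:c} follows from \autoref{lem:nolossnogain} verbatim, that lemma bearing only on persistence of Sobolev regularity and being insensitive to the choice of auxiliary connection.

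The main obstacle I anticipate is the bookkeeping in this re-derivation: one must check that replacing the metric connection by the non-metric $\hat\nabla$ introduces \emph{only} the correction term above, that it does not disturb the cancellations in parts~\ref{thm:geodesic:a}--\ref{thm:geodesic:c} that hinge on the symmetry and $\Diff(M)$-equivariance of $P$, and that no summand of lower Sobolev order appears. Once it is verified that the sole new contribution is the real analytic, fiber-quadratic term built from $f^*(\hat\nabla\g)$, the rest of the argument is word for word that of \autoref{thm:wellposed}.
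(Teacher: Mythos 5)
Your strategy coincides with the paper's: re-derive the geodesic equation with the auxiliary connection $\hat\nabla$ in place of the Levi-Civita connection, isolate the correction terms caused by $\hat\nabla\g\neq 0$, check that they are fiber-wise quadratic and real analytic, and then run the spray/flow argument of \autoref{thm:wellposed} unchanged (the paper also notes explicitly, as you do implicitly, that the spray itself is independent of the choice of auxiliary connection). The one concrete inaccuracy is your count of the correction terms. You assert that non-metricity enters only through the $s$-differentiation of the integrand $\g(P_ff_t,f_t)$, producing the single extra term $\tfrac12(\hat\nabla_{f_s}\g)(P_ff_t,f_t)$. But metric compatibility is used in two further places in the proof of \autoref{thm:geodesic}: in part~\ref{thm:geodesic:a}, in the identity $\g(\nabla(P_ff_t),f_t)-\g(\nabla f_t,P_ff_t)=\nabla\g(P_ff_t,f_t)-2\g(\nabla f_t,P_ff_t)$ used to compute the tangential part of the adjoint, and in part~\ref{thm:geodesic:b}, in the integration by parts that converts $\int\g(\nabla_{\p_t}f_s,P_ff_t)$ into $-\int\g(f_s,\nabla_{\p_t}(P_ff_t))+\dots$. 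The paper's proof accordingly records three corrections to $\p_sE$, namely $-\tfrac12(\hat\nabla_{Tf.f_s^\top}\g)(P_ff_t,f_t)$, $-(\hat\nabla_{f_t}\g)(f_s,P_ff_t)$, and $+\tfrac12(\hat\nabla_{f_s}\g)(P_ff_t,f_t)$, which after pairing off against $f_s$ yield three extra summands in the geodesic equation rather than one.

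This does not sink your argument: each of the additional terms is, like the one you found, built from $\hat\nabla\g$ pulled back along $f$, together with $P_ff_t$, $f_t$, $Tf$, $\g^{-1}$, and the tangential projection, hence fiber-wise quadratic in $f_t$ and real analytic in $f$ by the same pull-back and module-property reasoning (for the tangential one you additionally need the real analyticity of $\top$ from \autoref{lem:dependence}.\ref{lem:dependence:d}). But the verification you yourself single out as the main obstacle --- that \emph{only} your one term appears and the manipulations in parts~\ref{thm:geodesic:a}--\ref{thm:geodesic:c} are otherwise undisturbed --- comes out differently from what you assert, and a complete write-up must track the two missing terms.
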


\begin{proof}
In the proof of \autoref{thm:geodesic}, the geodesic equation is derived by expressing the first variation $\p_s E$ of the energy functional using the Levi-Civita connection of $\g$. 
If the auxiliary connection $\hat\nabla$ is used  instead, then the following additional terms appear in the formula for $\p_s E$:
\begin{multline*}
\int_0^1\int_M \bigg( -\frac12
(\hat\nabla_{Tf.f_s^\top}\g)(P_ff_t,f_t)
-(\hat\nabla_{f_t}\g)(f_s,P_ff_t)
+\frac12(\hat\nabla_{f_s}\g)(P_ff_t,f_t)\bigg)\vol^gdt
\end{multline*} 
Accordingly, letting $\Psi$ denote the right-hand side of the original geodesic equation with $\nabla P$ replaced by $\hat\nabla P$, i.e., 
\begin{multline*}
\Psi(f_t)
=
\frac12P\i\Big(\Adj(\hat\nabla P)(f_t,f_t)^\bot-2Tf\,\g(Pf_t,\nabla f_t)^\sharp
-\g(Pf_t,f_t)\,\Tr^g(\nabla Tf)\Big)
\\
-P\i\Big((\hat\nabla_{f_t} P)f_t+\Tr^g\big(\g(\nabla f_t,Tf)\big) Pf_t\Big),
\end{multline*}
the geodesic equation becomes 
\begin{multline*}
\hat\nabla_{\p_t}f_t
=
\Psi(f_t) 
-\frac12 Tf.\big(\g^{-1}(\hat\nabla\g)(P_ff_t,f_t)\big)^\top
-\g^{-1}(\hat\nabla_{f_t}\g)(\cdot,P_ff_t) 
\\ 
+\frac12 \g^{-1}(\hat\nabla\g)(P_ff_t,f_t).
\end{multline*}
One verifies as in the proof of \autoref{thm:wellposed} that the right-hand side, seen as a function of $f_t$, is a fiber-wise quadratic real analytic map $T\Imm^r(M,N)\to T\Imm^r(M,N)$.
As the auxiliary connection $\hat\nabla$ is real analytic, this implies that the corresponding spray is real analytic, as well; see \autoref{sec:connection}. 
Since the spray is independent of the auxiliary connection $\hat\nabla$, one may proceed as in the proof of \autoref{thm:wellposed}.
\end{proof}

The following theorem shows that (scale-invariant) fractional-order Sobolev metrics satisfy the conditions in \autoref{sec:conditions}.
This implies local well-posedness of their geodesic equations by \autoref{thm:wellposed}.
Further metrics considered in the literature include curvature weighted metrics and the so-called general elastic metric \cite{jermyn2017}, which can also be formulated in the present framework \cite{bauer2012sobolev}.
The proof takes advantage of the fact that the adjoint in the geodesic equation \ref{thm:geodesic} has been split into normal  and tangential parts.
The normal part has the correct Sobolev regularity thanks to \autoref{lem:variation}.
The tangential part incurs a loss of derivatives, but the bad terms cancel out with some other terms in the geodesic equation as shown in part \ref{thm:geodesic:a} of the proof of  \autoref{thm:geodesic}.

\begin{theorem}
\label{thm:satisfies}
The following operators satisfy the conditions in \autoref{sec:conditions} with $\al=\om$ for any $p \in [1,\infty)$ and $r_0 \in (\operatorname{dim}(M)/2+2,\infty)\cap[p+1,\infty)$:
\begin{align*}
P_f:=\big(1+\De^{f^*\g}\big)^p, 
\qquad
\text{and}
\qquad
P_f:=\Big(\Vol^{-1-\tfrac{2}{\on{dim}M}}+\Vol^{-1}\De^{f^*\g}\Big)^p.
\end{align*}
Thus, the geodesic equations of these metrics are well posed in the sense of \autoref{thm:wellposed}.
\end{theorem}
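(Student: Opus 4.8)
The plan is to verify conditions \ref{sec:conditions:a}--\ref{sec:conditions:d} for each operator, with essentially all of the work concentrated in \ref{sec:conditions:d}. I begin with $P_f=(1+\De^{f^*\g})^p$. Condition \ref{sec:conditions:a} is \autoref{lem:dependence}.\ref{lem:dependence:e} applied with $s=r$; the index constraints $r,r-2p\in[1-r,r]$ reduce to $r\geq p+\tfrac12$, which holds since $r\geq r_0\geq p+1$. For \ref{sec:conditions:b} I would invoke the naturality of the Bochner Laplacian under pullback, $\De^{(f\circ\ph)^*\g}(h\circ\ph)=(\De^{f^*\g}h)\circ\ph$ for $\ph\in\Diff(M)$; since pullback by $\ph$ is an isometry of the underlying $H^0$ structures it commutes with the functional calculus, so $(P_fh)\circ\ph=P_{f\circ\ph}(h\circ\ph)$. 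Condition \ref{sec:conditions:c} holds because $1+\De^{f^*\g}$ is self-adjoint and bounded below by $1$ on $H^0(f^*\g)$, whence its real power $p\geq 1$ is again self-adjoint and nonnegative by the spectral theorem.

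The heart of the proof is condition \ref{sec:conditions:d}, and here I would rely on \autoref{lem:variation}.\ref{lem:variation:c}, whose hypotheses $r>\dim(M)/2+2$ and $p\in[1,r-1]$ are exactly guaranteed by the range of $r_0$. It provides that
\begin{equation*}
H^{2p-r}_{\Imm^r}(M,TN)\ni m\mapsto \nabla_{m^\bot}P\in L\big(H^r_{\Imm^r}(M,TN),H^{1-r}_{\Imm^r}(M,TN)\big)
\end{equation*}
is $C^\al$ and, being a covariant derivative, fibrewise linear in $m$. Hence for fixed $f$ and $h,k\in H^r$ the assignment $m\mapsto\int_M\g\big((\nabla_{m^\bot}P)h,k\big)\vol^{f^*\g}$ is a bounded linear functional on $m\in H^{2p-r}$: indeed $(\nabla_{m^\bot}P)h\in H^{1-r}$ pairs with $k\in H^r\subset H^{r-1}$, with bound controlled by $\|h\|_{H^r}\|k\|_{H^r}$. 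Since the $H^0(f^*\g)$ pairing is a perfect duality between sections of class $H^{2p-r}$ and of class $H^{r-2p}$ of $f^*TN$, this functional is represented by a unique $\Adj(\nabla P)^\bot(h,k)\in H^{r-2p}$, bilinear and bounded in $(h,k)$, which is the regularity demanded by \ref{sec:conditions:d}. This is precisely where restricting to the \emph{normal} direction $m^\bot$ is essential: a general variation direction loses one derivative already at the level of the pull-back metric (\autoref{lem:variation}.\ref{lem:variation:a} versus \ref{lem:variation:b}) and would yield an adjoint of insufficient regularity, but those tangential contributions are dealt with separately inside the geodesic equation, cf.\@ part \ref{thm:geodesic:a} of the proof of \autoref{thm:geodesic}.

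The step I expect to be the main obstacle is the $C^\al$ dependence of $\Adj(\nabla P)^\bot$ on the footpoint $f$, since the adjoint is taken with respect to the $H^0(f^*\g)$ pairing, which itself varies with $f$ through $\vol^{f^*\g}$. I would phrase the representation intrinsically: over $\Imm^r(M,N)$ the map $B\mapsto\int_M\g(\cdot,B)\vol^{f^*\g}$ is a $C^\al$ bundle morphism from $H^{r-2p}_{\Imm^r}(M,TN)$ into the dual of $H^{2p-r}_{\Imm^r}(M,TN)$, because $\vol^{f^*\g}$ depends real analytically on $f$ by \autoref{lem:dependence}.\ref{lem:dependence:c} (and $\g$ is fixed); it is moreover a fibrewise isomorphism since the pairing is uniformly nondegenerate for $f^*\g$ in a neighbourhood. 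Its inverse is therefore $C^\al$, and composing it with the $C^\al$ functional from the previous paragraph shows that $\Adj(\nabla P)^\bot$ is a $C^\al$ section of $L^2(H^r,H^r;H^{r-2p})$, using the module properties~\ref{thm:module} to control the bilinear dependence on $(h,k)$.

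Finally, for the scale-invariant operator I would first note that $f\mapsto\Vol(f^*\g)=\int_M\vol^{f^*\g}$ is real analytic and strictly positive by \autoref{lem:dependence}.\ref{lem:dependence:c}, so the scalars $a=\Vol^{-1-2/\dim M}$ and $b=\Vol^{-1}$ are real analytic and positive. Since $a\,\Id+b\,\De^{f^*\g}$ is a positive, self-adjoint, sectorial operator of the same type as $1+\De^{f^*\g}$, conditions \ref{sec:conditions:a}--\ref{sec:conditions:c} follow by the identical functional-calculus arguments, now with joint analytic dependence on $(a,b,f)$. For \ref{sec:conditions:d} the product rule produces, in addition to the term already treated, contributions from differentiating $a$ and $b$; these involve only the normal variation $D_{f,m^\bot}\Vol$, a scalar functional of $m^\bot$ of high regularity, and hence do not affect the conclusion.
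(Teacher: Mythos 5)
Your proposal is correct and follows essentially the same route as the paper: conditions \ref{sec:conditions:a}--\ref{sec:conditions:c} via \autoref{lem:dependence}, naturality of the Laplacian plus preservation of commutation relations by the functional calculus, and self-adjointness, and condition \ref{sec:conditions:d} by combining \autoref{lem:variation}.\ref{lem:variation:c} with dualization in the $H^0(g)$ pairing to land in $L^2(H^r,H^r;H^{r-2p})$. Your explicit treatment of the $f$-dependence of the duality through $\vol^{f^*\g}$ and of the extra $\Vol$-terms for the scale-invariant operator only spells out details the paper leaves implicit (it simply declares the second operator "analogous").
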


\begin{proof}
We will prove this result only for the first field of operators because the proof for the second one is analogous.
We shall check conditions \ref{sec:conditions:a}--\ref{sec:conditions:d} of \autoref{sec:conditions}. 
\begin{enumerate}[wide]
\item follows from \autoref{lem:dependence}.

\item 
$\Diff(M)$-equivariance of $(1+\De^{f^*\g})$ is well-known for smooth $f$ and follows in the general case by approximation, noting that the pull-back along a smooth diffeomorphism is a bounded linear map between Sobolev spaces of the same order of regularity \cite[Theorem~B.2]{inci2013regularity}.
As the functional calculus preserves commutation relations, this implies the $\Diff(M)$-equivariance of $(1+\De^g)^p$.

\item is well-known for smooth $f,h,k$ and follows in the general case by approximation using the continuity of $f\mapsto\langle\cdot,\cdot\rangle_{H^0(f^*\g)}$ established in \cite[Lemma 3.3]{bauer2018smooth} and the continuity of $f\mapsto P_f$.

\item 
Recall from \autoref{lem:variation} that the derivative of $P_f$ in normal directions extends to a real analytic map
\begin{multline*}
H^{2p-r}_{\Imm^r}(M,TN) \ni m \mapsto \big(h\mapsto\hat\nabla_{m^\bot}P_fh\big) 
\in L(H^r_{\Imm^r}(M,TN),H^{1-r}_{\Imm^r}(M,TN)).
\end{multline*}
Equivalently, the following map is real analytic:
\begin{multline*}
H^{r}_{\Imm^r}(M,TN) = T\Imm^r(M,N) \ni h \mapsto \big(m\mapsto\hat\nabla_{m^\bot} P_f h\big) 
\\
\in L(H^{2p-r}_{\Imm^r}(M,TN),H^{1-r}_{\Imm^r}(M,N)).
\end{multline*}
Dualization using the $H^0(g)$ duality shows that the adjoint is real analytic
\begin{equation*}
T\Imm^r(M,N) \ni h \mapsto \Adj(\hat\nabla P)(h,\cdot)^\bot \in L(H^{r-1}_{\Imm^r}(M,TN),H^{r-2p}_{\Imm^r}(M,TN)).
\end{equation*}
In particular, the adjoint is real analytic
\begin{equation*}
T\Imm^r(M,N) \ni h \mapsto \Adj(\hat\nabla P)(h,\cdot)^\bot \in L(H^r_{\Imm^r}(M,TN),H^{r-2p}_{\Imm^r}(M,TN)).
\qedhere
\end{equation*}
\end{enumerate}
\end{proof}

\begin{remark}
For Sobolev metrics of integer order $p \in \mathbb N_{>0}$, condition~\ref{sec:conditions:d} of \autoref{sec:conditions} can be verified directly by a term-by-term investigation of the following explicit formula for the normal part of the adjoint \cite[Section~8.2]{Bauer2011b}, assuming that $\hat\nabla=\nabla$ is the Levi-Civita connection of $\g$:
\begin{align*}
\Adj(\nabla P)(h,k)^\bot&=
2\sum_{i=0}^{p-1}\Tr\big(g\i \na Tf g\i \g(\nabla(1+\Delta)^{p-i-1}h,\nabla(1+\Delta)^{i}k ) \big)
\\&\qquad
+\sum_{i=0}^{p-1} \big(\nabla^*\g(\nabla(1+\Delta)^{p-i-1}h,(1+\Delta)^{i}k) \big) \Tr^g(\na Tf)\\
&\qquad+\sum_{i=0}^{p-1}\Tr^g\big(R^{\g}((1+\Delta)^{p-i-1}h,\nabla(1+\Delta)^{i}k)Tf \big)\\
&\qquad-\sum_{i=0}^{p-1}\Tr^g\big(R^{\g}(\nabla(1+\Delta)^{p-i-1}h,(1+\Delta)^{i}k)Tf \big).
\end{align*}
Here $g=f^*\g$, $\Delta=\Delta^{g}$, $\nabla=\nabla^g$, and $R^{\g}$ denotes the curvature on $(N,\g)$. 
This direct calculation is consistent with the more general argument of \autoref{thm:satisfies}.
\end{remark}

\section{Special cases}
\label{sec:special}

This section describes several applications of the general well-posedness result, \autoref{thm:wellposed}. 
First, we consider the geodesic equation of right-invariant Sobolev metrics on the diffeomorphism group $\Diff(M)$.
In Eulerian coordinates, this equation is called Euler--Arnold \cite{Ar1966} or EPDiff \cite{HoMa2005} equation and reads as 
\begin{equation*}
m_t +\nabla_u m+\g(\nabla u,m)+(\on{div} u) m=0,
\qquad  
m:= P_{\Id} u,
\qquad
u:=\varphi_t\circ\varphi^{-1}.
\end{equation*}
In Lagrangian coordinates, the equation takes the form shown in the following corollary. 
The conditions for local well-posedness in this corollary agree with the ones in \cite{BBCEK2019}, where metrics governed by a general class of pseudo-differential operators are investigated.
The proof is an application of  \autoref{thm:wellposed} to $\Diff(M)$, seen as an open subset of $\Imm(M,M)$.
Moreover, the proof extends \autoref{thm:wellposed} to lower Sobolev regularity using some cancellations which are due to the vanishing normal bundle.
The notation is as in \autoref{thm:wellposed}.

\begin{corollary}[Diffeomorphisms]
\label{cor:diffeos}
A smooth curve $\varphi \colon [0,1]\to \Diff(M)$ is a critical point of the energy functional 
\begin{equation*}
E(\varphi)
=
\frac12 \int_0^1 \int_M \g(P_\varphi \varphi_t, \varphi_t)\vol^g dt
\end{equation*}
if and only if it satisfies the geodesic equation 
\begin{align*}
\nabla_{\p_t} \varphi_t= &P_\varphi\i\Big(-T\varphi\,\g(P_\varphi \varphi_t,\nabla \varphi_t)^\sharp - (\nabla_{\varphi_t}P)\varphi_t -\Tr^g\big(\g(\nabla \varphi_t,T\varphi)\big) P_\varphi \varphi_t\Big).
\end{align*}
The geodesic equation is well-posed in the sense of~\autoref{thm:wellposed} if $P$ satisfies conditions \ref{sec:conditions:a}--\ref{sec:conditions:c} of \autoref{sec:conditions} for some  $p \in [1/2,\infty)$ and all $r\in [r_0,\infty)$ with $r_0 \in (\dim(M)/2+1,\infty)$. 
In particular, this is the case if $P=(1+\Delta)^p$ with
\begin{itemize}
\item $p \in [1,\infty)$ and $r \in (\operatorname{dim}(M)/2+1,\infty)\cap[p+1,\infty)$; or
\item $p \in [1/2,1)$ and $r \in (\operatorname{dim}(M)/2+1,\infty)\cap[p+3/2,\infty)$.
\end{itemize} 
\end{corollary}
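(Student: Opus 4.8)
The plan is to specialize \autoref{thm:geodesic} and \autoref{thm:wellposed} to the open subset $\Diff(M)\subset\Imm(M,M)$, exploiting that for $\varphi\in\Diff(M)$ each tangent map $T_x\varphi\colon T_xM\to T_{\varphi(x)}M$ is a linear isomorphism. Hence the image of $T\varphi$ is all of $TM$, the normal bundle of $\varphi$ is trivial, and every $h\in T_\varphi\Diff(M)$ decomposes as $h=T\varphi.h^\top$ with $h^\top=(T\varphi)\i h$ and $h^\bot=0$. In particular the second fundamental form $\nabla Tf$ vanishes and the normal part $\Adj(\nabla P)(h,h)^\bot$ of the adjoint vanishes identically. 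Substituting these facts into the geodesic equation of \autoref{thm:geodesic} deletes the summands $\Adj(\nabla P)(f_t,f_t)^\bot$ and $\g(P_ff_t,f_t)\,\Tr^g(\nabla Tf)$, yielding the stated equation (the factor $\tfrac12$ cancelling against the $2$ in the surviving $T\varphi\,\g(P\varphi_t,\nabla\varphi_t)^\sharp$ term); the equivalence with criticality of $E$ is then inherited verbatim from \autoref{thm:geodesic}.

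For local well-posedness I would follow the Ebin--Marsden scheme of \autoref{thm:wellposed}: denoting by $\Phi$ the right-hand side of the geodesic equation, show that $\Phi$ is a fiber-wise quadratic $C^\al$ map $T\Diff^r(M)\to T\Diff^r(M)$, conclude that the spray induced through the connector $K_*$ of \autoref{lem:connection} is $C^\al$, apply Picard--Lindel\"of to obtain a $C^\al$ flow, and pass to the uniform-in-$r$ and the smooth statements via \autoref{lem:nolossnogain}. Two simplifications specific to the diffeomorphism group enter here. First, condition \ref{sec:conditions:d} becomes vacuous, since it only constrains $\Adj(\nabla P)^\bot$, which we have just seen to vanish; this is why \ref{sec:conditions:a}--\ref{sec:conditions:c} already suffice. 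Second, the term $P\i\big(\g(Ph,h)\,\Tr^g(\nabla Tf)\big)$, whose $H^r$-regularity forced $p\geq1$ in the general immersion case, is now absent because $\nabla Tf=0$; this is precisely the cancellation that opens up the range $p\in[1/2,\infty)$.

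It then remains to check that the three surviving terms of $\Phi$ close up in $H^r$. Writing $\varphi_t\in\Ga_{H^r}(\varphi^*TM)$, hence $T\varphi,\nabla\varphi_t,g\i\in H^{r-1}$ and $P\varphi_t\in H^{r-2p}$, the pairings $T\varphi\,\g(P\varphi_t,\nabla\varphi_t)^\sharp$ and $\Tr^g(\g(\nabla\varphi_t,T\varphi))\,P\varphi_t$ land in $H^{r-2p}$ by the module properties \ref{thm:module} and are returned to $H^r$ by $P\i$, the binding inequality being $r-1\geq r-2p$, i.e.\ $p\geq1/2$. The delicate term is $(\nabla_{\varphi_t}P)\varphi_t$. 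Since every direction is tangential, the variation formula from part \ref{thm:geodesic:a} of the proof of \autoref{thm:geodesic} gives
\begin{equation*}
(\nabla_{\varphi_t}P)\varphi_t=\nabla_{\varphi_t^\top}(P\varphi_t)-P\big(\nabla_{\varphi_t^\top}\varphi_t\big)=[\nabla_{\varphi_t^\top},P]\varphi_t,\qquad \varphi_t^\top=(T\varphi)\i\varphi_t\in\X_{H^{r-1}}(M).
\end{equation*}
Estimating the two summands separately loses a derivative, but their difference is the commutator $[\nabla_{\varphi_t^\top},P]$, which is of order $2p$ rather than $2p+1$; equivalently, differentiating condition \ref{sec:conditions:a} exhibits $\nabla P$ as a $C^\al$ section of $L(T\Diff^r(M),L(H^r_{\Diff^r}(M,TM),H^{r-2p}_{\Diff^r}(M,TM)))$, so that $(\nabla_{\varphi_t}P)\varphi_t\in H^{r-2p}$ and $P\i$ brings it back to $H^r$. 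Controlling this apparent derivative loss is the main obstacle of the argument.

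Finally, for the concrete field $P_\varphi=(1+\De^{\varphi^*\g})^p$ I would verify \ref{sec:conditions:a}--\ref{sec:conditions:c} exactly as in \autoref{thm:satisfies}: condition \ref{sec:conditions:a} from \autoref{lem:dependence}.\ref{lem:dependence:e}; condition \ref{sec:conditions:b} from $\Diff(M)$-equivariance of $1+\De$ together with the fact that the functional calculus preserves commutation relations; and condition \ref{sec:conditions:c} from nonnegativity and symmetry by approximation. The split at $p=1$ mirrors the threshold in \autoref{lem:variation}.\ref{lem:variation:c} and arises because the diffeomorphism case must avoid the lossy normal-variation estimate there (available only for $r>\dim(M)/2+2$ and $p\geq1$): the order-$2p$ regularity of $[\nabla_{\varphi_t^\top},P]\varphi_t$ must instead be realized directly from \autoref{lem:dependence} with the coefficient $\varphi_t^\top$ of regularity only $H^{r-1}$, and tracking these Sobolev orders yields $r\geq p+1$ for $p\in[1,\infty)$ and $r\geq p+3/2$ for $p\in[1/2,1)$.
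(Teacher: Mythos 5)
Your derivation of the geodesic equation and your reduction of well-posedness to conditions \ref{sec:conditions:a}--\ref{sec:conditions:c} with $p\ge 1/2$ match the paper's argument: the normal bundle of a diffeomorphism is trivial, so $\Adj(\nabla P)^\bot$ and the second fundamental form $\nabla Tf$ vanish, condition \ref{sec:conditions:d} becomes vacuous, the only term that forced $p\ge 1$ in \autoref{thm:wellposed} disappears, and the remaining term-by-term bookkeeping (binding inequality $r-1\ge r-2p$, and $(\nabla_{\varphi_t}P)\varphi_t\in H^{r-2p}$ directly from condition \ref{sec:conditions:a}) is exactly the intended argument.

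The gap is in the final step, the verification of condition \ref{sec:conditions:a} for the concrete operator $P=(1+\Delta)^p$ with $p\in[1/2,1)$. You assert that ``tracking these Sobolev orders yields \dots\ $r\ge p+3/2$ for $p\in[1/2,1)$'' but never produce that tracking, and the reason you offer (the regularity $H^{r-1}$ of the coefficient $\varphi_t^\top$ in the commutator $[\nabla_{\varphi_t^\top},P]$) is not where the threshold comes from. The real obstruction is that the real analyticity of $\varphi\mapsto(1+\Delta^{\varphi^*\g})^p$ as a section of $GL(H^r_{\Diff^r}(M,TM),H^{r-2p}_{\Diff^r}(M,TM))$ is only established by \autoref{lem:dependence}/\autoref{thm:satisfies} for exponents at least $1$. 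The paper handles $p\in[1/2,1)$ by factoring
\begin{equation*}
P_\varphi=(1+\Delta^{\varphi^*\g})^{-1}\,(1+\Delta^{\varphi^*\g})^{1+p},
\end{equation*}
applying \autoref{lem:dependence}.\ref{lem:dependence:e} to the factor with exponent $1+p\ge 1$, which maps $H^r\to H^{r-2p-2}$ and requires the intermediate space to remain in the admissible range, i.e.\ $r-2p-2\ge 1-r$, which is precisely $r\ge p+3/2$; the factor $(1+\Delta^{\varphi^*\g})^{-1}$ then maps $H^{r-2p-2}\to H^{r-2p}$ under weaker conditions, and the chain rule gives real analyticity of $P_\varphi$. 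Without this (or an equivalent) device your proposal does not establish condition \ref{sec:conditions:a} in the fractional range $p\in[1/2,1)$, and the stated threshold $r\ge p+3/2$ remains unjustified.
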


\begin{proof}
The formula for the geodesic equation follows from \autoref{thm:geodesic} because the terms $\Adj(\nabla P)^\bot$ and $\nabla Tf=(\nabla Tf)^\bot$ vanish.
To show well-posedness of the geodesic equation, 
note that condition~\ref{sec:conditions:d} of \autoref{sec:conditions} is trivially satisfied because $\Adj(\nabla P)^\bot$ vanishes.
Moreover, note that the condition $p \in [1,\infty)$ in \autoref{thm:wellposed} can be replaced by the weaker condition $p \in [1/2,\infty)$ because the term $\nabla Tf$, which is of second order in $f$, vanishes. 
This can be seen by a term-by-term investigation of the right-hand side of the geodesic equation as in the proof of \autoref{thm:wellposed}.
Therefore, the geodesic equation is well-posed for any operator field $P$ satisfying conditions \ref{sec:conditions:a}--\ref{sec:conditions:c} of \autoref{sec:conditions} for some  $p \in [1/2,\infty)$ and all $r\in [r_0,\infty)$ with $r_0 \in (\dim(M)/2+1,\infty)$, as claimed. 

It remains to verify these conditions for the specific operator $P=(1+\Delta)^p$. 
Condition~\ref{sec:conditions:a} for $p\geq 1$ follows from \autoref{lem:dependence},
and condition~\ref{sec:conditions:a} for $p\in[1/2,1)$ is verified as follows.
We split the operator $P_\varphi$ in two components, 
$$P_\varphi=(1+\De^{\varphi^*\g})^{-1}(1+\De^{\varphi^*\g})^{1+p}.$$
As $1+p\geq 1$, \autoref{lem:dependence} shows that the operator $(1+\De^{\varphi^*\g})^{1+p}$ is a real analytic section of the bundle
$$GL(H^r_{\Diff^r}(M,TM),H^{r-2p-2}_{\Diff^r}(M,TM)) \to \Diff^r(M)$$
for any $r$ such that $r-2p-2\geq 1-r$, i.e., $r\geq p+3/2$.
Similarly, under even weaker conditions, the operator $(1+\De^{\varphi^*\g})^{-1}$ is a real analytic section of the bundle 
$$GL(H^{r-2p-2}_{\Diff^r}(M,TM),H^{r-2p}_{\Diff^r}(M,TM)) \to \Diff^r(M).$$
By the chain rule, the operator $P_\varphi$ is real analytic as required in condition~\ref{sec:conditions:a}.
Conditions~\ref{sec:conditions:b} and \ref{sec:conditions:c} can be verified as in the proof of~\autoref{thm:satisfies}.
\end{proof}

Next we consider reparametrization-invariant Sobolev metrics on spaces of immersed curves, i.e., we consider the special case $M=S^1$. 
Our interest in these spaces stems from their fundamental role in the field of mathematical shape analysis; see e.g.~\cite{bauer2014overview,younes1998computable,klassen2004analysis,sundaramoorthi2011new,bauer2017numerical}
for $\mathbb R^n$-valued curves and \cite{le2017computing,su2014statistical,celledoni2016shape,su2018comparing} for manifold-valued curves. 
For curves in $\mathbb R^n$ local well-posedness of the geodesic equation for integer-order metrics has been shown in \cite{michor2007overview}. 
This has recently been extended to fractional-order metrics in \cite{bauer2018fractional}. 
The following corollary of our main result further generalizes this to fractional-order metrics on spaces of manifold-valued curves:

\begin{corollary}[Curves]
\label{cor:curves}
A smooth curve $c \colon [0,1]\to \Imm(S^1,N)$ is a critical point of the energy functional 
\begin{equation*}
E(c)
=
\frac12 \int_0^1 \int_M \g(P_c c_t, c_t)|\partial_{\theta} c| d\theta dt
\end{equation*}
if and only if it satisfies the geodesic equation 
\begin{align*}
\nabla_{\p_t} c_t= &\frac12P_c\i\Big(\Adj(\nabla P)_c(c_t,c_t)^\bot-2\,\g(P_c c_t,\nabla_{\partial_s} c_t) v_c
-\g(P_c c_t,c_t)\,H_c \Big)
\\&
-P_c\i\Big((\nabla_{c_t}P)c_t+\big(\g(\nabla_{\partial_s} c_{t},v_c)\big) P_c c_t\Big)\;,
\end{align*}
where $\partial_s=|c_{\theta}|^{-1}\partial_{\theta}$ denotes the normalization of the coordinate vector field $\partial_{\theta}$, 
$v_c={\partial_s}c$ the unit-length tangent vector,
and $H_c=(\nabla_{\partial_s} v_c)^{\bot}$ the vector-valued curvature of $c$. 

If the operator $P$ satisfies the conditions of \autoref{sec:conditions} for some  $p \in [1,\infty)$ and all $r\in [r_0,\infty)$ with $r_0 \in (\dim(M)/2+1,\infty)$, then the geodesic equation is well-posed in the sense of~\autoref{thm:wellposed}. This is in particular the case for the operator 
$P=(1-\nabla_{\partial_s}\nabla_{\partial_s})^p$ if $p \in [1,\infty)$ and $r \in (\operatorname{dim}(M)/2+1,\infty)\cap[p+1,\infty)$.
\end{corollary}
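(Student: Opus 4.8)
The plan is to derive both parts of the corollary by specializing the general theory to the one-dimensional base $M=S^1$.

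\textbf{Geodesic equation.} I would begin from the equation in \autoref{thm:geodesic} and simplify each term using $\dim(M)=1$. Writing $g=c^*\g$ for the pull-back metric, the field $\partial_s=|c_\theta|^{-1}\partial_\theta$ is a global $g$-orthonormal frame, so that $\Tr^g(T)=T(\partial_s,\partial_s)$ for any $2$-tensor $T$, the sharp operator acts by $\al^\sharp=\al(\partial_s)\partial_s$, and $Tc.\partial_s=v_c$. Consequently $-2Tc\,\g(P_cc_t,\nabla c_t)^\sharp$ collapses to $-2\g(P_cc_t,\nabla_{\partial_s}c_t)v_c$ and $\Tr^g(\g(\nabla c_t,Tc))$ to $\g(\nabla_{\partial_s}c_t,v_c)$. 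The only nontrivial identification is $\Tr^g(\nabla Tc)=\nabla_{\partial_s}v_c=H_c$: I would expand $\Tr^g(\nabla Tc)=\nabla_{\partial_s}(Tc.\partial_s)-Tc.\nabla^g_{\partial_s}\partial_s$ and use that arc length is a geodesic parametrization of $(S^1,g)$, i.e.\ $\nabla^g_{\partial_s}\partial_s=0$, whence the trace equals $\nabla_{\partial_s}v_c$, which is automatically normal because $\g(v_c,v_c)=1$. Substituting these identities into \autoref{thm:geodesic}, and noting that $\vol^g=|\partial_\theta c|\,d\theta$, yields the displayed equation, while the characterization of geodesics as critical points is inherited verbatim from \autoref{thm:geodesic}.

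\textbf{Well-posedness.} The first assertion is simply \autoref{thm:wellposed}: once conditions \ref{sec:conditions:a}--\ref{sec:conditions:d} hold with $p\geq 1$, local well-posedness follows. It remains to verify these conditions for $P=(1-\nabla_{\partial_s}\nabla_{\partial_s})^p$. The key observation, again a consequence of $\nabla^g_{\partial_s}\partial_s=0$, is that on a one-dimensional base the Bochner Laplacian of $g=c^*\g$ acts as $\Delta^{c^*\g}=-\nabla_{\partial_s}\nabla_{\partial_s}$, so that $P=(1+\Delta^{c^*\g})^p$ is exactly the fractional Laplacian of \autoref{thm:satisfies}. Conditions \ref{sec:conditions:a}--\ref{sec:conditions:c} then follow as in \autoref{thm:satisfies}, respectively from \autoref{lem:dependence}, from $\Diff(M)$-equivariance of $1+\Delta$ together with the functional calculus, and from approximation by smooth immersions.

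\textbf{Main obstacle.} The delicate point is the Sobolev threshold: a verbatim appeal to \autoref{thm:satisfies} establishes condition \ref{sec:conditions:d} only for $r_0\in(\dim(M)/2+2,\infty)$, whereas the corollary claims the larger range $r_0\in(\dim(M)/2+1,\infty)\cap[p+1,\infty)$. The extra derivative is lost in \autoref{lem:variation}.\ref{lem:variation:c}, which is designed for very-low-regularity perturbations $m\in H^{2p-r}$ and therefore forces the second fundamental form $\nabla Tc$, of Sobolev class $H^{r-2}$, to act as a standalone multiplier, requiring $r-2>\dim(M)/2$ via \autoref{thm:module}. For $M=S^1$ I would instead verify condition \ref{sec:conditions:d} by a direct bilinear estimate on $\Adj(\nabla P)^\bot\in L^2(H^r,H^r;H^{r-2p})$, using that here the arguments $h,k$ carry the full regularity $H^r$ and that in the adjoint the factor $\nabla Tc$ always appears paired with a comparatively regular quantity of the form $\g\big(\nabla(1+\Delta^{c^*\g})^{p-i-1}h,\nabla(1+\Delta^{c^*\g})^{i}k\big)$, of order $r-2p+1$. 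This is explicit for integer $p$, as in the remark following \autoref{thm:satisfies}; for fractional $p$ one runs the corresponding functional-calculus estimate at the regularity level $H^r$ rather than through the lossy \autoref{lem:variation}.\ref{lem:variation:c}. With $\dim(M)=1$ the module condition of \autoref{thm:module} for a product of sections of orders $r-2$ and $r-2p+1$ mapping into order $r-2p$ then reduces to $r-1>1/2$, i.e.\ $r>\dim(M)/2+1$. Carrying out this borderline bookkeeping cleanly, and in particular handling the fractional powers without the derivative loss of \autoref{lem:variation}.\ref{lem:variation:c}, is the part of the argument requiring genuine care; everything else is formal specialization.
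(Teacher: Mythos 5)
Your proposal follows the same route as the paper, whose entire proof of this corollary is the single sentence that it follows from \autoref{thm:geodesic}, \autoref{thm:satisfies}, and \autoref{thm:wellposed}; your explicit specializations (orthonormality of $\partial_s$ for $g=c^*\g$, the identity $\Tr^g(\nabla Tc)=\nabla_{\partial_s}v_c=H_c$ via $\nabla^g_{\partial_s}\partial_s=0$, and $\De^{c^*\g}=-\nabla_{\partial_s}\nabla_{\partial_s}$ so that $P=(1+\De^{c^*\g})^p$) are exactly the computations the paper leaves implicit, and they are all correct. Where you go beyond the paper is the threshold discussion, and the discrepancy you flag is real: \autoref{thm:satisfies} supplies the conditions of \autoref{sec:conditions} only for $r_0\in(\dim(M)/2+2,\infty)\cap[p+1,\infty)$, which for $M=S^1$ covers the corollary's claimed range $[p+1,\infty)$ only when $p\ge 3/2$; for $p\in[1,3/2)$ the values $r\in[p+1,5/2]$ are not literally covered, and the paper's one-line citation passes over this. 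Your diagnosis of where the derivative is lost (condition \ref{sec:conditions:d} is verified through \autoref{lem:variation}, where $\nabla Tc\in H^{r-2}$ must act as a standalone multiplier, forcing $r-2>\dim(M)/2$) is accurate. However, your sketched repair does not yet close the gap: in the explicit formula for $\Adj(\nabla P)^\bot$ given in the remark after \autoref{thm:satisfies}, the term $\big(\nabla^*\g(\nabla(1+\De)^{p-i-1}h,(1+\De)^{i}k)\big)\Tr^g(\nabla Tf)$ pairs $\nabla Tc\in H^{r-2}$ with a factor of order $H^{r-2p}$ rather than $H^{r-2p+1}$, and \autoref{thm:module} applied to that product into $H^{r-2p}$ still requires $r-2>\dim(M)/2$ independently of $p$; closing this would require moving $\nabla^*$ by parts onto the curvature factor or a genuinely sharper bilinear estimate, neither of which you carry out (nor does the paper). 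So for $p\ge 3/2$ your argument is complete and identical in substance to the paper's; below that threshold it is an honestly flagged but unfinished sketch of a claim that the paper's own proof does not justify either.
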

\begin{proof}
This follows directly from~\autoref{thm:geodesic}, \autoref{thm:satisfies} and \autoref{thm:wellposed}.
\end{proof}

The last special case to be discussed in this section is $N=\mathbb R^n$, which includes in particular the space of surfaces in $\mathbb R^3$. In the article~\cite{Bauer2011b} we proved a local well-posedness result for integer-order metrics. The proof given there had a gap, which has been corrected in the article~\cite{Mueller2017}. The following corollary of our main result extends this to fractional order metrics:

\begin{corollary}[Flat ambient space]
\label{cor:flat}
A smooth curve $f \colon [0,1]\to \Imm(M,\mathbb R^n)$ is a critical point of the energy functional 
\begin{equation*}
E(f)
=
\frac12 \int_0^1 \int_M \langle P_f f_t, f_t\rangle \vol^g dt
\end{equation*}
if and only if it satisfies the geodesic equation 
\begin{align*}
 f_{tt}= &\frac12P_f\i\Big(\Adj(dP)_f(f_t,f_t)^\bot-2df\,\langle P_f f_t, df_t\rangle^\sharp
-\langle P_f f_t,f_t\rangle\,H_f\Big)
\\&
-P_f\i\Big((\nabla_{f_t}P)f_t+\Tr^g\big(\langle d f_t,df\rangle\big) P_ff_t\Big),
\end{align*}
where $\langle\cdot,\cdot\rangle$ denotes the Euclidean scalar product on $\mathbb R^n$, $g=f^*\langle\cdot,\cdot\rangle$ the induced pullback metric on $M$, and   $H_f=\Tr^g(d^2f)^{\bot}$ the vector-valued mean curvature of $f$. 

If the operator $P$ satisfies the conditions of \autoref{sec:conditions} for some  $p \in [1,\infty)$ and all $r\in [r_0,\infty)$ with $r_0 \in (\dim(M)/2+1,\infty)$, then the geodesic equation is well-posed in the sense of~\autoref{thm:wellposed}. This is in particular the case for the operator 
$P=(1+\Delta)^p$ with $p \in [1,\infty)$ and $r \in (\operatorname{dim}(M)/2+1,\infty)\cap[p+1,\infty)$.
\end{corollary}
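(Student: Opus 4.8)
The plan is to read off the geodesic equation as the special case $N=\mathbb{R}^n$ of \autoref{thm:geodesic}, and then to deduce well-posedness from \autoref{thm:wellposed} after checking the conditions of \autoref{sec:conditions} for $P=(1+\Delta)^p$. The crucial simplification is that $\Imm(M,\mathbb{R}^n)$ is an open subset of the Hilbert space $H^r(M,\mathbb{R}^n)$, on which the canonical flat connection of $\mathbb{R}^n$ induces, via the functor of \autoref{lem:connection}, the trivial connection. Hence $\nabla_{\partial_t}f_t=f_{tt}$, every covariant derivative $\nabla$ on the mapping space collapses to the ordinary derivative $d$, the pull-back metric is $g=\langle df,df\rangle$, and the ambient curvature $R^{\g}$ vanishes. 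First I would substitute these identities into the geodesic equation of \autoref{thm:geodesic}. Under $\g\mapsto\langle\cdot,\cdot\rangle$ and $Tf\mapsto df$ the second fundamental form $\nabla Tf$ becomes the vector-valued Hessian, which is purely normal, so its trace $\Tr^g(\nabla Tf)$ is exactly the mean curvature $H_f=\Tr^g(d^2f)^\bot$; this reproduces the displayed right-hand side term by term.

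For well-posedness, the first assertion is a direct instance of \autoref{thm:wellposed}: as soon as $P$ satisfies conditions \ref{sec:conditions:a}--\ref{sec:conditions:d} for some $p\geq 1$ and all $r\geq r_0>\dim(M)/2+1$, the geodesic equation is locally well posed in the sense of Hadamard, uniformly in $r$ and including the smooth Fréchet limit. For the explicit operator $P=(1+\Delta)^p$ I would verify these conditions as in \autoref{thm:satisfies}. Conditions \ref{sec:conditions:a}--\ref{sec:conditions:c} (real-analytic dependence on $f$ through \autoref{lem:dependence}, $\Diff(M)$-equivariance, and symmetry and nonnegativity with respect to the $H^0(g)$ inner product) are insensitive to the ambient geometry and carry over verbatim.

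The step I expect to be the main obstacle is condition \ref{sec:conditions:d}: the real-analytic regularity of the normal part $\Adj(\nabla P)^\bot$ of the adjoint, and specifically obtaining it at the reduced threshold $r>\dim(M)/2+1$ rather than the $r>\dim(M)/2+2$ required by the general \autoref{thm:satisfies}. This is where I would use flatness: since $R^{\g}=0$, the two curvature summands in the explicit formula for $\Adj(\nabla P)^\bot$ (the Remark following \autoref{thm:satisfies}) vanish, leaving only contractions of the form $\langle\nabla(1+\Delta)^{p-i-1}h,\nabla(1+\Delta)^{i}k\rangle$ and their pairings against the second fundamental form. A term-by-term count with the module properties of \autoref{thm:module} shows that each such contraction already lands in $H^{r-2p+1}$ as soon as $r>\dim(M)/2+1$, so that after multiplication by the $H^{r-2}$-regular second fundamental form it still takes values in the required space $H^{r-2p}$. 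The binding term is the one in which $\nabla^*$ of such a contraction is multiplied by the mean curvature $H_f\in H^{r-2}$; I expect the careful bookkeeping of this summand to be the crux, since it is here that the reduced threshold $r>\dim(M)/2+1$ is either confirmed or forced up to $\dim(M)/2+2$. I would settle it by specializing the functional-calculus estimates \ref{lem:variation:c'}--\ref{lem:variation:c''} to the flat setting, where $\hat\nabla=\nabla=d$ and no auxiliary connection is needed, and by checking real analyticity of the resulting section of the bundle of bilinear maps.
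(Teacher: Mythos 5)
Your proposal is correct and takes essentially the same route as the paper: the paper's own proof of this corollary is exactly the one-line reduction you describe, namely citing Theorems~\ref{thm:geodesic}, \ref{thm:wellposed}, and \ref{thm:satisfies} with $N=\mathbb R^n$ and observing that the covariant derivative on $\mathbb R^n$ and the induced covariant derivative on $\Imm^r(M,\mathbb R^n)$ coincide with ordinary derivatives, so that $\nabla_{\p_t}f_t=f_{tt}$ and $\Tr^g(\nabla Tf)$ becomes $H_f=\Tr^g(d^2f)^\bot$.

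One comment on the part you single out as the crux. The paper's proof is completely silent on the threshold issue you raise: it justifies the explicit operator $P=(1+\De)^p$ only by citing \autoref{thm:satisfies}, which requires $r_0>\dim(M)/2+2$, while the corollary states $r>\dim(M)/2+1$; the paper offers no argument for the weaker threshold. Your proposed resolution via vanishing of the curvature summands in the explicit formula for $\Adj(\nabla P)^\bot$ applies only to integer $p$ (that formula is a finite sum over $i=0,\dots,p-1$), and as you yourself note, the term $\big(\nabla^*\langle\nabla(1+\De)^{p-i-1}h,(1+\De)^{i}k\rangle\big)\Tr^g(\nabla Tf)$ is a product of an $H^{r-2p}$ and an $H^{r-2}$ object, which by \autoref{thm:module} still forces $r-2>\dim(M)/2$. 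For fractional $p$, the route through \autoref{lem:variation} likewise still needs $r>\dim(M)/2+2$, because the loss of two derivatives there comes from the second fundamental form $\nabla Tf\in H^{r-2}$ in the normal variation of the pull-back metric, which does not disappear in flat ambient space. So your instinct that this is the delicate point is right, but neither your sketch nor the paper's proof actually closes it; this is an issue with the corollary as stated rather than with your argument, and the safe reading is the threshold of \autoref{thm:satisfies}.
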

\begin{proof}
This follows from Theorems~\ref{thm:geodesic}, \ref{thm:wellposed}, and \ref{thm:satisfies} with $N=\mathbb R^n$, noting that the covariant derivative on $\mathbb R^n$ and the induced covariant derivative on $\Imm^r(M,\mathbb R^n)$ coincide with ordinary derivatives.
\end{proof}

\appendix
\section{The push-forward operator on Sobolev spaces}
\begin{theorem}[Smooth curves in convenient vector spaces] {\rm \cite[~4.1.19]{FK88}}
\label{thm:FK}
Let $c\colon\mathbb R\to E$ be a curve in a convenient vector space $E$. Let 
$\mathcal{V}\subset E'$ be a subset of bounded linear functionals such that 
the bornology of $E$ has a basis of $\sigma(E,\mathcal{V})$-closed sets. 
Then the following are equivalent:
\begin{enumerate}
\item $c$ is smooth
\item For each $k\in\mathbb N$ there exists a locally bounded curve $c^{k}\colon\mathbb R\to E$ such that for each $\ell\in\mathcal V$ the function $\ell\circ c$ is smooth $\mathbb R\to \mathbb R$ with $(\ell\circ c)^{(k)}=\ell\circ c^{k}$. 
\end{enumerate}
If $E$ is reflexive, then for any point separating subset
$\mathcal{V}\subset E'$ the bornology of $E$ has a basis of 
$\si(E,\mathcal{V})$-closed subsets, by {\rm \cite[~4.1.23]{FK88}}.
\end{theorem}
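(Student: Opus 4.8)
The plan is to work in the convenient setting (difference quotients, Mackey convergence, and the fact that a convenient bornology has a basis of closed absolutely convex bounded sets) and to treat the two implications separately, the real content being $(b)\Rightarrow(a)$. The implication $(a)\Rightarrow(b)$ is immediate: if $c$ is smooth then its iterated derivatives $c^{(k)}$ exist and are smooth, hence locally bounded; since every $\ell\in\mathcal V\subset E'$ is bounded linear and therefore smooth, $\ell\circ c$ is smooth with $(\ell\circ c)^{(k)}=\ell\circ c^{(k)}$, so one takes $c^{k}:=c^{(k)}$.

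For $(b)\Rightarrow(a)$ I would first prove a single differentiation step and then iterate. Fix $t$ and consider the difference quotient $\de_s:=\big(c(t+s)-c(t)\big)/s$. Testing against $\ell\in\mathcal V$ and using $(\ell\circ c)'=\ell\circ c^1$ gives $\ell(\de_s)\to\ell(c^1(t))$, i.e.\ $\de_s\to c^1(t)$ in $\si(E,\mathcal V)$. The task is to upgrade this to Mackey convergence, which is what convenient smoothness requires. To this end I examine the genuine element $w_s:=\big(c(t+s)-c(t)-s\,c^1(t)\big)/s^2\in E$. Applying the scalar Taylor formula with integral remainder to $\ell\circ c$ (which is smooth with second derivative $\ell\circ c^2$) yields, for every $\ell\in\mathcal V$,
\[
\ell(2w_s)=\frac{2}{s^2}\int_0^s (s-\tau)\,\ell\big(c^2(t+\tau)\big)\,d\tau .
\]
The weights $\tfrac{2}{s^2}(s-\tau)$ are nonnegative with total mass $1$, so approximating this scalar integral by Riemann sums produces convex combinations $r_n$ of points of $\{c^2(\xi):|\xi-t|\le\ep\}$, which are fixed elements of $E$ \emph{independent of} $\ell$, with $\ell(r_n)\to\ell(2w_s)$ for all $\ell\in\mathcal V$. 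Hence $r_n\to 2w_s$ in $\si(E,\mathcal V)$, exhibiting $2w_s$ as a weak limit of points of the $\si(E,\mathcal V)$-closed absolutely convex hull $A$ of the set $\{c^2(\xi):|\xi-t|\le\ep\}$.

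This is exactly where the hypothesis on the bornology enters: by assumption $A$ is contained in a $\si(E,\mathcal V)$-closed bounded set from the basis, so $A$ is itself bounded and $\si(E,\mathcal V)$-closed, whence $2w_s\in A$. Consequently $\de_s-c^1(t)=s\,w_s\in\tfrac{s}{2}A$, a fixed bounded set scaled by $s$, which is precisely Mackey convergence $\de_s\to c^1(t)$. Thus $c$ is differentiable with $c'=c^1$. The curve $c^1$ is locally bounded and inherits the hypotheses with candidate derivatives $c^{1,k}=c^{k+1}$, since $(\ell\circ c^1)^{(k)}=(\ell\circ c)^{(k+1)}=\ell\circ c^{k+1}$; so the same step gives $(c^1)'=c^2$, and induction shows $c$ is smooth with $c^{(k)}=c^{k}$.

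I expect the bornological (as opposed to merely weak) boundedness of the remainder quotients $w_s$ to be the main obstacle: weak boundedness is automatic from the convergence above, and the whole point of the $\si(E,\mathcal V)$-closedness of a bornology basis is to convert this into honest boundedness in $A$, without which the Mackey step fails. I would also remark that this closedness hypothesis forces $\mathcal V$ to be point-separating — if $0\ne x\in\bigcap_{\ell\in\mathcal V}\ker\ell$, then $\R x$ would lie in the $\si(E,\mathcal V)$-closure of any bounded basis set containing $0$, contradicting its boundedness — although, as the argument above shows, point-separation is not actually needed once one argues via the closed hull $A$. Finally, the reflexivity addendum is the cited result \cite[4.1.23]{FK88}: for reflexive $E$ the bounded sets are relatively $\si(E,E')$-compact, and when $\mathcal V$ separates points the coarser Hausdorff topology $\si(E,\mathcal V)$ agrees with $\si(E,E')$ on such sets, making them $\si(E,\mathcal V)$-closed and thereby furnishing the required basis.
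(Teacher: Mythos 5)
The paper itself gives no proof of this statement: it is quoted verbatim from Fr\"olicher--Kriegl \cite[4.1.19]{FK88} and used as a black box in the proof of \autoref{lem:curvesSobolev}. Your argument is correct and is essentially the argument of that reference. The only substantive direction is (b)$\Rightarrow$(a), and your key device --- the integral Taylor remainder of order two, which exhibits the rescaled remainder $2w_s$ as a $\si(E,\mathcal V)$-limit of convex combinations of values of the locally bounded curve $c^2$, hence as a point of the $\si(E,\mathcal V)$-closed absolutely convex hull $A$ of a bounded set --- is exactly where the hypothesis on the bornology is consumed: $A$ sits inside a $\si(E,\mathcal V)$-closed bounded basis set and is therefore bounded, so $\de_s-c^1(t)=s\,w_s\in\tfrac{s}{2}A$ gives Mackey convergence of the difference quotients, and the induction is immediate because $c^1$ inherits hypothesis (b) with candidates $c^{k+1}$. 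Two small points you should make explicit: the Riemann sums have weights summing only approximately to $1$, so either normalize them or observe that for fine partitions they lie in $(1+\de)\,A$ for arbitrary $\de>0$ (working with the absolutely convex hull absorbs this); and Mackey convergence implies convergence in the locally convex topology, so that $c'(t)=c^1(t)$ in the sense required by the definition of a differentiable curve. Your side remarks --- that the closedness hypothesis already forces $\mathcal V$ to separate points, and that the reflexive addendum follows from relative weak compactness of bounded sets together with the fact that a coarser Hausdorff topology agrees with a compact one --- are both correct, and consistent with the citation of \cite[4.1.23]{FK88}.
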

This theorem is surprisingly strong: 
Note that $\mathcal V$ does not need to recognize bounded sets. 
We shall use the theorem in situations where $\mathcal V$ is just the set of all point evaluations on suitable Sobolev spaces.

\begin{lemma}[Smooth curves in Sobolev spaces of sections]
\label{lem:curvesSobolev}
Let $E$ be a vector bundle over $M$, and let $\nabla$ be a connection on $E$. Then it holds for each $r\in(\dim(M)/2,\infty)$ that the space $C^\infty(\mathbb R,\Ga_{H^r}(E))$ of smooth curves in $\Ga_{H^r}(E)$ consists of all continuous mappings $c\colon\mathbb R\x M \to E$ with $p\circ c = \on{pr}_2\colon\mathbb R\x M\to M$ such that: 
\begin{itemize}
 \item For each $x\in M$ the curve $t\mapsto c(t,x)\in E_x$ is smooth; 
        let $(\p^p_t c)(t,x) = \p_t^p(c(t,x))$, and
 \item For each $p\in \mathbb N_{\ge0}$, the curve $\p_t^pc$ has values in $\Ga_{H^r}(E)$ 
       so that $\p_t^pc \colon\mathbb R\to \Ga_{H^r}(E)$, and 
       $t \mapsto \|\p_t c(t,\quad)\|_{H^r}$ is bounded, 
       locally in $t$.
\end{itemize}
\end{lemma}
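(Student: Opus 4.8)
The plan is to read off the characterization directly from \autoref{thm:FK}, choosing for the testing family $\mathcal V$ the set of point evaluations on $\Ga_{H^r}(E)$. First I would set up the identification underlying the statement: a curve $c\colon\mathbb R\to\Ga_{H^r}(E)$ corresponds to a map $\mathbb R\x M\to E$ over $\on{pr}_2$ by $(t,x)\mapsto c(t)(x)$, and because $r>\dim(M)/2$ the Sobolev embedding $\Ga_{H^r}(E)\hookrightarrow\Ga_{C^0}(E)$ makes this map jointly continuous (using compactness of $M$) and renders every point evaluation
$$\ell_{x,\xi}\colon\Ga_{H^r}(E)\to\mathbb R,\qquad \ell_{x,\xi}(s)=\xi\big(s(x)\big),\quad x\in M,\ \xi\in E_x^*,$$
a bounded linear functional. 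The family $\mathcal V=\{\ell_{x,\xi}\}$ is point separating, since a nonzero element of $\Ga_{H^r}(E)$ has a continuous representative that is nonzero at some $x$. The connection $\nabla$ plays no role beyond framing the norm, so I would not use it.

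Next I would check the standing hypotheses needed to invoke \autoref{thm:FK} with this $\mathcal V$. The $H^r$ norm comes from a scalar product, so $\Ga_{H^r}(E)$ is a Hilbert space, hence reflexive and convenient; by the reflexivity clause of \autoref{thm:FK} its bornology then has a basis of $\si(\Ga_{H^r}(E),\mathcal V)$-closed sets, which is exactly what is required. With this in place both implications reduce to a dictionary. For each $p$ put $c^p(t):=\p_t^pc(t,\,\cdot\,)$. Smoothness of $\ell_{x,\xi}\circ c$ for all $x,\xi$ says precisely that $t\mapsto c(t,x)$ is smooth for every $x$, while the relation $(\ell_{x,\xi}\circ c)^{(p)}=\ell_{x,\xi}\circ c^p$ is the pointwise identity $(\p_t^pc)(t,x)=c^p(t)(x)$. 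Hence condition~(2) of \autoref{thm:FK}---locally bounded curves $c^p$ with these derivative relations---is met exactly when each $\p_t^pc$ takes values in $\Ga_{H^r}(E)$ and is locally bounded there, which is the content of the two bullet points. Conversely a smooth curve in $\Ga_{H^r}(E)$ has all convenient derivatives $c^{(p)}$ smooth, hence locally bounded, and these are compatible with the bounded functionals $\ell_{x,\xi}$, giving the stated pointwise description.

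I expect the substantive points to be the verification of the two \autoref{thm:FK} hypotheses rather than any hard estimate: the embedding $\Ga_{H^r}(E)\hookrightarrow\Ga_{C^0}(E)$ for $r>\dim(M)/2$ (which is what makes the point evaluations bounded and supplies continuous representatives), and reflexivity of the Hilbert space $\Ga_{H^r}(E)$ (which furnishes the bornology condition). The one book-keeping subtlety is to confirm that the convenient-calculus derivative of $c$ agrees pointwise with $\p_t^pc$, i.e.\ that evaluation commutes with differentiation; this is immediate because bounded linear maps are smooth and commute with derivatives in the convenient setting, so no genuine obstacle arises here.
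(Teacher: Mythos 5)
Your proposal is correct and takes essentially the same route as the paper: both arguments invoke \autoref{thm:FK} with the point evaluations as the testing family $\mathcal V$, using that the Hilbert space $\Ga_{H^r}(E)$ is reflexive (so the bornology condition holds for any point-separating $\mathcal V$) and that $r>\dim(M)/2$ makes the evaluations bounded. The only cosmetic difference is that the paper first embeds $E$ as a direct summand of a trivial bundle and reduces to scalar-valued $H^r(M,\mathbb R)$ so that the evaluations are literally $\on{ev}_x$, whereas you pair directly with fiber covectors $\xi\in E_x^*$; both choices verify the same hypotheses of \autoref{thm:FK}.
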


\begin{proof}
To see this we first choose a second vector bundle $F\to M$ such that $E\oplus_M F$ is a trivial bundle, i.e., isomorphic to $M\x \mathbb R^n$ for some $n\in\mathbb N$. 
Then $\Ga_{H^r}(E)$ is a direct summand in $H^r(M,\mathbb R^n)$, so that we may assume without loss that $E$ is a trivial bundle, and then, that it is 1-dimensional. 
So we have to identify $C^\infty(\mathbb R,H^r(M,\mathbb R))$. 
But in this situation we can just apply Theorem \ref{thm:FK} for the set $\mathcal V\subset H^s(M,\mathbb R)'$ consisting just of all point evaluations $\on{ev}_x\colon H^r(M,\mathbb R)\to \mathbb R$.
\end{proof}

\begin{lemma}[Function spaces of mixed smoothness]
\label{lem:mixed}
Let $U$ be an open subset of a finite-dimensional vector space, 
let $r \in (\dim(M)/2,\infty)$,
let $\al\in\{\infty,\om\}$, 
and let $C^\al(U)=\varprojlim_p E_p$ be the representation of the complete locally convex space $C^\al(U)$ as a projective limit of Banach spaces $E_p$.
Then
\begin{equation*}
H^rC^\al(M\times U)
:= 
C^\al(U,H^r(M))
=
H^r(M)\hat\otimes C^\al(U)
=
H^r(M,C^\al(U)),
\end{equation*}
where $\hat\otimes$ is the injective, projective, or bornological tensor product, or any tensor product in-between,
and where $H^r(M,C^\al(U))$ is defined as the projective limit $\varprojlim_p H^r(M,E_p)$.
\end{lemma}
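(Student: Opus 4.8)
The plan is to recognize each asserted identity as an instance of the general principle that, against a nuclear space, vector-valued function spaces, completed tensor products, and projective limits all interact transparently. Two structural facts drive the argument. First, $C^\al(U)$ is nuclear for both $\al=\infty$ and $\al=\om$ (see \cite{KM97} and the references therein); hence its completed injective and projective tensor products with any complete locally convex space coincide, and so does every tensor product lying in between, including the bornological one of the convenient calculus. This already settles the freedom in the choice of $\hat\otimes$ in the statement. Second, $H^r(M)$ is a separable Hilbert space; in particular it is reflexive, it has the approximation property, and in an orthonormal basis the identity $H^r(M)\otimes_2 E=H^r(M,E)$ for a Hilbert space $E$ is immediate from Parseval. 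These are the facts that let us compute tensor products against $H^r(M)$.

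First I would prove $C^\al(U,H^r(M))=H^r(M)\hat\otimes C^\al(U)$, that is, that the $H^r(M)$-valued $C^\al$ functions are exactly the completed tensor product. For $\al=\infty$ this is the classical description of vector-valued smooth functions as $C^\infty(U)\hat\otimes_\pi F$ for complete $F$, equivalently as Schwartz's $\epsilon$-product $C^\infty(U)\,\epsilon\,F$; the two coincide because $C^\infty(U)$ is nuclear and $H^r(M)$ has the approximation property. For $\al=\om$ the analogous statement comes from the real-analytic exponential law and the linear theory of the convenient calculus \cite{KM97}: a map into $C^\al(U,H^r(M))$ is $C^\al$ precisely when the corresponding map into the tensor product is, since both are detected by composition with the point evaluations on $M$, exactly as in \autoref{thm:FK} and \autoref{lem:curvesSobolev}. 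The outcome is a (bornological) isomorphism $C^\al(U,H^r(M))\cong H^r(M)\hat\otimes C^\al(U)$.

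Next I would commute the tensor product past the projective limit defining $C^\al(U)$. Writing $C^\al(U)=\varprojlim_p E_p$ with the connecting maps nuclear, the completed $\epsilon$-tensor product against $H^r(M)$ coincides with the Schwartz $\epsilon$-product (as $H^r(M)$ has the approximation property) and therefore commutes with reduced projective limits; by nuclearity the same holds for every tensor product appearing in the statement, so that
\begin{equation*}
H^r(M)\hat\otimes C^\al(U)=H^r(M)\hat\otimes\varprojlim_p E_p=\varprojlim_p\big(H^r(M)\hat\otimes E_p\big).
\end{equation*}
It then remains to identify each $H^r(M)\hat\otimes E_p$ with the vector-valued Sobolev space $H^r(M,E_p)$. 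Factoring each connecting map through a Hilbert space, which nuclearity permits, I may assume without loss that every $E_p$ is a Hilbert space; then $H^r(M)\hat\otimes E_p$ is the Hilbert tensor product, which equals $H^r(M,E_p)$ by the Parseval identity noted above, and in the nuclear projective limit this Hilbert cross-norm agrees with $\hat\otimes_\pi=\hat\otimes_\epsilon$. Passing to the limit and invoking the definition $H^r(M,C^\al(U)):=\varprojlim_p H^r(M,E_p)$ from the statement closes the chain of identities.

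The main obstacle is the real-analytic case $\al=\om$. Unlike $C^\infty(U)$, the space $C^\om(U)$ is not Fr\'echet, its presentation as a projective limit of Banach spaces and its nuclearity are more delicate, and the exponential law must be invoked in the precise form available in the convenient setting \cite{KM97} rather than from the classical distribution-theoretic references. The second, more technical point is the compatibility of the abstract tensor norms with the analytically defined norm of $H^r(M,E_p)$: this is exactly where nuclearity is indispensable, since it forces the a priori distinct cross-norms to collapse after passing to the projective limit, so that no single isometric identification at finite level is needed.
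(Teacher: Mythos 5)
Your overall strategy coincides with the paper's: nuclearity of $C^\al(U)$ collapses all the completed tensor norms, the vector-valued function space $C^\al(U,H^r(M))$ is identified with $C^\al(U)\hat\otimes H^r(M)$, and the remaining identity is obtained by commuting the tensor product with the projective limit and recognizing $H^r(M)\hat\otimes E_p$ as $H^r(M,E_p)$ via a Hilbertian cross-norm squeezed between $\ep$ and $\pi$. Two of your steps differ in execution. For the last identity, the paper works with the natural $L^2$ tensor norm $\De_2$ of Defant--Floret and the isometry $H^r(M)\cong L^2(M)$ furnished by a power of $1+\De^g$, which handles arbitrary Banach $E_p$ directly; your reduction to Hilbertian $E_p$ is legitimate by nuclearity but tacitly requires that $\varprojlim_p H^r(M,E_p)$ be independent of the chosen Banach representation of $C^\al(U)$, since the statement fixes the $E_p$ in advance. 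That is routine (cofinality of projective spectra) but should be said.

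The one place where your argument has a real gap is the identification $C^\om(U,H^r(M))=C^\om(U)\hat\otimes H^r(M)$. Testing against the point evaluations $\on{ev}_x$, $x\in M$, in the spirit of \autoref{thm:FK} and \autoref{lem:curvesSobolev}, characterizes which maps $U\to H^r(M)$ are real analytic, i.e.\ it identifies the underlying sets of candidate elements; it does not show that the canonical map from the completed (bornological) tensor product into this function space is surjective, nor that the bornologies agree --- and for $C^\om$, unlike $C^\infty$, the topology is not generated by point evaluations, so the classical $\ep$-product argument does not apply verbatim. The paper closes exactly this step by passing to complexifications: $C^\om(U,H^r(M))=\varprojlim_{\tilde U}\mathcal H(\tilde U,H^r(M))$ over complex neighborhoods $\tilde U$, and then invoking the known tensor-product representation $\mathcal H(\tilde U,F)=\mathcal H(\tilde U)\hat\otimes F$ for vector-valued holomorphic functions (Jarchow, Corollary~16.7.5). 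Some such device --- either the holomorphic-extension argument or a genuine proof of the real-analytic exponential law in tensor-product form --- is needed to make your second paragraph complete.
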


The lemma justifies the following notation, which shall be used in \autoref{lem:pushforward} below. 
If $E_1$ and $E_2$ are vector bundles over $M$, 
and $U \subseteq E_1$ is an open neighborhood of the image of an $H^r$ section,
then we write $\Gamma_{H^r}(C^\al(U,E_2))$ for the set of all fiber-preserving functions $F\colon U \to E_2$ which have regularity $H^rC^\al$ in every $C^\al$ vector bundle chart of $E_1$.
Loosely speaking, these are sections of regularity $H^r$ in the foot point and regularity $C^\al$ in the fibers.

\begin{proof}
The space $C^\infty(U)$ is nuclear by \cite[Corollary to Theorem~51.4]{treves1967topological}, 
and the space $C^\om(U)$ is nuclear as a countable inductive limit of nuclear spaces of holomorphic functions \cite[Theorem~30.11]{KM97}.
Let $\otimes_\ep$, $\otimes_\pi$, and $\otimes_\be$ be the injective, projective, and bornological completed tensor products, respectively. 
Then 
\begin{align*}
C^\al(U) \otimes_\ep H^r(M)
=
C^\al(U) \otimes_\pi H^r(M)
=
C^\al(U) \otimes_\be H^r(M),
\end{align*}
where the first equality holds because $C^\al(U)$ is nuclear, 
and the second equality holds by \cite[Proposition~5.8]{KM97} using that $H^r(M)$ is a normed space, and $C^\omega(V)$ is an (LF)-space and therefore bornological.
Thus, all tensor spaces $C^\al(U)\hat\otimes H^r(M)$ are equal. 
Moreover, 
\begin{align*}
C^\infty(U,H^r(M))
=
C^\infty(U) \otimes_\ep H^r(M)
\end{align*}
by \cite[Theorem~44.1]{treves1967topological}, and
\begin{align*}
C^\om(U,H^r(M))
=
\varprojlim_{\tilde U} \mathcal H(\tilde U,H^r(M))
=
\varprojlim_{\tilde U} \mathcal H(\tilde U)\hat\otimes H^r(M)
=
C^\om(U) \hat\otimes H^r(M)
\end{align*}
by \cite[Corollary~16.7.5]{jarchow2012locally}, where $\mathcal H$ denotes holomorphic functions and $\tilde U$ are open neighborhoods of $U$ in the complexification of the underlying vector space.
Let $\De_2$ be the natural norm on $L^2$ functions \cite[7.1]{defant1992tensor}.
Then
\begin{equation*}
H^r(M) \hat\otimes C^\al(U) 
=
H^r(M) \hat\otimes_{\De_2} C^\al(U)  
=
\varprojlim_p H^r(M) \hat\otimes_{\De_2} E_p 
=
\varprojlim_p H^r(M, E_p),
\end{equation*}
where the first equality holds because $\ep\le\De_2\le\pi$ \cite[7.1]{defant1992tensor},
the second one by the definition of tensor products of locally convex spaces \cite[35.2]{defant1992tensor},
and the third one because the fractional Laplacian $(1+\De^g)\colon H^r(M)\to L^2(M)$ with respect to any auxiliary Riemannian metric $g \in \Met(M)$ is an isometry and because $L^2(M,E_p)=L^2(M)\otimes_{\De_2}E_p$ by the definition of $\De_2$ \cite[7.2]{defant1992tensor}. 
\end{proof}

\begin{lemma}[Push-forward of functions]
\label{lem:omega}
Let $U$ be an open subset of $\mathbb R$,
and let $r \in (\dim(M)/2,\infty)$. 
Then $H^r(M,U)$ is open in $H^r(M,\mathbb R)$, 
and the following statements hold.
\begin{enumerate}
\item 
\label{lem:omega:a}
The following map is smooth:
\begin{align*}
H^rC^\infty(M\times U) \times H^r(M,U) \ni (F,h) \mapsto F\circ (\Id_M,h) \in H^r(M).
\end{align*}

\item 
\label{lem:omega:b}
The following map is real analytic:
\begin{align*}
H^rC^\omega(M\times U) \times H^r(M,U) \ni (F,h) \mapsto F\circ (\Id_M,h) \in H^r(M).
\end{align*}
\end{enumerate}
\end{lemma}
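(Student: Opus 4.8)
The plan is to work in the convenient calculus of Kriegl--Michor and to test the push-forward map on curves. Recall the two characterizations we will use: a map between open subsets of convenient vector spaces is $C^\infty$ if and only if it carries smooth curves to smooth curves, and it is $C^\om$ if and only if it is $C^\infty$ and in addition carries real analytic curves to real analytic curves. Openness of $H^r(M,U)$ in $H^r(M,\mathbb R)$ is immediate: since $r>\dim(M)/2$ the Sobolev embedding $H^r(M,\mathbb R)\hookrightarrow C^0(M,\mathbb R)$ is continuous, $M$ is compact, and $U$ is open, so the condition $h(M)\subset U$ cuts out a $C^0$-open, hence $H^r$-open, set. The central tool for recognizing smooth curves in the target $H^r(M)$ will be \autoref{lem:curvesSobolev}, applied to the trivial line bundle $E=M\times\mathbb R$, which reduces smoothness of a curve to (i) pointwise smoothness in $t$ for each fixed $x\in M$, and (ii) local boundedness in $H^r(M)$ of each $t$-derivative.

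For part \ref{lem:omega:a} I would fix a smooth curve $t\mapsto(F_t,h_t)$ into $H^rC^\infty(M\times U)\times H^r(M,U)$ and consider $t\mapsto F_t\circ(\Id_M,h_t)$. Condition (i) holds because for fixed $x$ the map $t\mapsto F_t(x,h_t(x))$ is the composition of finite-dimensional smooth data: $t\mapsto h_t(x)$ is smooth, $t\mapsto F_t(x,\cdot)\in C^\infty(U)$ is smooth (post-composing the curve $t\mapsto F_t\in C^\infty(U,H^r(M))$ from \autoref{lem:mixed} with the bounded linear evaluation $\on{ev}_x$), and the evaluation $C^\infty(U)\times U\to\mathbb R$ is smooth by Cartesian closedness. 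For condition (ii) I would differentiate through the composition by a Fa\`a di Bruno expansion: each $\p_t^p\big(F_t\circ(\Id_M,h_t)\big)$ is a finite sum of terms $\big((\p_t^a\p_y^b F_t)\circ(\Id_M,h_t)\big)\prod_i \p_t^{c_i}h_t$, where $\p_y$ denotes the fiber derivative. The factors $\p_t^{c_i}h_t$ are smooth curves in $H^r(M)$, hence locally bounded there by \autoref{lem:curvesSobolev} applied to $h$, while the mixed-smoothness derivatives $\p_t^a\p_y^b F_t$ are again smooth curves in $H^rC^\infty(M\times U)$. The module properties \autoref{thm:module}, with all three Sobolev indices equal to $r$ (admissible since $r>\dim(M)/2$), then assemble these factors into a locally bounded curve in $H^r(M)$, provided the base push-forward $(G,h)\mapsto G\circ(\Id_M,h)$ already lands in $H^r(M)$ and carries bounded sets to bounded sets.

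This base superposition estimate is the main obstacle, and I would isolate it as a separate claim about the Nemytskii map $H^rC^\infty(M\times U)\times H^r(M,U)\to H^r(M)$. To prove it I would localize with the finite atlas and subordinate partition of unity of \autoref{sec:sobolev}, reducing to $G$ with $H^r$ coefficients and $C^\infty$ fiber dependence on a chart domain in $\mathbb R^m$ and $h$ with range in a fixed compact $K\subset U$ (which exists locally in $t$ by the $C^0$-bound above). In the model case where $G$ is a polynomial in the fiber variable the push-forward is a finite sum of products $a_j\cdot h^j$ with $a_j\in H^r(M)$, each controlled in $H^r$ directly by \autoref{thm:module}; the general case is then the classical composition (superposition) theorem in Sobolev spaces, obtained in local charts from the higher-order chain rule and the algebra/module structure of $H^r$, with an interpolation/Littlewood--Paley argument to cover non-integer $r$. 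This is precisely where $r>\dim(M)/2$ and the full $C^\infty$ fiber regularity (arbitrarily high $\p_y$-order appearing in the expansion) are used; together with the previous paragraph it completes \ref{lem:omega:a}.

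For part \ref{lem:omega:b} it remains, by the $C^\om$ characterization and \ref{lem:omega:a}, to show that a real analytic curve $t\mapsto(F_t,h_t)$ is sent to a real analytic curve in $H^r(M)$. Such a curve extends to a holomorphic curve $z\mapsto(F_z,h_z)$ on a complex strip around $\mathbb R$, and the fiber-$C^\om$ dependence of $F$ extends to genuine holomorphic dependence on a complex neighborhood of $U$, exactly via the spaces $\mathcal H(\tilde U)$ used in the proof of \autoref{lem:mixed}. Composing these extensions produces a candidate holomorphic extension $z\mapsto F_z\circ(\Id_M,h_z)$. To certify that this is a holomorphic, hence real analytic, curve into the complexified Hilbert space $H^r(M)$, I would test against the point evaluations $\on{ev}_x\colon H^r(M)\to\mathbb R$, in the spirit of \autoref{thm:FK}: these are continuous by the Sobolev embedding and point-separating, and $H^r(M)$ is reflexive, so the bornology has a basis of $\si(E,\mathcal V)$-closed sets. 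Each scalar curve $z\mapsto\on{ev}_x\big(F_z\circ(\Id_M,h_z)\big)$ is holomorphic by the finite-dimensional argument already used for condition (i), and local boundedness in $H^r$ is furnished by the base estimate applied to the holomorphically extended data; a weak-holomorphy principle then upgrades this to genuine holomorphy of the curve, so its restriction to $\mathbb R$ is real analytic. Combined with \ref{lem:omega:a}, this yields the $C^\om$ assertion \ref{lem:omega:b}.
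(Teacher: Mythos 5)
Your handling of the openness claim and of part \ref{lem:omega:a} matches the paper's: the paper simply defers \ref{lem:omega:a} to the more general \autoref{lem:pushforward}.\ref{lem:pushforward:a}, whose proof is exactly your curve-testing argument via the Fa\`a di Bruno formula, \autoref{thm:FK}, and \autoref{lem:curvesSobolev} (and is, like yours, terse about the underlying Nemytskii bound). For part \ref{lem:omega:b}, however, you take a genuinely different route. The paper first proves the intermediate claim that $C^\om(U)\times H^r(M,U)\ni(f,h)\mapsto f\circ h\in H^r(M)$ is real analytic by observing that, since $H^r(M)$ is a commutative Banach algebra for $r>\dim(M)/2$ and $\si(h)$ equals the range $h(M)\subset U$, the pointwise composition coincides with the Riesz functional calculus $f(h)=\tfrac{-1}{2\pi\mathrm i}\int_\Ga f(\la)(h-\la)^{-1}\,d\la$; real analyticity in $(f,h)$ is then inherited from inversion in a Banach algebra and Bochner integration, with no superposition estimate and no complexification of curves. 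General $F\in H^rC^\om(M\times U)$ is then reached from elementary tensors $a\otimes f$ by multiplying with $a\in H^r(M)$, applying the exponential law, and invoking the universal property of the bornological tensor product together with \autoref{lem:mixed}; that tensor-product step is the clean substitute for your reduction to polynomials in the fiber variable. Your alternative --- testing on real analytic curves, extending holomorphically, and upgrading weak holomorphy plus local boundedness to holomorphy in the Hilbert target --- is viable, but it leans on two things the paper's argument never needs: the quantitative mixed-regularity Nemytskii bound (your self-declared ``main obstacle'', which you settle only by appeal to a classical superposition theorem that does not immediately cover coefficients that are merely $H^r$ in the base variable), and a locally uniform choice of complex neighborhood on which the fiberwise holomorphic extension of $F$ and the holomorphic extension of $h$ remain compatible. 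Both points can be made rigorous, but the functional-calculus shortcut is precisely what lets the paper bypass them.
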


\begin{proof}
The set $\Ga_{H^r}(U)$ is open in $\Ga_{H^r}(E_1)$ because $\Ga_{H^r}(E_1)$ is continuously included in $\Ga_{C}(E_1)$ thanks to the Sobolev embedding theorem.
\begin{enumerate}[wide]
\item follows from the more general statement \autoref{lem:pushforward}.\ref{lem:pushforward:a}.

\item[\textbf{(b')}]
\makeatletter
\protected@edef\@currentlabel{\textbf{(b')}}
\makeatother
\label{lem:omega:b'} 
As an intermediate step, we claim that the following map is real analytic:
\begin{align*}
C^\om(U) \times H^r(M,U) \ni (f,h) \mapsto f\circ h \in H^r(M).
\end{align*}
For any $f \in C^\om(U)$ and $h \in H^r(M,U)$, the composition $f\circ h$ coincides with the Riesz functional calculus $f(h)$, which is defined as follows \cite[Theorem~4.7]{conway2013course}.
As the spectrum $\sigma(h)$ equals the range of $h$, which is a compact subset of $U$, 
there is a set of positively oriented curves $\Gamma=\{\gamma_1,\dots,\gamma_n\}$ in $U \setminus \sigma(h)$ such that $\sigma(h)$ is inside of $\Gamma$, and $\mathbb C\setminus U$ is outside of $\Gamma$ \cite[Proposition~4.4]{conway2013course}. 
Then one defines $f(h)$ as the following Bochner integral over the resolvent of $h$:
\begin{align*}
f(h) 
=
\frac{-1}{2\pi \mathrm{i} }\int_\Gamma f(\la) (h-\la)^{-1} d\la
\end{align*}
For any fixed $\Gamma$, this integral is well-defined and real analytic as claimed.

\item 
The following map is real analytic thanks to \autoref{lem:omega:b'} and the boundedness of multiplication $H^r(M)\times H^r(M)\to H^r(M)$:
\begin{align*}
H^r(M) \times C^\omega(U) \times H^r(M,U) \ni (a,f,h) \mapsto (a\otimes f)\circ (\Id_M,h) \in H^r(M),
\end{align*}
where $(a\otimes f)\circ(\Id_M,h)$ denotes the map $x \mapsto a(x)f(h(x))$.
Equivalently, by the real analytic exponential law \cite[11.18]{KM97}, the following map is real analytic: 
\begin{align*}
H^r(M) \times C^\omega(U) \ni (a,f) \mapsto \big(h \mapsto (a\otimes f)\circ(\Id_M, h)\big) \in C^\om(H^r(M,U),H^r(M)).
\end{align*}
This map is bilinear and real analytic, and therefore bounded. 
By the universal property of the bornological tensor product $\otimes_\beta$ \cite[5.7]{KM97}, it descends to a bounded linear map
\begin{align*}
H^r(M) \otimes_\beta C^\omega(U) \ni F \mapsto \big(h \mapsto F\circ (\Id_M,h)\big) \in C^\om(H^r(M,U),H^r(M)).
\end{align*}
The domain of this map equals $H^rC^\om(M\times U)$ by \autoref{lem:mixed}. 
\qedhere
\end{enumerate}
\end{proof}

\begin{lemma}[Push-forward of sections]
\label{lem:pushforward} 
Let $E_1,E_2$ be vector bundles over $M$, 
let $U\subset E_1$ be an open neighborhood of the image of a smooth section, 
let $F\colon U\to E_2$ be a fiber preserving function,  
and let $r \in (\dim(M)/2,\infty)$. 
Then $\Ga_{H^r}(U)$ is open in $\Ga_{H^r}(E_1)$,
and the following statements hold:
\begin{enumerate}
\item
\label{lem:pushforward:a}
If $F$ is smooth or belongs to $\Ga_{H^r}(C^\infty(U,E_2))$, then the push-forward $F_*$ is smooth: 
\begin{equation*}
F_*\colon\Ga_{H^r}(U) \to \Ga_{H^r}(E_2),\quad h\mapsto  F\circ h.
\end{equation*}

\item
\label{lem:pushforward:b}
If $F$ is real analytic or belongs to $\Ga_{H^r}(C^\om(U,E_2))$, then the pushforward $F_*$ is real analytic.
\end{enumerate}
The notation $\Ga_{H^r}(C^\infty(U,E_2))$ and $\Ga_{H^r}(C^\om(U,E_2))$ is explained in Section~\ref{lem:mixed}.
\end{lemma}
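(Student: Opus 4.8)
The plan is to reduce the vector-bundle statement to the scalar situation of \autoref{lem:omega} by localization, and then to separate the smooth and real analytic cases by the appropriate criteria of convenient calculus. For the openness: since $r>\dim(M)/2$, the Sobolev embedding yields a continuous inclusion $\Ga_{H^r}(E_1)\hookrightarrow \Ga_{C^0}(E_1)$, and as $U$ is open in the total space $E_1$ the sections with image in $U$ form a $C^0$-open, hence $H^r$-open, set, so $\Ga_{H^r}(U)$ is open. I then localize using the finite vector bundle atlas and subordinate partition of unity $(\ph_i)$ from \autoref{sec:sobolev}: in each chart both bundles trivialize, $F$ takes the form $(x,v)\mapsto(x,\tilde F(x,v))$ with $\tilde F$ of regularity $H^rC^\al$ in the sense of \autoref{lem:mixed}, and a section $h$ becomes an $H^r$ map $\tilde h$ into an open $W\subseteq\R^{n_1}$. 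Since $\|F\circ h\|_{\Ga_{H^r}(E_2)}^2$ is a finite sum over $i$ of squared $H^r$-norms of the coordinate representatives of $\ph_i\cdot(F\circ h)$, and multiplication by the smooth cutoff $\ph_i$ is harmless by \autoref{thm:module}, everything reduces to the composition $(\tilde F,\tilde h)\mapsto\tilde F\circ(\Id_M,\tilde h)$, i.e.\ to the analogue of \autoref{lem:omega} with fiber variable ranging in $W\subseteq\R^{n_1}$ rather than in $\R$.

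For the smooth case \ref{lem:pushforward:a} I would test smoothness on curves: the spaces $\Ga_{H^r}$ are Hilbert spaces, hence convenient, so by \autoref{thm:FK} and \autoref{lem:curvesSobolev} a map into $\Ga_{H^r}(E_2)$ is smooth exactly when it carries smooth curves to smooth curves. Given $c\in C^\infty(\R,\Ga_{H^r}(U))$, \autoref{lem:curvesSobolev} presents $c$ as a fibred map $\R\x M\to E_1$, pointwise smooth in $t$, with every $\p_t^p c$ landing in $\Ga_{H^r}$ with locally bounded norm. Then $F\circ c$ is pointwise smooth in $t$, and by the Fa\`a~di~Bruno formula each $\p_t^p(F\circ c)$ is a finite sum of products of fibre-derivatives $(d^j_{\text{fiber}}F)\circ c$ with the curves $\p_t^{k_1}c,\dots,\p_t^{k_j}c$. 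Each factor $(d^j_{\text{fiber}}F)\circ c$ is again the push-forward of a fixed $C^\infty$-fibred map, so it lies in $\Ga_{H^r}$ with locally bounded norm (this zeroth-order mapping property follows from the Moser-type Sobolev composition estimate, valid because $r>\dim(M)/2$), and all the products stay in $\Ga_{H^r}$ since $r>\dim(M)/2$ makes $\Ga_{H^r}$ a module over itself by \autoref{thm:module}. Hence $F\circ c$ satisfies the criterion of \autoref{lem:curvesSobolev} and $F_*$ is smooth; in particular this supplies the deferred scalar case \autoref{lem:omega}.\ref{lem:omega:a}.

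For the real analytic case \ref{lem:pushforward:b} I would follow the final step of the proof of \autoref{lem:omega}: using the factorization $H^rC^\om=H^r(M)\hat\otimes C^\om(W)$ of \autoref{lem:mixed}, the real analytic exponential law \cite[11.18]{KM97}, and boundedness of the underlying bilinear map, real analyticity descends from a fixed real analytic fibre function. The only genuinely new point compared with \autoref{lem:omega} is the higher fibre dimension, i.e.\ the analogue of \autoref{lem:omega}.\ref{lem:omega:b'} for $f\in C^\om(W)$ with $W\subseteq\R^{n_1}$. I would obtain this by replacing the one-variable resolvent integral with a several-variable holomorphic functional calculus: the commuting multiplication operators $\tilde h_1,\dots,\tilde h_{n_1}$ have joint spectrum equal to the compact range $\tilde h(M)\subseteq W$, the function $f$ extends holomorphically to a complex neighbourhood of it, and the resulting Cauchy-type integral represents $f\circ\tilde h$ and depends real analytically on $(f,\tilde h)$.

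The step I expect to be the main obstacle is exactly this several-variable functional calculus. Unlike in one dimension, a compact set $K\subseteq W\subseteq\R^{n_1}$ need not sit inside a polydisk on which the holomorphic extension of $f$ is defined, so the integration contour (a distinguished boundary enclosing $K$) has to be arranged with care, or $K$ covered by small polydisks and the pieces reassembled without destroying holomorphy. Everything else is routine bookkeeping: the localization is standard, and the module properties \autoref{thm:module} together with the curve criteria of \autoref{thm:FK} and \autoref{lem:curvesSobolev} dispatch the smooth case mechanically.
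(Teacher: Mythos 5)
Your proposal is correct in substance and follows the same overall strategy as the paper: openness via the Sobolev embedding into $C^0$; the smooth case by testing on curves with \autoref{thm:FK} and \autoref{lem:curvesSobolev} plus Fa\`a di Bruno (the paper does exactly this, using point evaluations as the separating set of functionals, and likewise derives \autoref{lem:omega}.\ref{lem:omega:a} as a special case); and the real analytic case by reducing to a fixed fibre function via $H^r(M)\hat\otimes C^\om$ and the exponential law. The one structural difference is the reduction: you localize with the finite atlas and partition of unity, cutting off the \emph{output} $F\circ h$ (correctly --- cutting off the input would not commute with composition), whereas the paper instead embeds $E_1,E_2$ globally into a single trivial bundle $M\times V$, handles the inclusion $i_1$ and projection $\pi_2$ by the fibre-wise \emph{linear} case (which is just the module properties \ref{thm:module} applied to $\check F\in\Ga_{H^r}(L(E_1,E_2))$), and writes $F_*=(\pi_2)_*\circ(\tilde F)_*\circ(i_1)_*$; this buys you freedom from restriction/extension bookkeeping on chart domains and is worth adopting. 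Your identification of the several-variable analogue of \autoref{lem:omega}.\ref{lem:omega:b'} as the main obstacle is perceptive: the paper's own proof invokes \autoref{lem:omega}.\ref{lem:omega:b} ``component-wise,'' which decomposes the target but not the $V$-valued fibre variable in the source, so the issue is not actually resolved there either. The standard way to close it is the one you name: $H^r(M)$ is a commutative unital Banach algebra for $r>\dim(M)/2$ whose Gelfand spectrum is $M$, the joint spectrum of the tuple $(h_1,\dots,h_{n_1})$ is its compact joint range inside $W$, and the Arens--Calder\'on/Waelbroeck joint holomorphic functional calculus (Cauchy--Weil integral over a suitable cycle in a complex neighbourhood of the joint spectrum) reproduces $f\circ h$ and depends holomorphically, hence real analytically, on $(f,h)$; your worry about contours is legitimate but is exactly what that classical construction handles, so no new idea is needed, only a citation.
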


\begin{proof} 
\begin{enumerate}[wide]
\item 
Let $c\colon\mathbb R\ni t\mapsto c(t,\cdot)\in \Ga_{H^r}(U)$ be a smooth curve. 
As $r>\dim(M)/2$, it holds for each $x\in M$ that the mapping $\mathbb R\ni t \mapsto F_x(c(t,x))\in (E_2)_x$ is smooth. 
By the Fa\`a di Bruno formula (see \cite{FaadiBruno1855} for the 1-dimensional version, preceded in \cite{Arbogast1800} by 55 years), we have for each $p\in\mathbb N_{>0}$, $t \in \mathbb R$, and $x \in M$ that
\begin{align*}
\p_t^p F_x (c(t,x)) = 
\sum_{j\in\mathbb N_{>0}} \sum_{\substack{\al\in \mathbb N_{>0}^j\\ \al_1+\dots+\al_j =p}}
\frac{1}{j!}d^j (F_x) (c(t,x))\Big(
\frac{\p_t^{(\al_1)}c(t,x)}{\al_1!},\dots,
\frac{\p_t^{(\al_j)}c(t,x)}{\al_j!}\Big)\,.
\end{align*}
For each $x\in M$ and $\al_x\in (E_2)_x^*$ the mapping $s\mapsto \langle s(x),\al_x\rangle$ is a continuous linear functional on the Hilbert space $\Ga_{H^r}(E_2)$. 
The set $\mathcal V_2$ of all of these functionals separates points and therefore satisfies the condition of Theorem~\ref{thm:FK}. We also have for each $p\in\mathbb N_{>0}$, $t \in \mathbb R$, and $x \in M$ that
\begin{align*}
\p_t^p\langle F_x (c(t,x)),\al_x\rangle &= \langle\p_t^p F_x (c(t,x)),\al_x\rangle.
\end{align*}
Using the explicit expressions for $\p_t^p F_x (c(t,x))$ from above we may apply Lemma~\ref{lem:curvesSobolev} to conclude that $t\mapsto F(c(t,\;))$ is a smooth curve $\mathbb R\to \Ga_{H^r}(E_2)$. 
Thus, $F_*$ is a smooth mapping, and we have shown \ref{lem:pushforward:a}.

\item[\textbf{(b')}]
\makeatletter
\protected@edef\@currentlabel{\textbf{(b')}}
\makeatother
\label{lem:pushforward:b'}
We claim that \ref{lem:pushforward:b} holds when $F$ is fiber-wise linear. 
Then $F$ can be identified with a map in $\check F \in \Ga_{H^r}(L(E_1,E_2))$. 
For any $h \in \Gamma_{H^r}(E_1)$, the composition  $F\circ h$ equals the trace $\check F.h$, which is real analytic in $h$ by the module properties \ref{thm:module}.

\item
To prove the general case, we write $E_1$ and $E_2$ as sub-bundles of a trivial bundle $M\times V$. 
The corresponding inclusion and projection mappings are real analytic mappings of vector bundles and are denoted by
\begin{align*}
i_1&\colon E_1 \to M\times V, 
&
i_2&\colon E_2 \to M\times V, 
&
\pi_1&\colon M\times V\to E_1,
&
\pi_2&\colon M\times V\to E_2.
\end{align*} 
Then the set $\tilde U := \pi_1^{-1}(U)\subseteq M\times V$ and the map $\tilde F:=i_2\circ F\circ\pi_1$ fit into the following commutative diagrams:
\begin{equation*}
\xymatrix@C=3em{
U 
\ar[r]^F
\ar@{^(->}[d]^{i_1}
&
E_2
\ar@{<<-}[d]^{\pi_2}
\\
\tilde U
\ar[r]^{\tilde F}
&
M\times V
}
\hspace{4em}
\xymatrix@C=3em{
\Ga_{H^r}(U) 
\ar[r]^{F_*}
\ar@{^(->}[d]^{(i_1)_*}
&
\Ga_{H^r}(E_2)
\ar@{<<-}[d]^{(\pi_2)_*}
\\
\Ga_{H^r}(\tilde U)
\ar[r]^{\tilde F_*}
&
\Ga_{H^r}(M\times V)
}
\end{equation*}
All maps in the diagram on the left are real analytic by definition.
The map $(\tilde F)_*$ is real analytic by \autoref{lem:omega}.\ref{lem:omega:b} applied component-wise to the trivial bundle $M\times V$, 
and the maps $(i_1)_*$ and $(\pi_2)_*$ are real analytic by \ref{lem:pushforward:b'}.
Therefore, $F_*=(\pi_2)_*\circ (\tilde F)_*\circ (i_1)_*$ is real analytic, which proves \ref{lem:pushforward:b}.
\qedhere
\end{enumerate}
\end{proof}

\section{A real analytic no-loss no-gain result}

The following lemma is a variant of the no-loss-no-gain theorem of Ebin and Marsden \cite{EM1970}, adapted to the real analytic sprays on spaces of immersions as in the setting of \autoref{thm:wellposed}.
The proof is a minor adaptation of the proof in \cite{EM1970}; see also \cite{bruveris2017regularity}.

\begin{lemma}[Real analytic no-loss no-gain]
\label{lem:nolossnogain}
Let $r_0>\dim(M)/2+1$ and let $\al\in\{\infty,\om\}$.
For each $r\ge r_0$, 
let $S^r$ be a $\Diff(M)$-invariant $C^\al$ vector field on $T\Imm^r(M,N)$ such that  
$Ti_{r,s} \circ S^r = S^s \circ i_{r,s}$ where $i_{r,s}\colon T\Imm^r(M,N)\to T\Imm^s(M,N)$
is the $C^\al$-embedding for $r_0\le s<r$.
By the theorem of Picard-Lindel\"of each $S^r$ has a maximal $C^\al$-flow 
$\on{Fl}^{S^r}\colon U^r \to T\Imm^r(M,N)$ for an open neighborhood $U^r$ of $\{0\}\x T\Imm^r(M,N)$ in 
$\mathbb R\x T\Imm^r(M,N)$.

Then $U^r = U^s\cap( \mathbb R\x T\Imm^r(M,N))$ for all $r_0+1 \le r$ and $r_0\le s\leq r$. 
Thus, there is no loss or gain in regularity during the evolution along any $S^r$ for $r\ge r_0+1$.
\end{lemma}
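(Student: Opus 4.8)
The plan is to prove the asserted equality by establishing the two inclusions
$U^r\subseteq U^s\cap(\R\x T\Imm^r(M,N))$ and $U^s\cap(\R\x T\Imm^r(M,N))\subseteq U^r$ separately. The first inclusion (\emph{no loss}) is the easy direction: by the intertwining relation $Ti_{r,s}\circ S^r=S^s\circ i_{r,s}$, the image under $i_{r,s}$ of any $S^r$-integral curve is an $S^s$-integral curve, so by uniqueness of integral curves it coincides with the corresponding $S^s$-flow line. Hence an $H^r$-solution persists at least as long at level $s$, and the two flows agree wherever both are defined; in particular the flows $\Fl^{S^\ell}$ for $\ell\ge r_0$ mutually restrict to one another.

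The second inclusion (\emph{no gain}) is the substance of the lemma. Fixing $(t,v)\in U^s$ with $v\in T\Imm^r(M,N)$, the first part lets me reduce to $s=r_0$, and the task becomes to show that the integral curve $c(\tau):=\Fl^{S^{r_0}}_\tau(v)$, which exists in $T\Imm^{r_0}(M,N)$ for $\tau\in[0,t]$, in fact takes values in $T\Imm^r(M,N)$. I would bootstrap the regularity of $c(\tau)$ upward using the $\Diff(M)$-equivariance. For a smooth vector field $X\in\X(M)$ with flow $\Fl^X_u$, the $\Diff(M)$-invariance of $S^\ell$ makes its flow commute with the right action $R_\varphi\colon f\mapsto f\circ\varphi$, so that
\begin{equation*}
\Fl^{S^\ell}_\tau(v\circ\Fl^X_u)=c(\tau)\circ\Fl^X_u.
\end{equation*}
Whenever $v$ has one order of Sobolev regularity to spare over the level $\ell$ at which we flow (that is, $\ell\le r-1$), the curve $u\mapsto v\circ\Fl^X_u$ is $C^1$ into $T\Imm^\ell(M,N)$ with derivative $dv(X)$ at $u=0$; since $\Fl^{S^\ell}_\tau$ is $C^\al$, the chain rule yields $dc(\tau)(X)=T\Fl^{S^\ell}_\tau\big(dv(X)\big)\in H^\ell$. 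Letting $X$ range over finitely many smooth vector fields spanning $TM$ (as in the atlas of \autoref{sec:sobolev}) and invoking the standard fact that a section which lies in $H^\ell$ and whose first derivatives along smooth vector fields lie in $H^\ell$ in fact lies in $H^{\ell+1}$, I conclude that $c(\tau)$ gains one order of regularity.

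Iterating this gain in unit steps raises the regularity of $c(\tau)$ from $r_0$ through $r_0+1,r_0+2,\dots$; at each stage the improved curve is, by uniqueness, the flow line at the new level, so the hypotheses of the next step remain in force. The step I expect to be the main obstacle is the final, possibly fractional, increment up to $H^r$: once $\ell$ enters the band $(r-1,r)$, a further full gain is unavailable, because the composition $\varphi\mapsto v\circ\varphi$ loses exactly one derivative and $v$ then has no regularity to spare over $\ell$. This is resolved precisely by the hypothesis $r\ge r_0+1$, which forces $r-1\ge r_0$: by the first part $(t,v)\in U^\ell\subseteq U^{r-1}$, so I may instead flow at level $r-1$, over which $v\in H^r$ again has one derivative to spare. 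Differentiating the same equivariance relation at level $r-1$ gives $dc(\tau)(X)\in H^{r-1}$ for all smooth $X$, hence $c(\tau)\in H^r$ for all $\tau\in[0,t]$, which shows $(t,v)\in U^r$ and completes the second inclusion. The recurring analytic input is the $C^1$-differentiability of $u\mapsto v\circ\Fl^X_u$ into the appropriate Sobolev space, which rests on this one-derivative gap and can be justified by the composition estimates underlying \autoref{lem:pushforward}.
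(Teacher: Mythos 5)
Your proposal is correct and follows essentially the same route as the paper's proof: the no-loss direction comes from the intertwining of the flows, the one-derivative gain comes from differentiating the $\Diff(M)$-equivariance relation $\on{Fl}^{S^\ell}_\tau(h\circ\on{Fl}^X_u)=\on{Fl}^{S^\ell}_\tau(h)\circ\on{Fl}^X_u$ in $u$ and invoking the Ebin--Marsden regularity criterion (their Lemma~12.2, quoted as step one of the paper's proof), and the final fractional increment is handled exactly as in the paper by flowing at level $r-1\ge r_0$. There are no substantive differences in strategy or in the level of rigor.
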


\begin{proof} 
\begin{enumerate}[wide]
\item\label{lem:nolossnogain:result}
We shall use the following result \cite[Lemma~12.2]{EM1970}:
\emph{Any $h\in H^r(M,TN)$ such that $Th\circ X\in H^r(M,TTN)$ for all $X\in \X(M)$ satisfies $h\in H^{r+1}(M,TN)$.}

\item\label{lem:nolossnogain:J}
For $h\in T\Imm^r(M,N)$ let $J^r_h$ be the open interval such that $U^r\cap (\mathbb R\x \{h\}) = J^r_h\x \{h\}$, i.e., $J^r_h$ is the maximal domain of the integral curve of $S^r$ through $h$ in $T\Imm^r(M,N)$; see \cite[32.14]{KM97}. Since $i_{r,s}\circ \on{Fl}^{S^r}_t = \on{Fl}^{S^s}_t\circ$ 
(see \cite[32.16]{KM97}), for $h\in T\Imm^r(M,N)$ we have $J^r_h\subseteq J^s_h$ for $r_0\le s<r$.

\item\label{lem:nolossnogain:claim}
{\bf Claim.} \emph{For $h\in T\Imm^{r+1}(M,N)$ we have $J^r_h = J^{r+1}_h$.}
\\
Since $S^r$ is invariant under the pullback action of $\Diff(M)$, we have for $h\in T\Imm^{r+1}(M,N)$ and any $X\in \X(M)$ that
$$
\on{Fl}^{S^r}_t(h\circ \on{Fl}^X_u) = \on{Fl}^{S^r}_t(h) \circ \on{Fl}^X_u\,.
$$ 
Differentiating both side we get 
\begin{align*}
T(\on{Fl}^{S^r}_t(h))\circ X &= \p_u|_0 ( \on{Fl}^{S^r}_t(h) \circ \on{Fl}^X_u) =
\p_u|_0 (\on{Fl}^{S^r}_t(h\circ \on{Fl}^X_u)) 
\\
&=  T(\on{Fl}^{S^r}_t)(\p_u|_0(h\circ \on{Fl}^X_u))
= T(\on{Fl}^{S^r}_t)(Th\circ X)
\end{align*}
Since $Th\circ X \in H^r(M,TTN)$ we see that $T(\on{Fl}^{S^r}_t(h))\circ X  \in H^r(M,TTN)$. By result \ref{lem:nolossnogain:result} we get
$\on{Fl}^{S^r}_t(h)\in T\Imm^{r+1}(M,N)$, and thus $J^{r}_h\supseteq J^{r+1}_h$. The converse inclusion is \ref{lem:nolossnogain:J}.

\item
Let $r_0+1\le s < r<s+1$ and let $h\in T\Imm^{r}(M,N)$. Then 
$$J^r_h \subseteq J^s_h\subseteq J^{r-1}_h= J^r_h,$$ 
where the inclusions follow from \ref{lem:nolossnogain:J}, \ref{lem:nolossnogain:J}, and \ref{lem:nolossnogain:claim}, respectively.
Thus we have $J^r_h=J^s_h=J^{r-1}_h$.
\qedhere 
\end{enumerate}
\end{proof}

\bibliographystyle{abbrv}

\begin{thebibliography}{10}

\bibitem{Arbogast1800}
L.~F.~A. Arbogast.
\newblock {\em Du calcul des d\'erivations}.
\newblock Levrault, Strasbourg, 1800.

\bibitem{Ar1966}
V.~I. Arnold.
\newblock Sur la g\'eom\'etrie diff\'erentielle des groupes de {L}ie de
  dimension infinie et ses applications \`a l'hydrodynamique des fluides
  parfaits.
\newblock {\em Ann. Inst. Fourier (Grenoble)}, 16(fasc. 1):319--361, 1966.

\bibitem{BBCEK2019}
M.~Bauer, M.~Bruveris, E.~Cismas, J.~Escher, and B.~Kolev.
\newblock Well-posedness of the {EPDiff} equation with a pseudo-differential
  inertia operator.
\newblock To appear in Journal of Differential Equations, 2020.

\bibitem{bauer2012vanishing}
M.~Bauer, M.~Bruveris, P.~Harms, and P.~W. Michor.
\newblock Vanishing geodesic distance for the {R}iemannian metric with geodesic
  equation the {K}d{V}-equation.
\newblock {\em Annals of Global Analysis and Geometry}, 41(4):461--472, 2012.

\bibitem{bauer2013geodesic}
M.~Bauer, M.~Bruveris, P.~Harms, and P.~W. Michor.
\newblock Geodesic distance for right invariant {S}obolev metrics of fractional
  order on the diffeomorphism group.
\newblock {\em Annals of Global Analysis and Geometry}, 44(1):5--21, 2013.

\bibitem{bauer2018smooth}
M.~Bauer, M.~Bruveris, P.~Harms, and P.~W. Michor.
\newblock Smooth perturbations of the functional calculus and applications to
  {R}iemannian geometry on spaces of metrics.
\newblock {\em {\normalfont\textrm{arXiv:}\texttt{1810.03169}}}, 2018.

\bibitem{bauer2017numerical}
M.~Bauer, M.~Bruveris, P.~Harms, and J.~M{\o}ller-Andersen.
\newblock A numerical framework for {Sobolev} metrics on the space of curves.
\newblock {\em SIAM Journal on Imaging Sciences}, 10(1):47--73, 2017.

\bibitem{bauer2018fractional}
M.~Bauer, M.~Bruveris, and B.~Kolev.
\newblock Fractional {Sobolev} metrics on spaces of immersed curves.
\newblock {\em Calculus of Variations and Partial Differential Equations},
  57(1):27, 2018.

\bibitem{bauer2014overview}
M.~Bauer, M.~Bruveris, and P.~W. Michor.
\newblock Overview of the geometries of shape spaces and diffeomorphism groups.
\newblock {\em Journal of Mathematical Imaging and Vision}, 50(1-2):60--97,
  2014.

\bibitem{bauer2015local}
M.~Bauer, J.~Escher, and B.~Kolev.
\newblock Local and global well-posedness of the fractional order {EPD}iff
  equation on {$\mathbb R^d$}.
\newblock {\em Journal of Differential Equations}, 258(6):2010--2053, 2015.

\bibitem{Bauer2011b}
M.~Bauer, P.~Harms, and P.~W. Michor.
\newblock Sobolev metrics on shape space of surfaces.
\newblock {\em J. Geom. Mech.}, 3(4):389--438, 2011.

\bibitem{bauer2012sobolev}
M.~Bauer, P.~Harms, and P.~W. Michor.
\newblock Sobolev metrics on shape space, {II}: weighted {S}obolev metrics and
  almost local metrics.
\newblock {\em Journal of Geometric Mechanics}, 4(4), 2012.

\bibitem{bauer2018vanishing}
M.~Bauer, P.~Harms, and S.~C. Preston.
\newblock Vanishing distance phenomena and the geometric approach to {SQG}.
\newblock {\em Archive for Rational Mechanics and Analysis}, 2019.

\bibitem{bauer2016geometric}
M.~Bauer, B.~Kolev, and S.~C. Preston.
\newblock Geometric investigations of a vorticity model equation.
\newblock {\em Journal of Differential Equations}, 260(1):478--516, 2016.

\bibitem{behzadan2017certain}
A.~Behzadan and M.~Holst.
\newblock On certain geometric operators between {S}obolev spaces of sections
  of tensor bundles on compact manifolds equipped with rough metrics.
\newblock {\em {\normalfont\textrm{arXiv:}\texttt{1704.07930}}}, 2017.

\bibitem{bruveris2017regularity}
M.~Bruveris.
\newblock Regularity of maps between {Sobolev} spaces.
\newblock {\em Annals of Global Analysis and Geometry}, 52(1):11--24, 2017.

\bibitem{camassa1993integrable}
R.~Camassa and D.~D. Holm.
\newblock An integrable shallow water equation with peaked solitons.
\newblock {\em Physical Review Letters}, 71(11):1661, 1993.

\bibitem{celledoni2016shape}
E.~Celledoni, S.~Eidnes, and A.~Schmeding.
\newblock Shape analysis on homogeneous spaces: a generalised {SRVT} framework.
\newblock In {\em The Abel Symposium}, pages 187--220. Springer, 2016.

\bibitem{constantin1985simple}
P.~Constantin, P.~D. Lax, and A.~Majda.
\newblock A simple one-dimensional model for the three-dimensional vorticity
  equation.
\newblock {\em Communications on pure and applied mathematics}, 38(6):715--724,
  1985.

\bibitem{constantin1994formation}
P.~Constantin, A.~J. Majda, and E.~Tabak.
\newblock Formation of strong fronts in the {2-D} quasigeostrophic thermal
  active scalar.
\newblock {\em Nonlinearity}, 7(6):1495, 1994.

\bibitem{conway2013course}
J.~B. Conway.
\newblock {\em A course in functional analysis}, volume~96.
\newblock Springer Science \& Business Media, 2013.

\bibitem{defant1992tensor}
A.~Defant and K.~Floret.
\newblock {\em Tensor norms and operator ideals}, volume 176.
\newblock Elsevier, 1992.

\bibitem{EM1970}
D.~G. Ebin and J.~E. Marsden.
\newblock {G}roups of diffeomorphisms and the motion of an incompressible
  fluid.
\newblock {\em Ann. of Math.}, 92:102--163, 1970.

\bibitem{escher2014right}
J.~Escher and B.~Kolev.
\newblock Right-invariant {S}obolev metrics of fractional order on the
  diffeomorphism group of the circle.
\newblock {\em Journal of Geometric Mechanics}, 6(3):335--372, 2014.

\bibitem{FaadiBruno1855}
C.~F. Fa\`a~di Bruno.
\newblock Note {s}ur {u}ne {n}ouvelle {f}ormule {d}u {c}alcul
  {d}iff\'erentielle.
\newblock {\em Quart. J. Math.}, 1:359--360, 1855.

\bibitem{FK88}
A.~Fr{\"o}licher and A.~Kriegl.
\newblock {\em Linear spaces and differentiation theory}.
\newblock Pure and Applied Mathematics (New York). John Wiley \& Sons Ltd.,
  Chichester, 1988.

\bibitem{grenander1998computational}
U.~Grenander and M.~I. Miller.
\newblock Computational anatomy: An emerging discipline.
\newblock {\em Quarterly of applied mathematics}, 56(4):617--694, 1998.

\bibitem{grosse2013sobolev}
N.~Gro{\ss}e and C.~Schneider.
\newblock Sobolev spaces on {R}iemannian manifolds with bounded geometry:
  {G}eneral coordinates and traces.
\newblock {\em Mathematische Nachrichten}, 286(16):1586--1613, 2013.

\bibitem{HoMa2005}
D.~D. Holm and J.~E. Marsden.
\newblock Momentum maps and measure-valued solutions (peakons, filaments, and
  sheets) for the {EPD}iff equation.
\newblock In {\em The breadth of symplectic and {P}oisson geometry}, volume 232
  of {\em Progr. Math.}, pages 203--235. Birkh\"auser Boston, Boston, MA, 2005.

\bibitem{hunter1991dynamics}
J.~K. Hunter and R.~Saxton.
\newblock Dynamics of director fields.
\newblock {\em SIAM Journal on Applied Mathematics}, 51(6):1498--1521, 1991.

\bibitem{inci2013regularity}
H.~Inci, T.~Kappeler, and P.~Topalov.
\newblock On the regularity of the composition of diffeomorphisms.
\newblock {\em Mem. Amer. Math. Soc.}, 226(1062):vi+60, 2013.

\bibitem{jarchow2012locally}
H.~Jarchow.
\newblock {\em Locally convex spaces}.
\newblock Springer Science \& Business Media, 2012.

\bibitem{jermyn2017}
I.~Jermyn, S.~Kurtek, H.~Laga, and A.~Srivastava.
\newblock Elastic shape analysis of three-dimensional objects.
\newblock {\em Synthesis Lectures on Computer Vision}, 7:1--185, 09 2017.

\bibitem{jerrard2019vanishing}
R.~L. Jerrard and C.~Maor.
\newblock Vanishing geodesic distance for right-invariant {S}obolev metrics on
  diffeomorphism groups.
\newblock {\em Annals of Global Analysis and Geometry}, 55(4):631--656, 2019.

\bibitem{khesin2008geometry}
B.~Khesin and R.~Wendt.
\newblock {\em The geometry of infinite-dimensional groups}, volume~51.
\newblock Springer Science \& Business Media, 2008.

\bibitem{klassen2004analysis}
E.~Klassen, A.~Srivastava, M.~Mio, and S.~H. Joshi.
\newblock Analysis of planar shapes using geodesic paths on shape spaces.
\newblock {\em IEEE transactions on pattern analysis and machine intelligence},
  26(3):372--383, 2004.

\bibitem{KMS93}
I.~Kol{\'a}{\v{r}}, P.~W. Michor, and J.~Slov{\'a}k.
\newblock {\em Natural operations in differential geometry}.
\newblock Springer-Verlag, Berlin, 1993.

\bibitem{Kol2017}
B.~Kolev.
\newblock Local well-posedness of the {EPD}iff equation: a survey.
\newblock {\em J. Geom. Mech.}, 9(2):167--189, 2017.

\bibitem{kouranbaeva1999camassa}
S.~Kouranbaeva.
\newblock The {C}amassa--{H}olm equation as a geodesic flow on the
  diffeomorphism group.
\newblock {\em Journal of Mathematical Physics}, 40(2):857--868, 1999.

\bibitem{KM96}
A.~Kriegl and P.~W. Michor.
\newblock Product preserving functors of infinite-dimensional manifolds.
\newblock {\em Arch. Math. (Brno)}, 32(4):289--306, 1996.

\bibitem{KM97}
A.~Kriegl and P.~W. Michor.
\newblock {\em The convenient setting of global analysis}, volume~53 of {\em
  Mathematical Surveys and Monographs}.
\newblock American Mathematical Society, Providence, RI, 1997.

\bibitem{le2017computing}
A.~Le~Brigant.
\newblock Computing distances and geodesics between manifold-valued curves in
  the {SRV} framework.
\newblock {\em Journal of Geometric Mechanics}, 9(2):131--156, 2017.

\bibitem{lenells2007hunter}
J.~Lenells.
\newblock The {H}unter--{S}axton equation describes the geodesic flow on a
  sphere.
\newblock {\em Journal of Geometry and Physics}, 57(10):2049--2064, 2007.

\bibitem{marsden1984semidirect}
J.~E. Marsden, T.~Ratiu, and A.~Weinstein.
\newblock Semidirect products and reduction in mechanics.
\newblock {\em Transactions of the american mathematical society},
  281(1):147--177, 1984.

\bibitem{Michor08}
P.~W. Michor.
\newblock {\em Topics in differential geometry}, volume~93 of {\em Graduate
  Studies in Mathematics}.
\newblock American Mathematical Society, Providence, RI, 2008.

\bibitem{Michor20}
P.~W. Michor.
\newblock Manifolds of mappings for continuum mechanics.
\newblock In {\em Geometric {C}ontinuum {M}echanics -- an {O}verview}, pages
  1-- 60. Birkhauser, 2020.

\bibitem{michor2007overview}
P.~W. Michor and D.~Mumford.
\newblock An overview of the {R}iemannian metrics on spaces of curves using the
  {H}amiltonian approach.
\newblock {\em Appl. Comput. Harmon. Anal.}, 23(1):74--113, 2007.

\bibitem{misiolek1998shallow}
G.~Misio{\l}ek.
\newblock A shallow water equation as a geodesic flow on the {B}ott-{V}irasoro
  group.
\newblock {\em Journal of Geometry and Physics}, 24(3):203--208, 1998.

\bibitem{misiolek2010fredholm}
G.~Misio{\l}ek and S.~C. Preston.
\newblock Fredholm properties of {R}iemannian exponential maps on
  diffeomorphism groups.
\newblock {\em Inventiones mathematicae}, 179(1):191, 2010.

\bibitem{Mueller2017}
O.~M\"uller.
\newblock Applying the index theorem to non-smooth operators.
\newblock {\em Journal of Geometry and Physics}, 116:140--145, 2017.

\bibitem{ovsienko1987korteweg}
V.~Y. Ovsienko and B.~A. Khesin.
\newblock {K}orteweg--de {V}ries superequation as an {E}uler equation.
\newblock {\em Functional Analysis and Its Applications}, 21(4):329--331, 1987.

\bibitem{shnirel1987geometry}
A.~I. Shnirel'man.
\newblock On the geometry of the group of diffeomorphisms and the dynamics of
  an ideal incompressible fluid.
\newblock {\em Mathematics of the USSR-Sbornik}, 56(1):79, 1987.

\bibitem{srivastava-klassen-book:2016}
A.~Srivastava and E.~Klassen.
\newblock {\em Functional and Shape Data Analysis}.
\newblock Springer Series in Statistics, 2016.

\bibitem{su2014statistical}
J.~Su, S.~Kurtek, E.~Klassen, A.~Srivastava, et~al.
\newblock Statistical analysis of trajectories on {R}iemannian manifolds: bird
  migration, hurricane tracking and video surveillance.
\newblock {\em The Annals of Applied Statistics}, 8(1):530--552, 2014.

\bibitem{su2018comparing}
Z.~Su, E.~Klassen, and M.~Bauer.
\newblock Comparing curves in homogeneous spaces.
\newblock {\em Differential Geometry and its Applications}, 60:9--32, 2018.

\bibitem{sundaramoorthi2011new}
G.~Sundaramoorthi, A.~Mennucci, S.~Soatto, and A.~Yezzi.
\newblock A new geometric metric in the space of curves, and applications to
  tracking deforming objects by prediction and filtering.
\newblock {\em SIAM Journal on Imaging Sciences}, 4(1):109--145, 2011.

\bibitem{treves1967topological}
F.~Treves.
\newblock {\em Topological Vector Spaces, Distributions and Kernels: Pure and
  Applied Mathematics}.
\newblock Academic Press, 1967.

\bibitem{triebel1992theory2}
H.~Triebel.
\newblock {\em Theory of function spaces {II}}, volume~84 of {\em Monographs in
  Mathematics}.
\newblock Birkh\"auser, 1992.

\bibitem{vishik1978analogs}
S.~Vishik and F.~Dolzhanskii.
\newblock Analogs of the {E}uler--{L}agrange equations and magnetohydrodynamics
  equations related to {L}ie groups.
\newblock In {\em Sov. Math. Doklady}, volume~19, pages 149--153, 1978.

\bibitem{Was2016}
P.~Washabaugh.
\newblock The {SQG} equation as a geodesic equation.
\newblock {\em Archive for Rational Mechanics and Analysis}, 222(3):1269--1284,
  2016.

\bibitem{wunsch2010geodesic}
M.~Wunsch.
\newblock On the geodesic flow on the group of diffeomorphisms of the circle
  with a fractional {S}obolev right-invariant metric.
\newblock {\em Journal of Nonlinear Mathematical Physics}, 17(1):7--11, 2010.

\bibitem{younes1998computable}
L.~Younes.
\newblock Computable elastic distances between shapes.
\newblock {\em SIAM Journal on Applied Mathematics}, 58(2):565--586, 1998.

\bibitem{younes2010shapes}
L.~Younes.
\newblock {\em Shapes and diffeomorphisms}, volume 171.
\newblock Springer, 2010.

\end{thebibliography}

\end{document}